\documentclass[11pt]{amsart}
\usepackage[top=1.2in, bottom=1.5in, left=1.5in, right=1.5in]{geometry}

\usepackage{amsfonts, amssymb, amsmath, enumerate}

\numberwithin{equation}{section}


\def \N {\mathbb{N}}

\def \R {\mathbb{R}}

\def \Z {\mathbb{Z}}

\def \vol {{\rm vol}}
\def \Id {{\rm Id}}

\def \etc {,\ldots,}


\newtheorem{theorem}{Theorem}[section]
\newtheorem{proposition}[theorem]{Proposition}

\newtheorem{lemma}[theorem]{Lemma}

\theoremstyle{remark}


\begin{document}

\title[Non-central sections of the regular $n$-simplex]{Non-central sections of the regular $n$-simplex}

\author{Hermann K\"onig}
\address[Hermann K\"onig]{Mathematisches Seminar\\
Universit\"at Kiel\\
24098 Kiel, Germany
}
\email{hkoenig@math.uni-kiel.de}

\keywords{Volume, non-central section, simplex}
\subjclass[2000]{Primary: 52A38, 52A40. Secondary: 52A20}

\begin{abstract}
We show that the maximal non-central hyperplane sections of the regular $n$-simplex of side-length $\sqrt 2$ at a fixed distance $t$ to the centroid are those parallel to a face of the simplex, if $\sqrt{\frac {n-2} {3 (n+1)}} < t \le  \sqrt{\frac {n-1} {2 (n+1)}}$ and $n \ge 5$. For $n=4$, the same is true in a slightly smaller range for $t$. This adds to a previous result for $\sqrt{\frac {n-1} {2 (n+1)}} < t \le  \sqrt{\frac n {n+1}}$. For $n=2, 3$, we determine the maximal and the minimal sections for all distances $t$ to the centroid.
\end{abstract}

\maketitle

{\it Dedicated to the memory of Nicole Tomczak-Jaegermann, friend and coauthor}

\section{Introduction}

The Busemann-Petty problem and the hyperplane conjecture of Bourgain initiated an investigation of extremal hyperplane sections of convex bodies. This became a very active area in convex geometry and geometric tomography. \\

K. Ball \cite {B} found the maximal hyperplane section of the $n$-cube in a celebrated paper. The minimal ones had been determined by Hadwiger \cite {Ha} and Hensley \cite {He}. Meyer and Pajor \cite {MP} settled the maximal section problem for the $l_p^n$-balls, if $0 < p <2$, and the minimal one for the $l_1^n$-ball, which was extended by Koldobsky \cite {K} to $l_p^n$-balls for $0 < p < 2$. Important progress in the case $2 < p < \infty$ was made recently by Eskenazis, Nayar and Tkocz \cite {ENT}. Webb \cite {W} found the maximal central sections of the regular $n$-simplex. The paper by Nayar and Tkocz \cite {NT} gives an excellent survey on extremal sections of classical convex bodies. \\

For non-central sections at distance $t$ to the centroid, not too many results are known. Moody, Stone, Zach and Zvavitch \cite {MSZZ} proved that the maximal sections of the $n$-cube at very large distances from the origin are those perpendicular to the main diagonal. Pournin \cite {P} showed the same result for slightly smaller distances. Liu and Tkocz \cite {LT} proved a corresponding result for the $l_1^n$-ball, if the distance $t$ to the origin satisfies $\frac 1 {\sqrt 2} < t \le 1$. K\"onig \cite {Ko2} extended this to $\frac 1 {\sqrt 3} < t \le \frac 1 {\sqrt 2}$. Maximal non-central sections of the regular $n$-simplex of side-length $\sqrt 2$ were studied by K\"onig \cite{Ko} for relatively large distances $t$ to the centroid. In this paper, we extend this result to smaller distances $t$ for $n \ge 4$, resulting in the range $\sqrt{\frac {n-2} {3 (n+1)}} < t \le  \sqrt{\frac n {n+1}}$. Note that for $t > \sqrt{\frac n {n+1}}$, the intersection is empty. The maximal sections are those parallel to faces, i.e. those perpendicular to the diagonals from the centroid to the vertices of the simplex, in dimension $n=4$ in a slightly smaller range for $t$. The proof uses a volume formula of Dirksen \cite {D} which extended a result of Webb \cite {W}. This formula involves alternating terms with removable singularities, if
$\sqrt{\frac {n-2} {3 (n+1)}} < t \le  \sqrt{\frac {n-1} {2 (n+1)}}$, resulting in difficulties to find extrema of section volumes. \\

For small dimensions $n=2, 3$, we determine the maximal and the minimal hyperplane sections for all possible distances $|t| \le  \sqrt{\frac n {n+1}}$. \\

To formulate our results precisely, we introduce some definitions and notations. Let $\N := \{ \ n \in \Z \ | \ n \ge 1 \ \}$ denote the positive integers. For the regular $n$-simplex $K= \Delta^n$ we use its representation in $\R^{n+1}$
$$\Delta^n := \{ x \in \R^{n+1} \ | \ x = (x_j)_{j=1}^{n+1} , \ x_j \ge 0 , \ \sum_{j=1}^{n+1} x_j =1 \} . $$
Let $a \in S^n := \{ \ x = (x_j)_{j=1}^{n+1} \in \R^{n+1} \ | \  \sum_{j=1}^{n+1} x_j^2 =1 \ \}$ be a direction vector and $t \in \R$. We will always assume that $\sum_{j=1}^{n+1} a_j = 0$ so that the centroid $c = \frac 1 {n+1} (1, \cdots , 1) \in \Delta^n$ is in the hyperplane $a^\perp$. By
$H(a,t) := \{ x \in \Delta^n \ | \ \langle a, x \rangle = t \}  =  \{ t a\} + a^\perp $ we denote the hyperplane orthogonal to $a$ at distance $t$ to the centroid. Thus $c \in H_0(a)$. Then
$$A(a,t) := \vol_{n-1}(H(a,t) \cap \Delta^n) = \vol_{n-1}(\{ x \in \Delta^n \ | \ \langle a, x \rangle = t \}) $$
is the {\it parallel section function} of $\Delta^n$. We aim to find the extrema of $A( \cdot,t)$ on $S^n$ for fixed $t$. For $t \ne 0$ we have non-central sections. \\

Note that $\sqrt{ \frac n {n+1} }$ is the distance of the vertices $e_j=(0 \etc 0, \underbrace{1}_j , 0 \etc 0)$ of $\Delta^n$ to the centroid $c$; it is the maximum coordinates of $a \in S^n$ with $\sum_{j=1}^{n+1} a_j = 0$ can attain, and in this case all other coordinates are equal to $-\frac 1 {\sqrt{n(n+1)}}$. The side-length of the simplex $\Delta^n$ is $\sqrt 2$, its height $\sqrt {\frac{n+1} n}$ and its volume $\frac{\sqrt{n+1}}{n!}$. The distance of the centroid to the midpoint of edges of $\Delta^n$ is $\sqrt{\frac {n-1} {2 (n+1)}}$. By symmetry, we may assume that $a = (a_j)_{j=1}^{n+1} \in S^n$ satisfies $a_1 \ge a_2 \cdots \ge a_{n+1}$. Throughout this paper, we make the following \\

{\bf General Assumption.} A direction vector $a = (a_j)_{j=1}^{n+1} \in \R^n$ is always assumed to satisfy $\sum_{j=1}^{n+1} a_j^2 = 1$,
$\sum_{j=1}^{n+1} a_j = 0$ and $a_1 \ge a_2 \cdots  \ge a_{n+1}$. \\

Note that under this assumption always $a_{n+1} < 0$. We introduce the sets $H_+(a,t) = \{ \ x \in \Delta^n \ | \ \langle a, x \rangle > t \ \}$ and $H_-(a,t) = \{ \ x \in \Delta^n \ | \ \langle a, x \rangle < t \ \}$. For $\sqrt{\frac {n-1} {2 (n+1)}} < t < \sqrt {\frac n {n+1}}$ the set $H_+(a,t)$ contains just one vertex, namely $e_1$. Further, $\sqrt{\frac {n-k+1} {k (n+1)}}$ is the distance from the centroid of $\Delta^n$ to the centroid of a $(k-1)$-boundary simplex. For $\sqrt{\frac {n-k} {(k+1) (n+1)}} < t < \sqrt {\frac {n-k+1} {k (n+1)}}$ the set $H_+(a,t)$ contains at most $k$ vertices of $\Delta^n$. This follows from Lemma \ref{lem1} below. If $a_1 < t$ and $x \in \Delta^n$, $t > \sum_{j=1}^{n+1} a_j x_j = \langle a, x \rangle$, hence $H(a,t) = \emptyset$. If $a_1 = t$ and $x \in \Delta^n$, $t >  \sum_{j=1}^{n+1} a_j x_j = \langle a, x \rangle$ as well, unless $x = e_1$. Then $H(a,t) = \{e_1\}$ and $A(a,t) = 0$. Therefore, if $A(a,t) > 0$, the General Assumption implies that $a_1 > t$. \\

The unit vectors in the direction from the centroid of $\Delta^n$ to the centroid $\frac{e_1 + \cdots + e_k} k$ of a (particular) boundary $(k-1)$-simplex are
$$a^{(k)} := \sqrt{\frac {k(n-k+1)} {n+1}} (\underbrace{ \frac 1 k \etc \frac 1 k}_k, \underbrace{-\frac 1 {n-k+1} \etc -\frac 1 {n-k+1} }_{n-k+1} )  \in S^n \ ,$$
$k = 1, \cdots , n$. Thus $a^{(1)}$ points from the centroid of $\Delta^n$ to a vertex, $a^{(2)}$ to the midpoint of an edge and $a^{(3)}$ to the center of a boundary triangle. A hyperplane orthogonal to the "main diagonal" $a^{(1)}$ is parallel to a boundary $(n-1)$-face of $\Delta^n$. Note that $a^{(n-k+1)}$ is essentially equal to $-a^{(k)}$, up to a permutation of coordinates. Since $H_t(a) = H_{-t}(-a)$, we have
\begin{equation}\label{eq1.1}
A(a,t) = A(-a,-t) \quad , \quad A(a^{(k)},t) = A(a^{(n-k+1)},-t) \ , \ k=1 \etc n \ .
\end{equation}

We now formulate our main results.

\begin{theorem}\label{th1}
Let $n \ge 5$, $\Delta^n$ be the regular $n$-simplex and assume that $t \in \R$ satisfies $\sqrt{\frac {n-2} {3 (n+1)}} <  t \le  \sqrt{\frac n {n+1}}$. Let $a \in S^n$ with $\sum_{j=1}^{n+1} a_j = 0$. Then $H_t(a^{(1)})$ is a maximal hyperplane section of $\Delta^n$ at distance $t$ to the centroid, i.e.
$$A(a,t) \le A(a^{(1)},t) = \frac {\sqrt{n+1}}{(n-1)!} \ \left(\frac n {n+1} \right)^{n/2} \ \left(\sqrt{ \frac n {n+1}} -t \right)^{n-1} \ . $$
All maximal sections at distance $t$ are parallel to a boundary $(n-1)$-face, and the orthogonal vector is a permutation of $a^{(1)}$. For $n=4$, the same is true if $0.3877 \simeq \tilde{t} < t < \sqrt{ \frac 4 5 }$. For $n=4$ and $\sqrt{\frac {n-2} {3 (n+1)}} = \sqrt{\frac 2 {15}} \le t < \tilde{t}$, $a^{(2)}$ yields a maximal hyperplane at distance $t$ to the centroid. We have $A(a^{(2)},\tilde{t}) = A(a^{(1)},\tilde{t})$.
\end{theorem}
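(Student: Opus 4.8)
The plan is to reduce the theorem to a range of $t$ not covered by the earlier work \cite{Ko} and then to an explicit low--dimensional optimization. First I would record the volume in the simple regime. Sort the coordinates $a_1\ge a_2\ge\cdots\ge a_{n+1}$. If $H_t(a)$ cuts off only the vertex $e_1$, then it meets the edge $[e_1,e_k]$ at the point with barycentric parameter $s_k=\frac{a_1-t}{a_1-a_k}$, so the cap $\{x\in\Delta^n:\pr{a}{x}\ge t\}$ is the image of $\Delta^n$ under the linear map fixing $e_1$ with eigenvalues $s_2\etc s_{n+1}$; computing its volume and dividing by $\frac1n(a_1-t)$ gives
\[
A(a,t)=\frac{\sqrt{n+1}}{(n-1)!}\,\frac{(a_1-t)^{n-1}}{\prod_{k=2}^{n+1}(a_1-a_k)}\qquad(a_1>t\ge a_2).
\]
For $a=a^{(1)}$ every factor $a_1-a_k$ equals $\sqrt{(n+1)/n}$, which yields the stated value. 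Since $A(\cdot,t)$ is continuous in $t$ and \cite{Ko} settles $\sqrt{\frac{n-1}{2(n+1)}}<t\le\sqrt{\frac n{n+1}}$, and since \eqref{eq1.1} lets us assume the coordinates are non--increasing, it remains to treat $\sqrt{\frac{n-2}{3(n+1)}}<t\le\sqrt{\frac{n-1}{2(n+1)}}$. A Cauchy--Schwarz argument using both $\sum a_j=0$ and $\sum a_j^2=1$ shows that in this range at most two coordinates exceed $t$: if $a_1\etc a_k\ge t$ then $1\ge kt^2+\frac{(kt)^2}{n+1-k}=\frac{k(n+1)}{n+1-k}t^2$, which with $t^2>\frac{n-2}{3(n+1)}$ forces $\frac{k(n-2)}{3(n+1-k)}<1$, i.e. $k(n+1)<3(n+1)$, hence $k\le 2$. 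So $H_t(a)\cap\Delta^n$ is empty (then $A(a,t)=0$), an $(n-1)$--simplex (if $a_1>t\ge a_2$), or a prism over an $(n-2)$--simplex (if $a_2>t$).

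In the one--separated--vertex case I would maximize the displayed expression. Fixing $a_1$, the constraints fix $\sum_{k\ge2}(a_1-a_k)=(n+1)a_1$ and $\sum_{k\ge2}(a_1-a_k)^2=(n+1)a_1^2+1$, and a Lagrange--multiplier computation for $\prod_{k\ge2}(a_1-a_k)$ forces the positive numbers $a_1-a_k$ to take at most two distinct values (the boundary constraint $a_2=t$ being the remaining possibility). This reduces the problem, for each $a_1$, to a one--parameter family; one then optimizes over $a_1$ and checks that the maximum is $A(a^{(1)},t)$, attained only when all $a_k$ ($k\ge2$) are equal, which by the constraints forces $a_1=\sqrt{n/(n+1)}$, i.e. $a$ a permutation of $a^{(1)}$. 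This is essentially the mechanism of \cite{Ko}, which I would quote or adapt.

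The crux is the two--separated--vertex case $a_1\ge a_2>t\ge a_3\ge\cdots\ge a_{n+1}$, where the section is the prism and Dirksen's formula \cite{D} expresses $A(a,t)$ as the relevant alternating sum with a removable singularity on the locus $\{a_1=a_2\}$; I would first rewrite it in divided--difference (integral remainder) form so that smoothness there is manifest. Differentiating $A(a,t)$ in the interior of this combinatorial cell under the two constraints, I expect to show that at a critical point the "small" coordinates $a_3\etc a_{n+1}$ take at most two distinct values, the all--equal configuration being the extremal one; combined with the analysis of the boundary faces of the parameter region ($a_2=t$, handled by the previous paragraph, and $a_1=a_2$, leading to the direction $a^{(2)}$), the whole case reduces to verifying
\[
A(a^{(2)},t)<A(a^{(1)},t)\qquad\text{for all }t\text{ in the range and all }n\ge5,
\]
by writing both volumes explicitly (the first again via Dirksen's formula applied to $a^{(2)}$) and comparing them as functions of a single variable. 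This last comparison is the main obstacle: in the prism regime $A(a,t)$ has no single clean closed form, being a sum of terms each singular on $\{a_1=a_2\}$, so both pinning down the critical set and the final one--variable inequality are delicate, and the inequality is genuinely tight — it \emph{fails} for $n=4$ on part of the range.

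For $n=4$ the same scheme applies, but the sign of $A(a^{(2)},t)-A(a^{(1)},t)$ changes on $\sqrt{2/15}<t<\sqrt{4/5}$: both quantities are explicit (piecewise) functions of $t$, and solving $A(a^{(2)},\tilde t)=A(a^{(1)},\tilde t)$ gives $\tilde t\simeq 0.3877$. A sign check of $A(a^{(2)},t)-A(a^{(1)},t)$ on either side of $\tilde t$, together with the reductions above showing that no other direction can exceed both $A(a^{(1)},t)$ and $A(a^{(2)},t)$, then identifies $a^{(2)}$ as maximal on $[\sqrt{2/15},\tilde t)$ and $a^{(1)}$ on $(\tilde t,\sqrt{4/5})$, completing the proof.
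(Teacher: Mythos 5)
Your top-level skeleton does match the paper's: quote \cite{Ko} for $\sqrt{\tfrac{n-1}{2(n+1)}}<t$, use the constraint $\sum a_j=0$, $\sum a_j^2=1$ to show (correctly, via your Cauchy--Schwarz computation, which is Lemma \ref{lem1}) that at most two coordinates exceed $t$ in the remaining range, treat the one-term case by Lagrange multipliers as in \cite{Ko}, treat the two-term case via Dirksen's formula with the removable singularity at $a_1=a_2$, and finish with the comparison of $A(a^{(1)},t)$ and $A(a^{(2)},t)$, which for $n=4$ crosses at $\tilde t\simeq0.3877$. But everything you label as ``I expect to show'' is precisely where the real work lies, and the two structural claims you make there are not what a direct computation gives. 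In the two-term case the Lagrange conditions for the small coordinates (at fixed $a_1>a_2>t$) yield a \emph{cubic} equation, hence at most three distinct values, not two; reducing three values to two is the content of Proposition \ref{prop3.1}, a delicate inequality comparing $\bigl(\tfrac{a_1-t}{a_2-t}\bigr)^{n-1}$ with a product of three ratios raised to powers $p+1,r+1,s+1$, applied through a constraint-preserving perturbation argument, and it is exactly here that the hypothesis $t>\sqrt{\tfrac{n-2}{3(n+1)}}$ is consumed (with a separate ad hoc argument for $n=4$). Moreover the resulting extremal configuration for fixed $a_1>a_2>t$ has the small coordinates at \emph{two} values with multiplicities $n-2$ and $1$, not all equal, contrary to your assertion.

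The second gap is the claim that the case then ``reduces to verifying $A(a^{(2)},t)<A(a^{(1)},t)$.'' The admissible region in $(a_1,a_2)$ is $\{t<a_2\le a_1\le\psi(a_2)\}$ with $\psi$ as in Lemma \ref{lem1}, and its boundary has a third component besides $a_2=t$ and $a_1=a_2$: the curve $a_1=\psi(a_2)$ (vanishing of $W$), on which $a_1$ can be as large as roughly $\sqrt{2/3}$, far above $\sqrt{\tfrac{n-1}{2(n+1)}}$. The paper needs the monotonicity statement of Proposition \ref{prop3.2} (in $a_2$, itself a lengthy induction) to push the maximum to $a_1=a_2$ or to this curve; along $a_1=a_2$ a further monotonicity-in-$a_1$ argument gives the value $A(a^{(2)},t)$, but along $a_2=\psi(a_1)$ the maximum does not funnel through $a^{(2)}$ at all and must be beaten by $A(a^{(1)},t)$ directly, via the explicit estimates of steps iv)--v) of the paper's proof (split at $a_1\gtrless 1.06\,\sqrt{\tfrac{n-1}{2(n+1)}}$). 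In addition, your one-term case is complete only when $a^{(1)}$ is the sole critical point, which \cite{Ko} guarantees for $t>\tfrac2{\sqrt{n(n+1)}}$; this holds for $n\ge5$ but can fail for $n=4$, where the paper must exclude or dominate critical points with two distinct small coordinate values by a separate computation. As it stands, then, your proposal reproduces the paper's strategy in outline but omits its essential analytic content (Propositions \ref{prop3.1} and \ref{prop3.2} and the boundary analysis), and misstates the structure of the critical points on which that content operates.
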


The range $\sqrt{\frac {n-1} {2 (n+1)}} <  t \le  \sqrt{\frac n {n+1}}$ was already covered by a result in K\"onig \cite {Ko}. As in the results of Moody, Stone, Zach and Zvavitch \cite {MSZZ} for the $n$-cube and of Liu and Tkocz \cite {LT} and K\"onig \cite {Ko2} for the $l_1^n$-ball, the maximal hyperplanes far from the barycenter are those perpendicular to the main diagonal. By K\"onig \cite {Ko}, $a^{(1)}$ is a local maximum of $A(\cdot,t)$ if $\frac{2n+1}{n(n+2)} \sqrt{\frac n {n+1}} < t$. It can be routinely verified that $\frac{2n+1}{n(n+2)} \sqrt{\frac n {n+1}} \le \sqrt{\frac {n-2} {3 (n+1)}}$ for all $n \ge 4$. The value $\tilde{t}$ is the solution of a cubic equation. \\

For $t=0$, by Webb \cite {W} the maximal central sections of $\Delta^n$ are those containing $(n-1)$ vertices and the midpoint of the remaining two vertices, orthogonal to vectors like $\frac 1 {\sqrt 2} (1,0 \etc 0,-1)$. Filliman \cite {F} stated without proof that the minimal central sections are those parallel to a face of $\Delta^n$. Recently, Tang \cite{T} proved an asymptotically sharp lower bound for the volume of central sections of $\Delta^n$. \\
For dimensions $n=2, 3$, we give the extrema of $A(\cdot,t)$ for all $t$: \\

\begin{proposition}\label{prop2}
For $n=2$ and $0 \le t \le \sqrt{\frac 2 3}$, let
$$a^{[t]} := \left(\frac 1 2 (\sqrt{2-3 t^2} - t), t , -\frac 1 2 (t+\sqrt{2-3 t^2}) \right) \in S^2 \ \text{  and}  $$
$$a^{\{t\}} := \left(\tilde{a_1},b_+,b_- \right) \in S^2, \tilde{a_1} :=t + \sqrt{t^2-\frac 1 6},  b_{\pm} := -\frac {\tilde{a_1}} 2 \pm \frac 1 2 \sqrt{2 - 3 \tilde{a_1}^2} \ . $$
Then the extrema of $A(\cdot,t)$ on $S^2$ are given by
$$ Maximum :  \left\{\begin{array}{c@{\quad}l}
a^{[t]} \quad \ ,  \; 0 \le t \le \frac 1 {{\sqrt 6}} : & A(a^{[t]},t) = \sqrt 3 \frac 1 {\sqrt{2-3 t^2}} \\
a^{\{t\}} \ , \frac 1 {{\sqrt 6}} < t \le \frac 5 4 \frac 1 {{\sqrt 6}} : & A(a^{\{t\}},t) = \frac 1 {2 \sqrt 3} \frac 1 {t+\sqrt{t^2-1/6}} \\
a^{(1)} \ ,  \frac 5 4 \frac 1 {{\sqrt 6}} \le t < \sqrt{\frac 2 3} : & A(a^{(1)},t) = \frac {2 \sqrt 2} 3 - \frac 2 {\sqrt 3} t
\end{array}\right\}  \; , \; $$
$$ Minimum : \left\{\begin{array}{c@{\quad}l}
a^{(1)} \; , \; 0 \le t \le \frac 1 {{\sqrt 6}} : &  A(a^{(1)},t) = \frac {2 \sqrt 2} 3 - \frac 2 {\sqrt 3} t
\end{array}\right\}  \ . $$
\end{proposition}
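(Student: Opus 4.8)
The plan is to reduce the problem to a one-variable optimization. For $n=2$ a hyperplane section of $\Delta^2$ is a chord of the triangle, so $A(a,t)$ is simply the length of the segment $H_t(a)\cap\Delta^2$. Parametrize admissible directions by $a=(a_1,a_2,a_3)\in S^2$ with $a_1+a_2+a_3=0$; by the symmetry \eqref{eq1.1} and relabelling of coordinates we may assume an ordering such as $a_1\ge a_2\ge a_3$, and then $a_3<0<a_1$. The chord at level $t$ enters and leaves $\Delta^2$ through two of its three edges, and which two depends on where $t$ sits relative to the values $\langle a,e_j\rangle=a_j$. I would first write down, piecewise in $t$, an explicit formula for the chord length: each edge of $\Delta^2$ is $\{e_i+s(e_j-e_i):s\in[0,1]\}$, and the condition $\langle a,x\rangle=t$ gives $s=(t-a_i)/(a_j-a_i)$; the intersection point is then an explicit affine function of $a$ and $t$, and the chord length is the Euclidean distance between the two such points, each lying on a different edge. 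This yields $A(a,t)$ as an explicit (piecewise-rational, with a square-root from the norm) function of $(a_1,a_2)$ once $a_3=-a_1-a_2$ is substituted and the constraint $a_1^2+a_2^2+a_3^2=1$ is imposed.

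Next I would set up the constrained extremization: fix $t$, and optimize the chord-length expression over the one-dimensional curve $\{a\in S^2:\sum a_j=0\}$, which after eliminating $a_3$ and using $\|a\|_2=1$ is genuinely a one-parameter family. The critical-point equation (Lagrange multipliers, or just differentiating the single remaining parameter) should be solvable in closed form; the candidates $a^{[t]}$, $a^{\{t\}}$, $a^{(1)}$ are presumably the roots of that equation together with the ``boundary'' configurations where the chord passes through a vertex (i.e. where the piecewise description of $A(\cdot,t)$ changes). I expect the verification that $a^{[t]}$ and $a^{\{t\}}$ as written actually lie in $S^2$ with vanishing coordinate sum — and that the displayed formulas for $A(a^{[t]},t)$, $A(a^{\{t\}},t)$ are the resulting chord lengths — to be a direct but slightly tedious substitution; e.g. for $a^{[t]}$ one checks $a_1+a_2+a_3=0$ is immediate and $a_1^2+a_2^2+a_3^2=1$ reduces to a polynomial identity in $t$, and similarly the middle coordinate of $a^{\{t\}}$ being exactly $t$ forces $c\in H_t$ to… wait, no: rather, one checks the defining relation $a_1=t+\sqrt{t^2-1/6}$ is exactly the stationarity condition, and $b_\pm$ are then forced by the two constraints.

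Then comes the case analysis in $t$. The three regimes $0\le t\le \tfrac{1}{\sqrt 6}$, $\tfrac1{\sqrt6}<t\le\tfrac54\tfrac1{\sqrt6}$, $\tfrac54\tfrac1{\sqrt6}\le t<\sqrt{2/3}$ correspond to transitions in which family of critical points is the global maximizer: as $t$ grows the optimal chord first is of type $a^{[t]}$, then the relevant edge-incidence pattern changes at $t=1/\sqrt6$ (the distance from the centroid to an edge midpoint, cf. $a^{(2)}$) so $a^{\{t\}}$ takes over, and finally beyond $t=\tfrac54\tfrac1{\sqrt6}$ the chord parallel to a side, orthogonal to $a^{(1)}$, wins and $A(a^{(1)},t)=\tfrac{2\sqrt2}{3}-\tfrac{2}{\sqrt3}t$ is just the affine decay of a side-parallel chord. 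At the junctions one checks the two formulas agree, which pins down the thresholds. For the minimum, I would argue that among all admissible chords at distance $t\le1/\sqrt6$ the shortest is the one parallel to a face: the chord length as a function of the direction is maximized in the interior and minimized at the extreme admissible configuration, and $a^{(1)}$ is that extreme; a monotonicity/convexity argument on the single parameter, or comparison of the critical value against the endpoint value, finishes it.

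The main obstacle will be managing the piecewise structure cleanly — keeping straight, for each $t$, which two edges the chord meets for each candidate direction, and verifying that the listed critical points are not just local but global extrema over the whole curve of admissible $a$. Establishing globality (ruling out that some other direction, with a different edge-incidence pattern, beats the listed candidate) is where genuine care is needed; I would handle it by showing the chord-length function is unimodal in the one remaining parameter on each piece and that the pieces fit together with the claimed monotonic comparison, so that a single critical point per regime suffices.
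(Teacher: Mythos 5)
Your plan is structurally the same as the paper's proof: after eliminating the constraints the problem is a one-parameter optimization, and the extremizers are either interior critical points ($a^{\{t\}}$), configurations where the hyperplane passes through a vertex ($a^{[t]}$, whose middle coordinate is exactly $t$), or the endpoint configuration $a_1=\sqrt{2/3}$ (which is $a^{(1)}$). The only genuine difference is that you propose to get the section length by elementary plane geometry, whereas the paper reads it off from Proposition \ref{prop2.1}; both give the same expression, namely $A(a,t)=2\sqrt3\,(a_1-t)/(6a_1^2-1)$ when $a_1>t\ge a_2,a_3$, and the key point is that this depends on the top coordinate $a_1$ alone, which is what makes the whole proposition a one-variable calculus exercise.

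As written, however, your text is an outline rather than a proof: essentially every quantitative step it relies on is deferred. Concretely: (1) you never produce the chord-length formula, nor the observation that it depends only on $a_1$ within each edge-incidence regime; (2) the regime in which two coordinates exceed $t$ (the chord separates two vertices from one) is not actually treated --- the paper handles it via the reflection $A(a,t)=A(-a,-t)$ of \eqref{eq1.1}, which turns it into the one-term case with $t$ replaced by $-t$, and this is precisely where the maximizer $a^{[t]}$ for $0\le t\le \frac1{\sqrt6}$ arises (the constraint $a_2\le t$ becomes active, i.e.\ the chord passes through a vertex); your sketch gestures at ``boundary configurations'' but does not show that $a_1=\frac12\bigl(\sqrt{2-3t^2}+t\bigr)$ is the binding value and that $(a_1+t)/(6a_1^2-1)$ is decreasing in $a_1$; (3) the monotonicity/unimodality facts that give \emph{global} extremality --- the derivative of $(a_1-t)/(6a_1^2-1)$ vanishes only at $a_1=t+\sqrt{t^2-1/6}$, which exceeds $\sqrt{2/3}$ exactly when $t>\frac54\frac1{\sqrt6}$, pinning down both thresholds --- are the actual content of the proposition and are nowhere established; and (4) the minimum argument is incomplete as stated: it is not enough that $a^{(1)}$ is the extreme configuration of the one-vertex regime, one must also compare with the two-vertex regime, whose shortest chord has length $\frac{2\sqrt2}3+\frac2{\sqrt3}t>A(a^{(1)},t)$. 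None of these steps would fail --- the route is sound and essentially the paper's, with a geometric rather than formula-based derivation of the length --- but they constitute the proof, and your proposal leaves all of them to be filled in.
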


Partially, this can be found already in Dirksen \cite {D} and K\"onig \cite {Ko}. For $0 \le t \le \frac 1 {\sqrt 6}$, $a^{[t]}$ turns continuously from $a^{[0]} = \frac 1 {\sqrt 2} (1,0,-1)$ as given by Webb's result \cite {W} to $a^{(2)} = \frac 1 {\sqrt 6} (1,1,-2)$, and for $\frac 1 {{\sqrt 6}} < t \le \frac 5 4 \frac 1 {{\sqrt 6}}$, $a^{\{t\}}$ turns continuously from $a^{(2)}=\frac 1 {\sqrt 6} (1,1,-2)$ to $a^{(1)} = \frac 1 {\sqrt 6} (2,-1,-1)$. Up to a permutation of coordinates, $a^{(2)}$ is just $-a^{(1)}$. The distance of the center to a side of the triangle $\Delta^2$ is $t = \frac 1 {\sqrt 6}$. Enlarging $t$ a bit, turning the side slightly and moving it partially outside of $\Delta^2$, intersects $\Delta^2$ in about $\frac 1 2$ of its length. Thus there is a discontinuity of $A(a^{\max}, \cdot)$ at $t=\frac 1 {\sqrt 6}$, where $a^{\max}$ denotes the direction of largest value of the parallel section function,
$$A(a^{(2)},\frac 1 {\sqrt 6}) = \sqrt 2  > \lim_{t \searrow 1/{\sqrt 6}} A(a^{\{t\}},t) = \frac 1 {\sqrt 2} \ . $$

\begin{proposition}\label{prop3}
For $n=3$ and $0 \le t \le \frac {\sqrt 3} 2$, let $t_0 := \frac {9+4\sqrt{16-6 \sqrt 3 }}{2(3 \sqrt 3 + 16)} \simeq 0.43575$, $t_1 := \frac{\sqrt 6 +1}{10} \simeq 0.34495$ and
$$a^{[t]} := \left(\frac 1 2 \sqrt{2-8 t^2} - t, t , t, -(\frac 1 2 \sqrt{2-8 t^2}+t) \right) \in S^3 \ \text{  and}  $$
$$a^{\{t\}} := (\tilde{a_1},\tilde{a_1},b_+,b_-) \in S^3, b_{\pm} := -\tilde{a_1} \pm \frac 1 2 \sqrt{2-8 \tilde{a_1}^2} \ , $$
where $t \in [\frac 1 {2 \sqrt 3} , t_1]$ and $\tilde{a_1} \in [\frac 1 {2 \sqrt 3} , \frac 1 2]$ is the unique solution of  $\phi(a_1) = t$,
$$ \phi(a_1) := \frac{ a_1(5-12 a_1^2) + (6 a_1^2 - \frac 1 2) \sqrt{28 a_1^2-1} } {1+36 a_1^2} \ .$$
Then the extrema of $A(\cdot,t)$ on $S^3$ are given by
$$ Maximum :  \left\{\begin{array}{c@{\quad}l}
a^{[t]} \; \ ,  \; 0 \le t \le \frac 1 { 2 {\sqrt 3}} : & A(a^{[t]},t) = \frac 1 {\sqrt{2-8 t^2}} \\
a^{\{t\}} \ , \frac 1 { 2 {\sqrt 3}} < t \le t_1  &  \\
a^{(2)} \;  , \; \  t_1 \le t \le t_0 : & A(a^{(2)},t) = \frac 1 2 - 2 t^2 \\
a^{(1)} \; ,  t_0 \le t < \frac{\sqrt 3} 2 : & A(a^{(1)},t) = \frac {3 \sqrt 3} 8  (\frac {\sqrt 3} 2 -t)^2
\end{array}\right\}  \; , \; $$
$$ Minimum : \left\{\begin{array}{c@{\quad}l}
a^{(1)} \; , \; 0 \le t \le \frac 1 {2 {\sqrt 3}} : &  A(a^{(1)},t) = \frac {3 \sqrt 3} 8  (\frac {\sqrt 3} 2 -t)^2
\end{array}\right\}  \ . $$
\end{proposition}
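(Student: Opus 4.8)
The plan is to combine Dirksen's explicit volume formula with a case analysis over the combinatorial types of the section, optimising by Lagrange multipliers inside each type. By \eqref{eq1.1} we may take $t\ge 0$, and since $A(\cdot,t)$ is permutation-invariant we may order $a_1\ge a_2\ge a_3\ge a_4$; as $\sum_j a_j=0$ this forces $a_1>0>a_4$, so $a_4<t$ always. With $m:=\#\{j:a_j>t\}$, nonempty sections occur for $m\in\{1,2,3\}$ — a triangle cutting off $e_1$ ($a_2\le t<a_1$), a quadrilateral ($a_3\le t<a_2$), or a triangle cutting off $e_4$ ($t<a_3$) — with degenerate Webb-type pieces on the walls $a_2=t$, $a_3=t$. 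For $n=3$ Dirksen's formula, extending Webb \cite{W}, reads for pairwise distinct coordinates
$$A(a,t)=\sum_{j:\,a_j>t}\frac{(a_j-t)^2}{\prod_{k\ne j}(a_j-a_k)}\,,$$
a third divided difference of $(\,\cdot-t)_+^2$, to be read in confluent (derivative) form when coordinates coincide — these confluences are the removable singularities flagged in the introduction.

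In each regime $A(\cdot,t)$ is smooth, and I would find the critical points of $A(\cdot,t)$ on $\{a\in S^3:\sum_j a_j=0\}$ from the Lagrange equations $\partial_{a_j}A=2\lambda a_j+\mu$. In the triangle-at-$e_1$ regime $A=\frac{(a_1-t)^2}{(a_1-a_2)(a_1-a_3)(a_1-a_4)}$, so $\partial_{a_j}A=A/(a_1-a_j)$ for $j\in\{2,3,4\}$, and $A/(a_1-a_j)=2\lambda a_j+\mu$ is a quadratic in $a_j$: hence the three lower coordinates take at most two values, and the critical configurations are either all three equal — which with $\|a\|=1$ forces $a_1=\tfrac{\sqrt3}{2}$, i.e. $a=a^{(1)}$, giving $A(a^{(1)},t)=\tfrac{3\sqrt3}{8}(\tfrac{\sqrt3}{2}-t)^2$ — or the wall configuration $a_2=a_3=t$, which against the two constraints gives exactly the curve $a^{[t]}$ with $A(a^{[t]},t)=\tfrac1{\sqrt{2-8t^2}}$ (only $a_1>t$ there). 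The quadrilateral ($m=2$) regime is handled in the same spirit, though more delicately since each lower coordinate occurs in both terms; its relevant critical configurations turn out to be (permutations of) $a^{(2)}$, with $A(a^{(2)},t)=\tfrac12-2t^2$, and the curve $a^{\{t\}}$ joining $a^{(3)}$ to $a^{(2)}$, while the $m=3$ regime reduces to $m=1$ via $A(a^{(3)},t)=A(a^{(1)},-t)$. This leaves the finite candidate list $\{a^{[t]},a^{\{t\}},a^{(1)},a^{(2)},a^{(3)},\text{Webb's central direction}\}$ (up to permutation).

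It then remains to assemble the answer by comparing candidate values as $t$ varies. For the maximum: $a^{[t]}$ wins on $0\le t\le\tfrac1{2\sqrt3}$ (it is Webb's maximiser at $t=0$ and equals $a^{(3)}$ at the right end, so the transition there is continuous); $a^{\{t\}}$ wins on $\tfrac1{2\sqrt3}<t\le t_1$, reaching $a^{(2)}$ exactly at $t_1=\tfrac{\sqrt6+1}{10}$ (the value of the displayed constraint equation at $a_1=\tfrac12$); $a^{(2)}$ wins on $t_1\le t\le t_0$; and $a^{(1)}$ wins on $t_0\le t<\tfrac{\sqrt3}{2}$, where $t_0$ solves $\tfrac12-2t^2=\tfrac{3\sqrt3}{8}(\tfrac{\sqrt3}{2}-t)^2$ — clearing denominators gives $(3\sqrt3+16)t^2-9t+\tfrac14(9\sqrt3-16)=0$ with discriminant $16(16-6\sqrt3)$, so $t_0=\frac{9+4\sqrt{16-6\sqrt3}}{2(3\sqrt3+16)}$. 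For the minimum on $0\le t\le\tfrac1{2\sqrt3}$ (the range where $\max_j a_j\ge\tfrac1{2\sqrt3}>t$ keeps $A$ away from $0$), the same critical-point list applies and comparing values shows $a^{(1)}$ is smallest; equivalently, within the $m=1$ regime minimising $A$ means maximising $\prod_{k\ge2}(a_1-a_k)$ under the constraints and over $a_1\in[\tfrac1{2\sqrt3},\tfrac{\sqrt3}{2}]$, which singles out $a^{(1)}$.

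The hard part will be the confluent, piecewise nature of Dirksen's formula: the extremisers sit exactly on walls ($a_j=t$) or at repeated-coordinate configurations, so a naive ``equalise coordinates within a group'' symmetrisation is not available, and one must analyse $A$ and the Lagrange system along these lower-dimensional strata and control the one-sided limits of the confluent divided differences. Two further technical points are ruling out spurious critical points of the two-term quadrilateral formula and verifying that the $a^{\{t\}}$-branch genuinely beats all competitors on $(\tfrac1{2\sqrt3},t_1]$ (the part ``partially'' already in \cite{D},\cite{Ko}), and the crossover bookkeeping — solving for $t_0$ and checking $\tfrac1{2\sqrt3}<t_1<t_0<\tfrac{\sqrt3}{2}$ together with the relative ordering of all branches over $[0,\tfrac{\sqrt3}{2})$.
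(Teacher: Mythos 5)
Your overall strategy (Dirksen's formula, a split according to how many coordinates exceed $t$, Lagrange multipliers within each regime, then a comparison of the surviving branches and crossover values $t_0,t_1$) is the same as the paper's, and your computation of $t_0$ and of the identification $t_1=t_+(\tfrac12)$ is right. The gap is that your candidate lists are asserted rather than derived, and in the one-term regime the list you state is actually incomplete. The Lagrange equations there only give that $a_2,a_3,a_4$ take at most \emph{two} values; they do not force ``all equal or on the wall $a_2=a_3=t$''. Writing $a=(a_1,c,c,d)$ with the two solution branches $(c_\pm,d_\pm)$ of the constraints, the resulting one-parameter functions $f_\pm(a_1,t)$ have genuine \emph{interior} critical points: for the $(c_+,d_+)$ branch these occur where $t=\tfrac{a_1}{3}+\tfrac{1}{10a_1+\sqrt{6-8a_1^2}}$ (for $\tfrac1{2\sqrt3}<t<\tfrac{7}{10\sqrt3}$), giving candidates $\tilde a=(a_0,c(a_0),c(a_0),d(a_0))$ that are neither $a^{(1)}$, nor $a^{[t]}$, nor permutations of $a^{(k)}$; a similar interior maximum occurs on the $(c_-,d_-)$ branch for $t$ up to about $0.4259$. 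The paper has to exclude these by explicit quantitative comparisons (e.g.\ showing $A(\tilde a,t)<A(a^{(2)},t)$ for $a_0\ge\tfrac13$ and $A(\tilde a,t)\le\frac{16}{75}\sqrt3<A(a^{\{t\}},t)$ near $t=\tfrac1{2\sqrt3}$). Without this step your finite candidate list, and hence the whole assembly of the maximum, is unjustified.

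The same issue recurs in the two-term (quadrilateral) regime, which you dispose of with ``turn out to be $a^{(2)}$ and the curve $a^{\{t\}}$''. That is exactly where the bulk of the paper's work lies: one must factor $\partial_{a_1}f-\partial_{a_2}f$ to see that a critical point has either $a_1=a_2$ (a confluent limit, handled by l'Hospital, producing $F(a_1,t)$ and the curve $t_\pm(a_1)$ that defines $a^{\{t\}}$ and $t_1$) or satisfies one of two explicit relations for $t$; then show these relations have no solutions for $t\ge\tfrac1{2\sqrt3}$, and for $t<\tfrac1{2\sqrt3}$ rule out one relation by proving it would force $a_3>t$, while on the other relation $A(a,t)=\tfrac1{2(a_1+a_2)}<A(a^{[t]},t)$. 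Likewise, your minimum argument (``minimising $A$ means maximising $\prod_{k\ge2}(a_1-a_k)$'') ignores that the factor $(a_1-t)^2$ also varies and that the minimum must still be compared against the $a_1=a_2$ branch; the paper instead proves a sign statement for $(\ln f_+)'$ on $0\le t\le\tfrac1{2\sqrt3}$, which simultaneously places the minimum at $a_1=\tfrac{\sqrt3}2$ (i.e.\ $a^{(1)}$) and the maximum on the wall giving $a^{[t]}$, and then checks $A(a^{(1)},t)$ against the remaining branches. In short: the skeleton matches the paper, but the steps you label as ``handled in the same spirit'' and ``turn out to be'' are precisely the monotonicity and comparison arguments that constitute the proof, and one of your structural claims (no interior critical points with two distinct lower coordinates) is false as stated.
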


For $\frac 1 {2 \sqrt 3} < t \le t_1$, $A(a^{\{t\}},t) = \psi(\tilde{a_1})$, $\psi(a_1) := \frac{4 \left[ a_1(5+52 a_1^2)- (2a_1^2+ \frac 1 2)\sqrt{28 a_1^2-1} \right] }{(1+36 a_1^2)^2}$. Again, $a^{[t]}$ turns continuously from  $a^{[0]} = \frac 1 {\sqrt 2} (1,0,0,-1)$ as given by Webb \cite {W} to $a^{(3)} = \frac 1 {2 \sqrt 3} (1,1,1,-3)$ which is $-a^{(1)}$ up to a permutation of coordinates. For $\frac 1 {2 \sqrt 3} < t \le t_1$, $a^{\{t\}}$ turns continuously from $a^{(3)}$ to $a^{(2)} = \frac 1 2  (1,1,-1,-1)$. Again there is a discontinuity at the distance of the center to a boundary triangle $t = \frac 1 {2 \sqrt 3}$,
$$A(a^{(3)},\frac 1 {2 \sqrt 3}) = \frac {\sqrt 3} 2 > \lim_{t \searrow 1/{2 \sqrt 3}} A(a^{\{t\}},t) = \frac 5 9 \frac {\sqrt 3} 2 = \frac 5 {18} {\sqrt 3} \ . $$
The last value stems from the fact that for $t \searrow \frac 1 {2 \sqrt 3}$, also $\tilde{a_1} \searrow \frac 1 {2 \sqrt 3}$. The factor $\frac 5 9$ is easily explained: Dividing a boundary triangle of $\Delta^3$ by a line through its center parallel to a side into a triangle and a trapezoid , the area of the trapezoid is $\frac 5 9$-th of the area of the triangle itself. \\

In the next chapter we state a general formula for $A(a,t)$ which involves alternating terms with removable singularities and give some consequences, e.g. a formula for $A(a^{(k)},t)$. After some preparations in chapter 3, we prove Theorem \ref{th1} in chapter 4. The proof uses some ideas also found in K\"onig \cite {Ko2}, though there are essential differences due to the non-symmetry of $\Delta^n$ and the additional constraint $\sum_{j=1}^{n+1} a_j = 0$. Basically, we show that there are at most three different coordinates in critical points $a \in S^{n-1}$ of $A(\cdot,t)$ in addition to the largest two coordinates, which is then reduced to two with predetermined multiplicity. This leads to specific functions of the largest two coordinates $(a_1,a_2)$ of $a$ to be investigated. We then apply some monotonicity result proved in chapter 3. In chapter 5 we verify Propositions \ref{prop2} and \ref{prop3}. \\

The author would like to thank the referee for carefully evaluating the paper, for his questions and suggestions, helping to improve the manuscript. \\

\section{A volume formula for sections of the $n$-simplex}

The general assumption on $a$ implies

\begin{lemma}\label{lem1}
Let $a \in S^n$, $\sum_{j=1}^{n+1} a_j=0$, $a_1 \ge a_2 \ge \cdots \ge a_{n+1}$ and $1 \le k \le n$. \\
i) If $a_1 \ge \cdots \ge a_k \ge 0$,  $a_k \le \sqrt{\frac{n-k+1}{k(n+1)}}$. In particular, $a_1 \le \sqrt{\frac n {n+1}}$. If $a_1 > a_2$, $a_3 < \sqrt{\frac{n-2}{3(n+1)}}$ \ . \\
ii) If $a_1 \ge a_2 \ge 0$, $a_1 \le \psi(a_2) := \sqrt{\frac{n-1} n} \sqrt{1 - \frac{n+1} n a_2^2} - \frac {a_2} n$. $\psi$ is a decreasing function with $\psi^2 = Id$, $a_1 = \psi(a_2)$ if and only if $a_2 = \psi(a_1)$.
\end{lemma}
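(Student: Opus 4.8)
The plan is to treat (i) and (ii) as constrained optimization problems over the sphere $S^n$ with the linear constraint $\sum_j a_j = 0$. For part (i), suppose $a_1 \ge \cdots \ge a_{n+1}$ with $a_1,\dots,a_k \ge 0$, and I want to bound $a_k$. The idea is that, with $a_k$ held fixed at some value $s \ge 0$, the coordinates $a_1,\dots,a_{k-1}$ are forced to be $\ge s$, and the remaining $n-k+1$ coordinates $a_{k+1},\dots,a_{n+1}$ are $\le s$; since $\sum_j a_j = 0$ and $\sum_j a_j^2 = 1$, I would push the configuration to the extreme case that maximizes $s$: set $a_1 = \cdots = a_k = s$ and $a_{k+1} = \cdots = a_{n+1} = -\frac{k}{n-k+1}s$ (this is forced by the mean-zero condition once all the "upper" coordinates equal $s$ and all "lower" ones are equal). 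Then the norm condition $k s^2 + (n-k+1)\frac{k^2}{(n-k+1)^2}s^2 = 1$ gives $s^2\,\frac{k(n+1)}{n-k+1} = 1$, i.e. $s = \sqrt{\frac{n-k+1}{k(n+1)}}$, which is exactly $\|a^{(k)}\|$-normalizing value; this is the claimed bound. The case $k=1$ is the special case, giving $a_1 \le \sqrt{n/(n+1)}$.

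The point that needs care is justifying that this symmetric configuration is genuinely the maximizer of $a_k$ — i.e. that spreading the upper coordinates above $s$ or the lower ones below $-\frac{k}{n-k+1}s$ cannot allow a larger $s$. I would argue this by a direct variance/Lagrange argument: among all vectors with $a_1,\dots,a_k \ge s$, $a_{k+1},\dots,a_{n+1}\le s$, $\sum a_j = 0$, the constraint $\sum a_j^2 = 1$ together with convexity of $x \mapsto x^2$ (or Cauchy–Schwarz applied separately to the two blocks) forces $\sum_{j=1}^k a_j^2 \ge k s^2$ and $\sum_{j>k} a_j^2 \ge \frac{1}{n-k+1}\bigl(\sum_{j>k}a_j\bigr)^2 = \frac{k^2 s^2}{n-k+1}$ when $\sum_{j=1}^k a_j = -\sum_{j>k} a_j$ is at least $ks$... actually the cleanest route is: $1 = \sum a_j^2 \ge \sum_{j=1}^k a_j^2 + \frac{1}{n-k+1}\bigl(\sum_{j=1}^{k} a_j\bigr)^2 \ge k s^2 + \frac{(k s)^2}{n-k+1} = \frac{k(n+1)}{n-k+1}s^2$, using $\sum_{j=1}^k a_j \ge k s \ge 0$ and the power-mean / QM–AM inequality on the lower block. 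This is the main (though still elementary) obstacle, and I expect the bulk of the writing to be making this chain of inequalities precise with the correct direction of each estimate.

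For part (ii), I would specialize: fix $a_2 = b \ge 0$, so $a_1 \ge b$ and $a_3,\dots,a_{n+1} \le b$. Now I want to maximize $a_1$. With $a_1$ fixed, the remaining $n$ coordinates $a_2,\dots,a_{n+1}$ must satisfy $\sum_{j\ge 2} a_j = -a_1$ and $\sum_{j\ge 2} a_j^2 = 1 - a_1^2$; the quadratic constraint is satisfied with the largest possible $a_1$ exactly when $a_2,\dots,a_{n+1}$ are as equal as possible — but they are not free, since $a_2 = b$ is prescribed and the rest are $\le b$. Taking $a_3 = \cdots = a_{n+1} = c$ with $c = \frac{-a_1 - b}{n-1}$, the norm equation becomes $a_1^2 + b^2 + (n-1)c^2 = 1$, i.e. $a_1^2 + b^2 + \frac{(a_1+b)^2}{n-1} = 1$. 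Solving this quadratic in $a_1$ and taking the larger root yields precisely $a_1 = \psi(a_2) = \sqrt{\frac{n-1}{n}}\sqrt{1 - \frac{n+1}{n}a_2^2} - \frac{a_2}{n}$; one checks the discriminant and that this root indeed dominates $b$. Finally, monotonicity of $\psi$ is a one-line computation: $\psi'(b) = -\sqrt{\frac{n-1}{n}}\cdot\frac{(n+1)b/n}{\sqrt{1 - \frac{n+1}{n}b^2}} - \frac{1}{n} < 0$ for all $b$ in the domain, so $\psi$ is strictly decreasing. Here the only subtlety is again verifying that collapsing $a_3,\dots,a_{n+1}$ to a common value is optimal, which follows from the same QM–AM estimate $\sum_{j=3}^{n+1} a_j^2 \ge \frac{1}{n-1}\bigl(\sum_{j=3}^{n+1} a_j\bigr)^2$ as in part (i).
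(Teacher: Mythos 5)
Your proposal is correct and, once the exploratory framing settles on "the cleanest route," it is essentially the paper's own argument: the chain $1 \ge \sum_{j\le k} a_j^2 + \frac{1}{n-k+1}\bigl(\sum_{j\le k} a_j\bigr)^2 \ge \frac{k(n+1)}{n-k+1}\,a_k^2$ is exactly the paper's Cauchy--Schwarz estimate on the lower block combined with $k a_k \le \sum_{j\le k} a_j$ and $k a_k^2 \le \sum_{j\le k} a_j^2$, and part (ii) likewise reduces to the same quadratic inequality $(a_1+a_2)^2 \le (n-1)(1-a_1^2-a_2^2)$ solved for $a_1$, with the identical computation of $\psi'$. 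No gaps.
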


Therefore, if $\sqrt{\frac{n-k}{(k+1)(n+1)}} < t \le \sqrt{\frac{n-k+1}{k(n+1)}}$, as assumed in Theorem \ref{th1} for $k=2$ or $k=1$, $a_{k+1} < t$ and there are $\le k$ vertices $e_j$ of $\Delta^n$ in $H_+(a,t)$. \\
We will use the function $\psi$ throughout the paper. \\

\begin{proof}
i) Using the Cauchy Schwarz inequality, we find that \\
$ \sum_{j=1}^k a_j = - \sum_{j=k+1}^{n+1} a_j \le \sqrt{n-k+1} \sqrt{\sum_{j=k+1}^{n+1} a_j^2}  = \sqrt{n-k+1} \sqrt{1- \sum_{j=1}^k a_j^2}$,
which together with $k a_k \le \sum_{j=1}^k a_j$ and $k a_k^2 \le \sum_{j=1}^k a_j^2$ implies $a_k \le \sqrt{\frac{n-k+1}{k(n+1)}}$. \\

ii) For $k=2$, the previous inequalities yield $(a_1+a_2)^2 \le (n-1) ( 1- (a_1^2+a_2^2) )$. This quadratic inequality implies
$a_1 \le \sqrt{\frac{n-1} n} \sqrt{1 - \frac{n+1} n a_2^2} - \frac {a_2} n =: \psi(a_2)$. Since
$\psi'(a_2) = -\sqrt{\frac{n-1} n} \frac{(n+1)/n a}{\sqrt{1-(n+1)/n a^2}} - \frac 1 n < 0$, $\psi$ is strictly decreasing. We have $a_1 = \psi(a_2)$ if and only if $(a_1+a_2)^2 = (n-1) ( 1- (a_1^2+a_2^2) )$, which is symmetric in $a_1$ and $a_2$.
\end{proof}

We need an explicit formula for $A(a,t)$. Put $x_+ := \max(x,0)$ for any $x \in \R$. Then:

\begin{proposition}\label{prop2.1}
Let $n \in \N$, $t \in \R$ with $|t|\le \sqrt{\frac n {n+1}}$ and $a \in S^n$ with $\sum_{j=1}^{n+1} a_j =0$. Assume that $a_1 > a_2 > \cdots > a_{n+1}$. Then
\begin{equation}\label{eq2.1}
A(a,t) = \frac{\sqrt{n+1}}{(n-1)!} \sum_{j=1}^{n+1} \frac{(a_j-t)_+^{n-1}}{\prod_{k=1,k \ne j}^{n+1} (a_j - a_k)} \ .
\end{equation}
\end{proposition}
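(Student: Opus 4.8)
The plan is to compute the $(n-1)$-dimensional volume of the slice $H_t(a)\cap\Delta^n$ by reducing it to a Laplace-transform / Fourier-analytic identity and then inverting. Concretely, I would start from the observation that $\Delta^n$ is an $n$-dimensional simplex sitting inside the affine hyperplane $\{\sum x_j=1\}$ of $\R^{n+1}$, and that the linear functional $x\mapsto\pr{a}{x}$ restricted to this hyperplane is (because $\sum a_j=0$) a genuine height function whose level sets are the desired slices. The standard device is to integrate against a test function in the variable $t$: for a suitable $f$, one has
\[
\int_{\R} f(t)\,A(a,t)\,dt \;=\; c_n\int_{\Delta^n} f(\pr{a}{x})\,d\s(x),
\]
where $d\s$ is the surface measure on the hyperplane $\{\sum x_j = 1\}$ and $c_n$ is an explicit constant built from $\sqrt{n+1}$ (the factor accounting for the discrepancy between Lebesgue measure on $\Delta^n$ and the parametrization by $n$ of the $n+1$ coordinates). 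The right-hand side, after using the constraint $\sum x_j=1$ to eliminate one variable, becomes an integral over the standard simplex $\{x_j\ge 0\}$ of a function of a linear form, which is exactly the setting handled by the classical formula of Laplace for the density of a linear combination of the coordinates of a uniform point on a simplex.

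Next I would carry out that one-dimensional computation. Taking $f(t)=e^{-st}$ (a Laplace transform in $t$), the integral $\int_{\Delta^n_{\mathrm{std}}} e^{-s\pr{a}{x}}\,dx$ over the standard $n$-simplex is a textbook Dirichlet-type integral and evaluates, under the hypothesis $a_1>a_2>\cdots>a_{n+1}$ that the $a_j$ are pairwise distinct, to a sum of the form $\sum_{j} e^{-s a_j}\big/\prod_{k\ne j}(s\text{-independent differences})$; more precisely one gets (after clearing the $\sum a_j=0$ normalization) something proportional to $\sum_{j=1}^{n+1} e^{-s a_j}\big/\prod_{k\ne j}(a_j-a_k)$ divided by an appropriate power of $s$. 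This is the Laplace transform of the piecewise-polynomial function $t\mapsto \frac{1}{(n-1)!}\sum_j \frac{(a_j-t)_+^{n-1}}{\prod_{k\ne j}(a_j-a_k)}$, since $\int_{\R} e^{-st}\,(a_j-t)_+^{n-1}\,dt = (n-1)!\,e^{-s a_j}/s^{n}$. Matching the two sides and absorbing $c_n=\sqrt{n+1}$ gives \eqref{eq2.1}. Alternatively, and perhaps more cleanly to present, one can avoid Laplace transforms entirely and argue by induction on $n$: slicing $\Delta^n$ by the hyperplane $\{x_{n+1}=0\}$-type faces expresses $A(a,t)$ as an integral of lower-dimensional section functions, and the partial-fraction identity $\sum_j \frac{1}{\prod_{k\ne j}(a_j-a_k)} \cdot(\cdots)$ propagates through the integration; the base case $n=1$ is immediate.

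The main obstacle I anticipate is bookkeeping of the normalizing constants and of the geometry of the parametrization: the simplex lives in codimension one in $\R^{n+1}$, so one must be careful that the factor $\sqrt{n+1}$ — which is the length of the normal vector $(1,\dots,1)$ and also shows up in $\vol(\Delta^n)=\sqrt{n+1}/n!$ — enters exactly once and with the right power, and that the direction vector $a$ being a \emph{unit} vector in $\R^{n+1}$ (rather than in the hyperplane) is correctly accounted for when passing from ``height'' to ``Euclidean distance $t$.'' A secondary technical point is the hypothesis $a_1>\cdots>a_{n+1}$, strict: it guarantees all denominators $\prod_{k\ne j}(a_j-a_k)$ are nonzero, so the formula is literally an identity of smooth (indeed piecewise-polynomial) functions; the case of repeated coordinates is then recovered by continuity, with the singularities becoming removable, which is precisely the subtlety flagged in the introduction and presumably treated in the subsequent discussion rather than here. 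I would relegate the constant-chasing to a short lemma or a remark and keep the main line of argument to the Laplace-transform identity and its inversion.
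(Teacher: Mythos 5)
Your proposal is essentially correct, but it follows a different route from the one the paper relies on: the paper does not prove Proposition \ref{prop2.1} at all, it cites Corollary 2.4 of Dirksen \cite{D} (extending Webb \cite{W}), whose proof uses Koldobsky's Fourier-analytic method, writing $A(a,t) = \frac 1 {2\pi}\int_\R \prod_{k=1}^{n+1}\frac{1}{1+i(a_k-t)s}\,ds$ and evaluating this by the residue theorem, the simple poles at $s = i/(a_k-t)$ producing exactly the terms $(a_j-t)_+^{n-1}/\prod_{k\ne j}(a_j-a_k)$. Your version replaces the residue computation by a Laplace-transform/divided-difference argument: the coarea identity $\int_\R f(t)A(a,t)\,dt=\int_{\Delta^n} f(\pr a x)\,d\vol_n(x)$ (valid with no extra constant since $a$ is a unit vector with $\sum_j a_j=0$, hence tangent to the hyperplane $\{\sum_j x_j=1\}$; the $\sqrt{n+1}$ enters only through $\vol_n(\Delta^n)=\sqrt{n+1}/n!$ when you drop a coordinate), the classical Dirichlet-type evaluation of $\int_{\Delta^n}e^{-s\pr a x}\,d\vol_n(x)$ as a divided difference of the exponential, and uniqueness of the transform. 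This is a legitimate, self-contained alternative; what the residue route buys is that no region-of-convergence bookkeeping is needed, whereas your route is arguably more elementary (no contour integration) and makes the appearance of the divided-difference structure transparent. One small repair is needed in your transform pair: $\int_\R e^{-st}(a_j-t)_+^{n-1}\,dt$ converges only for $\re s<0$ and equals $(n-1)!\,e^{-sa_j}/(-s)^n$, not $(n-1)!\,e^{-sa_j}/s^n$; moreover the individual terms of the claimed density are not integrable on $\R$, only their sum is (it vanishes for $t<a_{n+1}$ by the divided-difference identity $\sum_j (a_j-t)^{n-1}/\prod_{k\ne j}(a_j-a_k)=0$), so you must either work term by term in the half-plane $\re s<0$ and track the resulting $(-1)^n$, or use purely imaginary $s$ and the compact support of $A(a,\cdot)$. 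These are bookkeeping issues of the kind you already flagged, not gaps in the argument; your treatment of the strict-ordering hypothesis and of the extension to repeated coordinates by continuity matches the paper's usage.
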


This is Corollary 2.4 of Dirksen \cite {D}. It is stated there for sequences $a \in S^n$ without $\sum_{j=1}^{n+1} a_j =0$, but explained how to transform it into the above formula. Formula \eqref{eq2.1} generalizes a result of Webb \cite {W} for central sections. The proof uses the Fourier transform technique explained in Koldobsky's book \cite {K} leading to $A(a,t) = \frac 1 {2 \pi} \int_\R \prod_{k=1}^{n+1} \frac 1 {1+ i (a_k - t) s } \ ds$, which is evaluated using the residue theorem. \\

By \eqref{eq1.1}, $A(a,t) = A(-a,-t)$. Possibly exchanging $(a,t)$ with $(-a,-t)$, we may apply \eqref{eq2.1} with at most $[\frac{n+1}2]$ non-zero terms, thus with less than $n$ non-zero terms. For the standard vectors $a^{(k)}$ we have the following result.

\begin{proposition}\label{prop2.2}
Let $1 \le k \le n$ and $-\sqrt{\frac k {(n-k+1)(n+1)}} < t < \sqrt{\frac{n-k+1}{k(n+1)}}$. Then
$A(a^{(k)},t) =  \frac{\sqrt{n+1}}{(n-1)!} \binom{n-1}{k-1} \sqrt{\frac{k(n-k+1)}{n+1}}^n \left(t+\sqrt{\frac k {(n-k+1)(n+1)}} \right)^{k-1} \left(\sqrt{\frac{n-k+1}{k(n+1)}}-t \right)^{n-k}$.
\end{proposition}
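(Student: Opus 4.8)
The plan is to deduce the formula from \eqref{eq2.1} by a confluence (removable-singularity) argument, since $a^{(k)}$ has only two distinct coordinate values while \eqref{eq2.1} requires the $a_j$ to be pairwise distinct. First I record the structure of $a^{(k)}$: its coordinates equal $\alpha := \sqrt{\frac{n-k+1}{k(n+1)}}$ with multiplicity $k$ and $\beta := -\sqrt{\frac{k}{(n-k+1)(n+1)}}$ with multiplicity $n-k+1$, and a short computation gives $\alpha-\beta = \sqrt{\frac{n+1}{k(n-k+1)}}$. The hypothesis $-\sqrt{\frac{k}{(n-k+1)(n+1)}} < t < \sqrt{\frac{n-k+1}{k(n+1)}}$ is precisely $\beta < t < \alpha$.

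Next I would pick a sequence $b^{(m)} \in S^n$ with $\sum_j b^{(m)}_j = 0$, pairwise distinct coordinates, and $b^{(m)} \to a^{(k)}$: split the $k$ coordinates equal to $\alpha$ into values $\alpha_1 > \cdots > \alpha_k$ in a small neighbourhood of $\alpha$ (bounded away from $t$ and from $\beta$) and the $n-k+1$ coordinates equal to $\beta$ into $\beta_1 > \cdots > \beta_{n-k+1}$ near $\beta$ (bounded away from $t$), keeping $\sum=0$ and renormalizing to the sphere. Since $A(\cdot,t)$ is continuous on $\{a\in S^n:\sum_j a_j=0\}$ for our fixed $t$ (equivalently, the apparent poles on the diagonal in \eqref{eq2.1} are removable), $A(a^{(k)},t)=\lim_m A(b^{(m)},t)$. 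In \eqref{eq2.1} applied to $b^{(m)}$, each term from a coordinate $\beta_l<t$ has $(\beta_l-t)_+=0$ and drops out, while the sum over the coordinates $\alpha_i$ is the $(k-1)$st divided difference $[\alpha_1,\dots,\alpha_k]\,g_m$ of $g_m(x):=(x-t)^{n-1}\big/\prod_{l=1}^{n-k+1}(x-\beta_l)$. Letting $\beta_l\to\beta$ and $\alpha_i\to\alpha$ and using that divided differences at confluent nodes of a smooth function converge to the corresponding derivative, I get
\[
A(a^{(k)},t) = \frac{\sqrt{n+1}}{(n-1)!}\cdot\frac{1}{(k-1)!}\left[\frac{d^{k-1}}{dx^{k-1}}\,\frac{(x-t)^{n-1}}{(x-\beta)^{n-k+1}}\right]_{x=\alpha}.
\]

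Then I would evaluate the $(k-1)$st derivative of $h(x):=(x-t)^{n-1}(x-\beta)^{-(n-k+1)}$ at $x=\alpha$ by the Leibniz rule: the $j$th derivative of $(x-t)^{n-1}$ is $\frac{(n-1)!}{(n-1-j)!}(x-t)^{n-1-j}$ and the $(k-1-j)$th derivative of $(x-\beta)^{-(n-k+1)}$ is $(-1)^{k-1-j}\frac{(n-1-j)!}{(n-k)!}(x-\beta)^{-(n-j)}$, so the factorials cancel and
\[
h^{(k-1)}(\alpha) = \frac{(n-1)!}{(n-k)!}(\alpha-\beta)^{-n}(\alpha-t)^{n-k}\sum_{j=0}^{k-1}\binom{k-1}{j}(\alpha-\beta)^{j}(t-\alpha)^{k-1-j} = \frac{(n-1)!}{(n-k)!}\,\frac{(\alpha-t)^{n-k}(t-\beta)^{k-1}}{(\alpha-\beta)^{n}},
\]
the last step being the binomial theorem applied to $(\alpha-\beta)+(t-\alpha)=t-\beta$. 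Substituting this, together with $\frac{1}{(k-1)!(n-k)!}=\frac{1}{(n-1)!}\binom{n-1}{k-1}$, $(\alpha-\beta)^{-n}=\big(\frac{k(n-k+1)}{n+1}\big)^{n/2}$, $\alpha-t=\sqrt{\frac{n-k+1}{k(n+1)}}-t$ and $t-\beta=t+\sqrt{\frac{k}{(n-k+1)(n+1)}}$, gives exactly the asserted formula.

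The algebra is routine; the point needing care is the passage to the limit — namely that $A(a^{(k)},t)=\lim_m A(b^{(m)},t)$ (continuity of the parallel section function, or the removability of the diagonal singularities in \eqref{eq2.1}) and that the divided differences converge to $h^{(k-1)}(\alpha)/(k-1)!$; for this one checks that in our range $\beta<t<\alpha$ the node $\alpha$ stays away from $\beta$ and from $t$, so the relevant function is real-analytic there. Alternatively, one can bypass the perturbation and evaluate the Fourier-analytic integral representation behind Proposition \ref{prop2.1} directly by the residue theorem: since $\alpha-t>0>\beta-t$, closing the contour in the upper half-plane picks up only the order-$k$ pole at $s=i/(\alpha-t)$, and the same $(k-1)$st-derivative computation produces the formula.
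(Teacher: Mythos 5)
Your proposal is correct and follows essentially the same route as the paper: perturb the repeated coordinates of $a^{(k)}$ so that formula \eqref{eq2.1} applies, observe that the surviving terms form a $(k-1)$-st divided difference of $g(x)=(x-t)^{n-1}(x-\beta)^{-(n-k+1)}$ converging to $\tfrac{1}{(k-1)!}g^{(k-1)}(\alpha)$, and then evaluate that derivative. The only cosmetic differences are that you also split the coordinates at $\beta$ (the paper keeps them equal, invoking continuity of \eqref{eq2.1} below $t$) and that you compute $g^{(k-1)}$ in closed form via the Leibniz rule and the binomial theorem, whereas the paper verifies the same expression by induction.
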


Since $|\langle a^{(k)},e_j \rangle| \ge \sqrt{\frac k {(n-k+1)(n+1)}}$ for all $j=1 \etc n+1$, $A(a^{(k)},t) = 0$ for $t < -\sqrt{\frac k {(n-k+1)(n+1)}}$. \\

\begin{proof}
By continuity, formula \eqref{eq2.1} also holds for all sequences $a \in S^n$ with $a_1 > a_2 > \cdots > a_k > t \ge a_{k+1}, \cdots , a_{n+1}$ where some or all of the $a_l$ with $l >k$ may be equal. For $k=1$, equation \eqref{eq2.1} immediately implies the claim. For $a_1 > a_2 > t \ge a_3 \etc a_{n+1}$,
$$A(a,t) = \frac{\sqrt{n+1}}{(n-1)!} \frac 1 {a_1-a_2} \left( \frac{(a_1-t)^{n-1}}{\prod_{j=3}^{n+1} (a_1-a_j)} - \frac{(a_2-t)^{n-1}}{\prod_{j=3}^{n+1} (a_2-a_j)} \right) \ . $$
For $k=2$, with $a^{(2)} = (b,b,c, \cdots , c)$, $b := \sqrt{\frac{n-1}{2(n+1)}}$, $c := -\sqrt{\frac 2 {(n-1)(n+1)}}$, let
$a(\varepsilon):= \frac 1 {\sqrt{1-\varepsilon^2}} (b+ \varepsilon, b-\varepsilon, c, \cdots , c)$, $\sum_{j=1}^{n+1} a(\varepsilon)_j=0$, $\sum_{j=1}^{n+1} a(\varepsilon)_j^2 = 1$,  $a(\varepsilon) \to a^{(2)}$ and $A(a(\varepsilon),t) \to A(a^{(2)},t)$ as $\varepsilon \to 0$. The continuous extension of \eqref{eq2.1} to $a(\varepsilon)$ then contains two alternating non-zero summands with $b+\varepsilon = a_1$ and $b-\varepsilon =a_2$, provided that $b-\varepsilon > t$. We find for $t <  \sqrt{\frac{n-1}{2(n+1)}}$
$$\frac{(n-1)!}{\sqrt{n+1}} A(a^{(2)},t) = \lim_{\varepsilon \to 0} \frac 1 {2 \varepsilon} \left( \frac{(b-t+\varepsilon)^{n-1}} {(b-c+\varepsilon)^{n-1}} - \frac{(b-t-\varepsilon)^{n-1}} {(b-c-\varepsilon)^{n-1}} \right)  = g'(b) \ , $$
where $g(x):= \frac{(x-t)^{n-1}}{(x-c)^{n-1}}$, $g'(x) = (n-1) (t-c) \frac{(x-t)^{n-2}}{(x-c)^n}$, \\
$g'(b) = (n-1) \left(t+ \sqrt{\frac 2 {(n-1)(n+1)}} \right) \left(\sqrt{\frac{2(n-1)}{n+1}} \right)^n \left(\sqrt{\frac{n-1}{2(n+1)}}-t \right)^{n-2} \ , $
since $\frac 1 {b-c} = \sqrt{\frac {2(n-1)} {n+1}}$. \\
For $k>2$ and $a_k = (\underbrace{b \etc b}_k , \underbrace{c \etc c}_{n+1-k})$, $b := \sqrt{\frac{n-k+1}{k(n+1)}}$, $c:= -\sqrt{\frac k {(n-k+1)(n+1)}}$ and
$t < b$ we consider $k$ equidistant points of step-size $\varepsilon$ around $b$, $b + j \varepsilon$, $j= - [\frac k 2] \etc [\frac k 2]$, to define $a(\varepsilon)$ similarly, and then \eqref{eq2.1} yields a $(k-1)$-st order $\varepsilon$-step size difference $\Delta^{(k-1)}_{\varepsilon} g$ of the function
$g(x) := \frac{(x-t)^{n-1}}{(x-c)^{n-k+1}}$, namely
$$\frac{(n-1)!}{\sqrt{n+1}} A(a^{(k)},t) = \lim_{\varepsilon \to 0} \frac 1 {(k-1)!} \frac 1 {\varepsilon^{k-1}} \Delta^{(k-1)}_{\varepsilon} g(b) = \frac 1 {(k-1)!} g^{(k-1)}(b) \ . $$
The power in the denominator of $g$ is reduced by the fact that there are $(k-1)$ products of differences of values near $b$ resulting in the $(k-1)$-st order difference. One verifies by induction that
$$\frac 1 {(k-1)!} g^{(k-1)}(x) = \binom{n-1}{k-1} (t-c)^{k-1} \frac {(x-t)^{n-k}}{(x-c)^n} \ , $$
and with $\frac 1 {b-c} = \sqrt {\frac {k(n-k+1)}{n+1}}$ we have that \\
$A(a^{(k)},t) =  \frac{\sqrt{n+1}}{(n-1)!} \binom{n-1}{k-1} \sqrt{\frac{k(n-k+1)}{n+1}}^n \left(t+\sqrt{\frac k {(n-k+1)(n+1)}} \right)^{k-1} \left(\sqrt{\frac{n-k+1}{k(n+1)}}-t \right)^{n-k} \ . $
\end{proof}

In particular,
$A(a^{(1)},t) = \frac{\sqrt{n+1}}{(n-1)!} \left(\sqrt{\frac n {n+1}} \right)^n \left(\sqrt{\frac n {n+1}} - t \right)^{n-1} \ , \ t < \sqrt{\frac n {n+1}} \ , $ \\
$A(a^{(2)},t) = \frac{\sqrt{n+1}}{(n-2)!} \left(\sqrt{\frac {2(n-1)} {n+1}} \right)^n \left(\sqrt{\frac 2 {(n-1)(n+1)}}+t \right) \left(\sqrt{\frac {n-1} {2(n+1)}} - t \right)^{n-2} \ , \ t < \sqrt{\frac {n-1} {2(n+1)}} \ . $ \\

For all $\frac c {\sqrt{n(n+1)}} < t < \sqrt{\frac n {n+1}}$, $A(a^{(1)},t) > A(a^{(2)},t)$ is true asymptotically for $c \simeq 2.6363$:
$A\left(a^{(1)},\frac c {\sqrt{n(n+1)}} \right) - A \left(a^{(2)},\frac c {\sqrt{n(n+1)}} \right) = O(\frac 1 n)$ leads to the requirement $2(\sqrt 2 + c) = \exp(1+(\sqrt 2 -1) c)$, which has only one positive solution $c \simeq 2.6363$. Let $t_n$ be the positive solution in dimension $n$ of $A(a^{(1)},t_n) = A(a^{(2)},t_n)$ less than $\sqrt{\frac{n-1}{2(n+1)}}$. Then $t_3 \simeq 0.4357$, $t_4 \simeq 0.3877$ and $t_5 \simeq 0.3426$.

\section{Prerequisites}

In this chapter we prove two results which are needed in the proof of Theorem \ref{th1} in the next chapter. The first one is used to reduce the number of different coordinates of extremal critical points of $A(\cdot,t)$, the second is a monotonicity result for $A(a,t)$.

\begin{proposition}\label{prop3.1}
Let $n \ge 4$, $\sqrt{\frac{n-2}{3(n+1)}} < t < a_2 < a_1$, $p, r, s \in \N$ and $c, d, e \in \R$ be such that $a_1+a_2+c+d+e = 0$, $a_1^2+a_2^2+\frac{c^2} p + \frac{d^2} r + \frac{e^2} s =1$, $p+r+s=n-1$. Then
$$\left(\frac{a_1-t}{a_2-t} \right)^{n-1} > \left(\frac{a_1-\frac c p}{a_2- \frac c p} \right)^{p+1} \left(\frac{a_1- \frac d r}{a_2- \frac d r} \right)^{r+1} \left(\frac{a_1- \frac e s}{a_2- \frac e s} \right)^{s+1} \ . $$
\end{proposition}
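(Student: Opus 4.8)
The plan is to take logarithms and reduce the inequality to a statement about a single auxiliary function. Write $f(x) := \log\frac{a_1-x}{a_2-x}$ for $x < a_2$; this is the natural "per-coordinate" contribution. After taking $\log$ of both sides, the claim becomes
$$(n-1)\,f(t) > (p+1) f(c/p) + (r+1) f(d/r) + (s+1) f(e/s).$$
The key structural observation is that $p+r+s = n-1$, while the right-hand side carries weights $p+1, r+1, s+1$ summing to $n+2$; so the "extra" weight $3$ on the right must be compensated. I would first record the elementary properties of $f$: it is convex and strictly increasing on $(-\infty, a_2)$, with $f(x) \to +\infty$ as $x \uparrow a_2$ and $f(x) \to 0^-$ as $x \to -\infty$, and in particular $f < 0$ on $(-\infty, a_1)$ wherever $x < a_2$ — wait, more carefully $f(x) = \log\frac{a_1-x}{a_2-x}$ and since $a_1 > a_2$ we have $\frac{a_1-x}{a_2-x} > 1$ when $x < a_2$, so $f > 0$ there. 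Thus every term is positive and the inequality is a genuine competition between the factor $n-1$ and the larger total weight on the right.

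The main idea is a two-step reduction. \emph{Step 1: push the three interior values together.} Using the constraints $a_1+a_2+c+d+e=0$ and $a_1^2+a_2^2+c^2/p+d^2/r+e^2/s=1$ with $a_1, a_2$ fixed, the admissible triples $(c/p, d/r, e/s)$ — equivalently the weighted mean and weighted second moment of the interior coordinates — form a one-parameter family once $a_1,a_2$ are fixed. I would argue that the right-hand side $\sum (p+1) f(c/p)$, as a function along this family, is maximized when the three values $c/p, d/r, e/s$ coincide; this is where convexity of $f$ enters, via a Jensen-type / majorization argument respecting the weights. (One must be careful: the weights in the sum are $p+1$ etc., not $p$ etc., so the argument is not a bare application of Jensen; one compares the configuration to the one where all interior coordinates equal their common weighted mean $m := (c+d+e)/(n-1) = -(a_1+a_2)/(n-1)$, and shows moving mass toward equality increases $\sum(p+1)f(\cdot)$ by convexity plus monotonicity.) \emph{Step 2: the collapsed inequality.} Once all interior values equal $m$, the right-hand side becomes $(n+2) f(m)$ with $m = -(a_1+a_2)/(n-1)$, and the second-moment constraint forces $a_1^2+a_2^2 + (n-1)m^2 = 1$, i.e. relates $a_1, a_2$. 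The inequality to prove is then the one-dimensional statement $(n-1) f(t) > (n+2) f\big(-(a_1+a_2)/(n-1)\big)$, for $t$ in the stated range and $(a_1,a_2)$ constrained by Lemma~\ref{lem1}(ii) and the moment identity. This I would attack by noting that $f$ is increasing, that $m < t$ in the relevant regime (needs checking: $m = -(a_1+a_2)/(n-1)$ versus $t > \sqrt{(n-2)/(3(n+1))}$), so $f(m) < f(t)$, and then it remains to beat the weight ratio $(n+2)/(n-1)$, i.e. to show $f(t)/f(m) > (n+2)/(n-1)$, which should follow from a quantitative comparison using $f(t) - f(m)$ and the explicit bound on $a_1+a_2$ from part (ii) of Lemma~\ref{lem1}. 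The hypothesis $n \ge 4$ and the precise lower cutoff $\sqrt{(n-2)/(3(n+1))}$ on $t$ enter exactly here, to make the numerical inequality go through; presumably the cutoff is sharp for this argument.

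\emph{Main obstacle.} The hard part will be Step 1 — the claim that equalizing the three interior values $c/p, d/r, e/s$ (under the two linear/quadratic constraints, with the "off" weights $p+1,r+1,s+1$) maximizes the right-hand side. Because the sum weights differ from the "natural" weights $p,r,s$ that appear in the constraints, this is not a clean majorization statement and will likely require a Lagrange-multiplier / critical-point analysis: fix $a_1,a_2$, treat $(c,d,e)$ (or the per-coordinate values) subject to the two constraints, and show that at any interior critical point of $\sum(p+1)f(\cdot/p)$ the values must coincide, using $f'' > 0$ to rule out saddle/minimum configurations and checking the boundary (where some interior coordinate hits $t$ or where two groups merge) separately. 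I also expect a secondary nuisance in verifying $m < t$ and the final scalar inequality uniformly in $n \ge 4$ and over the two-parameter region for $(a_1,a_2)$; this is routine calculus but the constants are tight, so one likely splits into the regime $a_2$ close to $t$ versus $a_2$ bounded away, and uses $\psi$ from Lemma~\ref{lem1}(ii) to control $a_1$ in terms of $a_2$.
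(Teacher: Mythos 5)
Your Step 1 is where the argument breaks, and it breaks for two independent reasons. First, the ``collapsed'' configuration is not admissible: with $a_1,a_2$ fixed, the constraints fix the weighted first moment $p\,\frac cp+r\,\frac dr+s\,\frac es=-(a_1+a_2)$ \emph{and} the weighted second moment $p(\frac cp)^2+r(\frac dr)^2+s(\frac es)^2=1-a_1^2-a_2^2$, and by Cauchy--Schwarz the all-equal point $\frac cp=\frac dr=\frac es=m:=-(a_1+a_2)/(n-1)$ has strictly smaller second moment unless the values were already equal. So ``pushing the three interior values together'' leaves the feasible set; restoring feasibility would force you to change $a_1,a_2$, which are fixed data of the inequality, and then the comparison you make in Step 2 no longer bounds the original right-hand side. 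Second, even ignoring feasibility, convexity of $f(x)=\log\frac{a_1-x}{a_2-x}$ pushes in the opposite direction from what you need: Jensen gives $\sum p\,f(\tfrac cp)\ge (n-1)f(m)$, i.e.\ equalizing \emph{decreases} such sums, and the extra weights $+1$ do not reverse this. Indeed the genuine maximizer of the right-hand side along the feasible curve (for fixed $a_1>a_2>t$) is the configuration where the two lower values coincide and the largest stays separate with multiplicity one -- exactly the configuration the paper isolates in part ii) of the proof of Theorem \ref{th1} -- not the all-equal one. So the reduction to the scalar inequality $(n-1)f(t)>(n+2)f(m)$ is not a valid upper bound for the quantity you must control, and the rest of the proposal has nothing to stand on.

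The paper's actual proof goes a different way and does not attempt any extremal reduction inside Proposition \ref{prop3.1}. It proves the quantitative ratio bounds of Lemma \ref{lem}: $\frac{a_1-t}{a_2-t}>(\frac{a_1-x}{a_2-x})^3$ whenever $x\le 0$, and $\frac{a_1-t}{a_2-t}>(\frac{a_1-x}{a_2-x})^2$ whenever $x\le g(n)=2\sqrt{\tfrac{n-2}{3(n+1)}}-\sqrt{\tfrac{n-1}{2(n+1)}}$, using the bound $a_1+a_2\le\sqrt{2(n-1)/(n+1)}$ from Lemma \ref{lem1}. It then absorbs the excess exponent $3=(p+1)+(r+1)+(s+1)-(n-1)$ by bookkeeping of the form $\frac{p+1}3+\frac{r+1}2+\dots\le n-1$, where the needed lower bounds on the multiplicities (e.g.\ $p\ge 4$ in case a), via inequality \eqref{eq3.1}) are extracted from the two constraint equations, with a case analysis on how many of $\frac dr,\frac es$ lie below $g(n)$ and a separate explicit computation for $n=4$. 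If you want to salvage a convexity-style approach, you would have to work with the correct extremal configuration (two coinciding values plus one), at which point you are essentially redoing the much more delicate analysis of Proposition \ref{prop3.2}, not simplifying it.
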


For the proof we need the

\begin{lemma}\label{lem}
Let $n \ge 4$, $\sqrt{\frac{n-2}{3(n+1)}} < t < a_2 < a_1$ and $c \in \R$. Then: \\
(i) If $c \le 0$, $\left(\frac{a_1-t}{a_2-t} \right) > \left(\frac{a_1-c}{a_2-c} \right)^3$. \\
(ii) If $c \le g(n):= 2 \sqrt{\frac{n-2}{3(n+1)}} - \sqrt{\frac{n-1}{2(n+1)}}$, $\left(\frac{a_1-t}{a_2-t} \right) > \left(\frac{a_1-c}{a_2-c} \right)^2$.
\end{lemma}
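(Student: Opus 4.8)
The plan is to reduce each of (i), (ii) to a single-variable polynomial inequality in $\sigma := a_1+a_2$ and to verify that inequality at the endpoints of the relevant range.

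\emph{Reduction in the exterior point $c$.} For fixed $a_2 < a_1$ the function $c \mapsto \frac{a_1-c}{a_2-c}$ is strictly increasing on $(-\infty,a_2)$, with derivative $\frac{a_1-a_2}{(a_2-c)^2}>0$, and it exceeds $1$ there. Hence the right-hand sides in (i), (ii) are increasing in $c$ on the admissible ranges, and — noting $g(n)<\sqrt{\frac{n-2}{3(n+1)}}<t$, which is immediate from $2(n-2)<3(n-1)$ — it suffices to prove (i) with $c=0$ and (ii) with $c=g(n)=:\gamma$.

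\emph{Reduction to one variable.} Clearing denominators (all factors are positive) and cancelling $a_1-a_2>0$, case (i) becomes $E(a_1,a_2):= t(a_1^2+a_1a_2+a_2^2)-a_1a_2(a_1+a_2)>0$ and case (ii) becomes $t(a_1+a_2)-a_1a_2>\gamma(2t-\gamma)$. In both, the left-hand side is strictly decreasing in $a_1$ (the $a_1$-derivatives are $(t-a_2)(2a_1+a_2)<0$ and $t-a_2<0$). Since in the situation of Theorem \ref{th1} $a_1\ge a_2$ are the two largest coordinates of a unit vector with vanishing coordinate sum, Lemma \ref{lem1} gives $a_1\le\psi(a_2)$ and $a_2\le t^*:=\sqrt{\frac{n-1}{2(n+1)}}$, so it is enough to treat $a_1=\psi(a_2)$, which is equivalent to $na_1^2+2a_1a_2+na_2^2=n-1$. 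With $\sigma:=a_1+a_2$, $\pi:=a_1a_2$ this gives $\pi=\frac{n\sigma^2-(n-1)}{2(n-1)}$; together with $a_2^2\le\frac12\sigma^2$ it forces $\sigma\le M:=\sqrt{\frac{2(n-1)}{n+1}}=2t^*$, while $\sigma>2t$ since $a_2>t$. Substituting, case (i) turns into the cubic inequality $P(\sigma):=n\sigma^3-(n-2)t\sigma^2-(n-1)\sigma-(n-1)t<0$ and case (ii) into the quadratic inequality $Q(\sigma):=n\sigma^2-2(n-1)t\sigma+2(n-1)\gamma(2t-\gamma)-(n-1)<0$, to be checked for $\sigma\in(2t,M]$.

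\emph{Endpoint checks.} For (i): $P(0)=-(n-1)t<0$, and $P'(0)=-(n-1)<0$ while $P'$ is a convex quadratic with exactly one positive zero, so $P$ is decreasing then increasing on $(0,\infty)$ and vanishes there only once; hence it suffices that $P(M)\le0$. Using $M^2=\frac{2(n-1)}{n+1}$ one gets the clean identity $P(M)=\frac{(n-1)^2}{n+1}(M-3t)$, and $3t>3\sqrt{\frac{n-2}{3(n+1)}}=\sqrt{\frac{3(n-2)}{n+1}}\ge M$ precisely when $n\ge4$ (i.e.\ $2(n-1)\le3(n-2)$), so $P(M)<0$ and $P<0$ on all of $(0,M]$. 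For (ii): $Q$ is a convex quadratic, so $Q<0$ on $[2t,M]$ follows from $Q(2t)<0$ and $Q(M)<0$ (note $2t<M=2t^*$ since $t<a_2\le t^*$). Here $Q(2t)=4t^2+4(n-1)\gamma t-2(n-1)\gamma^2-(n-1)$ is increasing in $t\ge0$ (using $\gamma=g(n)>0$ for $n\ge4$), so $Q(2t)<Q(2t^*)$, and with $\frac{n-1}{n+1}=2(t^*)^2$ one finds $Q(2t^*)=-2(n-1)(t^*-\gamma)^2<0$ (equality $\gamma=t^*$ would force $n=-1$). For the other endpoint, $M+2\gamma=4\sqrt{\frac{n-2}{3(n+1)}}$, whence $Q(M)=(n-1)(M-2\gamma)(\frac12(M+2\gamma)-2t)=2(n-1)(M-2\gamma)(\sqrt{\frac{n-2}{3(n+1)}}-t)<0$, since $M-2\gamma>0$ (equivalent to $6(n-1)>4(n-2)$) and $t>\sqrt{\frac{n-2}{3(n+1)}}$. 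This finishes both parts.

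The main obstacle is not conceptual but bookkeeping: getting the two reductions right — in particular the sign of the $a_1$-derivative and the correct use of the equality case $a_1=\psi(a_2)$ of Lemma \ref{lem1} to eliminate one variable — and then spotting the collapses $P(M)=\frac{(n-1)^2}{n+1}(M-3t)$, $Q(2t^*)=-2(n-1)(t^*-\gamma)^2$, and $Q(M)=2(n-1)(M-2\gamma)(\sqrt{\frac{n-2}{3(n+1)}}-t)$, which are exactly what make the endpoint checks come down to the single hypothesis $t>\sqrt{\frac{n-2}{3(n+1)}}$ together with $n\ge4$.
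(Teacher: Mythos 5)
Your proof is correct, but it runs along a genuinely different track from the paper's, even though the two start identically: the reduction to the extremal $c$ (the ratio $\frac{a_1-c}{a_2-c}$ is increasing in $c$ on $(-\infty,a_2)$ --- the paper's wording \emph{decreasing} is a slip, but it is used exactly as you use it), and your cross-multiplied forms $t(a_1^2+a_1a_2+a_2^2)>a_1a_2(a_1+a_2)$ and $t(a_1+a_2)-a_1a_2>c(2t-c)$ are precisely the paper's reformulations. From there the paper stays in the two variables $(a_1,a_2)$: for (i) it bounds the critical ratio by $\frac{a_1+a_2}{3}$ (dropping $(a_1-a_2)^2$ from the denominator) and uses $a_1+a_2\le\sqrt{\frac{2(n-1)}{n+1}}$, obtained from Lemma \ref{lem1} via the increasing function $\psi(a_2)+a_2$; for (ii) it keeps $c$ as the variable, locates the threshold $c_0=t-\sqrt{(a_1-t)(a_2-t)}$ of $h(c)=\frac{a_1a_2-c^2}{a_1+a_2-2c}$, and shows $c_0\ge 2t-\sqrt{\frac{n-1}{2(n+1)}}>g(n)$ by AM--GM. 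You instead use that both left-hand sides are decreasing in $a_1$, pass to the equality curve $a_1=\psi(a_2)$ of Lemma \ref{lem1}, and verify one-variable polynomial inequalities $P(\sigma)<0$, $Q(\sigma)<0$ on $(2t,M]$ by shape plus endpoint evaluation; I checked your identities $P(M)=\frac{(n-1)^2}{n+1}(M-3t)$, the value $-2(n-1)(t^*-\gamma)^2$ controlling $Q(2t)$, and $Q(M)=2(n-1)(M-2\gamma)\bigl(\sqrt{\tfrac{n-2}{3(n+1)}}-t\bigr)$, and they are right, so both routes rest on the same numerical facts ($3t>M$ for $n\ge4$, $g(n)<t^*$, $t>\sqrt{\frac{n-2}{3(n+1)}}$). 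The paper's version buys brevity --- part (i) needs no curve reduction and part (ii) is a three-line AM--GM argument --- while yours buys a uniform, mechanical scheme in which the roles of $M$, $g(n)$ and the hypothesis $n\ge 4$ are completely explicit. Two minor repairs: your justification of $\sigma\le M$ via $a_2^2\le\frac12\sigma^2$ is garbled --- on the curve the clean derivation is $4\pi\le\sigma^2$ combined with $\pi=\frac{n\sigma^2-(n-1)}{2(n-1)}$, or you may simply quote the bound $a_1+a_2\le\sqrt{\frac{2(n-1)}{n+1}}$ which the paper itself extracts from Lemma \ref{lem1}; and note that, like the paper, you import the hypotheses of Proposition \ref{prop3.1} (coordinates of a unit vector with vanishing sum) through Lemma \ref{lem1}, which the lemma's literal statement omits --- you state this explicitly, the paper only implicitly, and it is genuinely needed, since the inequality fails for unconstrained large $a_1$.
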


\begin{proof}
(i) Since $\left(\frac{a_1-c}{a_2-c} \right)$ is decreasing in $c$, it suffices to prove (i) for $c=0$. This means $\left(\frac{a_1-t}{a_2-t} \right) > \left(\frac{a_1}{a_2} \right)^3$ or equivalently $t (a_1^3-a_2^3) -a_1 a_2 (a_1^2-a_2^2) > 0$, which is equivalent to $t > \frac{a_1a_2(a_1+a_2)}{(a_1-a_2)^2+3a_1 a_2}$. By Lemma \ref{lem1} $a_1 \le \psi(a_2)$, hence
$a_1+a_2 \le \psi(a_2)+a_2 = \sqrt{\frac{n-1} n} \sqrt{1-\frac{n+1} n a_2^2} + \frac{n-1} n a_2 =:\gamma(a_2)$,
$\gamma'(a_2) = - \sqrt{\frac{n-1} n} \frac{n+1} n \frac{a_2} {\sqrt{1-\frac{n+1} n a_2^2}} + \frac{n-1} n \ge 0$, since $\frac{n-1} n \ge \frac{(\frac{n+1} n)^2 a_2^2}{1-\frac{n+1} n a_2^2}$ is satisfied in view of $a_2 \le \sqrt{\frac{n-1}{2(n+1)}}$ which holds by Lemma \ref{lem1}. Thus $\gamma$ is increasing in $a_2$, so that
$a_1+a_2 \le \gamma(\sqrt{\frac{n-1}{2(n+1)}}) = \sqrt{\frac{2(n-1)}{n+1}}$. Therefore for $n \ge 4$
$$\frac{a_1 a_2 (a_1+a_2)}{(a_1-a_2)^2+3 a_1 a_2} < \frac{a_1+a_2} 3 \le \frac 1 3 \sqrt{\frac{2(n-1)}{n+1}} \le \sqrt{\frac{n-2}{3(n+1)}} < t $$
is satisfied and (i) holds for $c \le 0$. \\

(ii) In view of (i), we may assume that $c>0$. The claim $\left(\frac{a_1-t}{a_2-t} \right) > \left(\frac{a_1-c}{a_2-c} \right)^2$ is equivalent to $(a_1-a_2)[(a_1+a_2-2 c) t +c^2- a_1 a_2] > 0$. Therefore we need $t > \frac{a_1a_2-c^2}{a_1+a_2-2c} =: h(c)$. Since $h'(c) = 2 \frac{(a_1-c)(a_2-c)}{(a_1+a_2-2c)^2} > 0$, if $c < a_2 < a_1$, $h$ is increasing in $[0,a_2]$. Note here that $c \le g(n) < t < a_2$. Now $c_0 = t - \sqrt{(a_1-t)(a_2-t)}$ is the solution of
$t = h(c) = \frac{a_1a_2-c^2}{(a_1+a_2-2c)}$ with $c_0 < t < a_2$. Thus for $c < c_0$, $h(c) < t$. By the arithmetic-geometric mean inequality $\sqrt{(a_1-t)(a_2-t)} \le \frac{a_1+a_2} 2 - t \le \sqrt{\frac{n-1}{2(n+1)}} - t$ and hence $c_0 = t - \sqrt{(a_1-t)(a_2-t)} \ge 2 t - \sqrt{\frac{n-1}{2(n+1)}} > 2 \sqrt{\frac{n-2}{3(n+1)}} - \sqrt{\frac{n-1}{2(n+1)}} = g(n)$. Thus, if $c \le g(n)$, $c < c_0$ and $h(c) < t$ is satisfied.
\end{proof}

\vspace{0,5cm}

{\it Proof} of Proposition \ref{prop3.1}. \\

Assume that $\frac c p \le \frac d r \le \frac e s$. Then $c < 0$. By assumption $\sqrt{\frac{n-2}{3(n+1)}} < a_2 < a_1$ and by Lemma \ref{lem1} i), $\frac e s \le \sqrt{\frac{n-2}{3(n+1)}}$. We distinguish some cases for $n \ge 4$: \\

a) If $\frac d r , \frac e s > g(n)$,
$\left(\frac{a_1-t}{a_2-t} \right) > \left(\frac{a_1- \frac c p}{a_2- \frac c p} \right)^3$, $\left(\frac{a_1-t}{a_2-t} \right) > \max \left( \left(\frac{a_1- \frac d r}{a_2- \frac d r} \right) , \left(\frac{a_1- \frac e s}{a_2- \frac e s} \right) \right)$ and the claim will follow from $\frac{p+1} 3 + r+1 +s+1 \le n-1 = p+r+s$, i.e. $p \ge 4$. This is satisfied: Since $c < 0$, $|c|=a_1+a_2+d+e \ge 2 \sqrt{\frac{n-2}{3(n+1)}} +(r+s) g(n) = 2(1+r+s) \sqrt{\frac{n-2}{3(n+1)}} - (r+s) \sqrt{\frac{n-1}{2(n+1)}}$ and $\frac{c^2} p = 1-a_1^2-a_2^2- r \left(\frac d r \right)^2 - s \left(\frac e s \right)^2 \le 1 - \frac 2 3 \frac{n-2}{n+1} - (r+s) g(n)^2$, implying
\begin{equation}\label{eq3.1}
p \ge \frac{ \left(2(1+r+s) \sqrt{\frac{n-2}{3(n+1)}} - (r+s) \sqrt{\frac{n-1}{2(n+1)}} \right)^2 }{1 - \frac 2 3 \frac{n-2}{n+1} - (r+s) g(n)^2} \ .
\end{equation}
Since $r, s \in \N$, $r+s \ge 2$. For $n=5$ this yields $p \ge 3.035$, i.e. $p \ge 4$. The right side of \eqref{eq3.1} is increasing in $r+s$ and in $n$. Thus $p \ge 4$ holds for all $n \ge 5$. For $n=4$, $p+r+s=3$ means $p=r=s=1$. But \eqref{eq3.1} yields $p>1$. Thus this case is impossible for $n=4$. \\

b) If $\frac d r \le g(n) < \frac e s$, we have $\left(\frac{a_1-t}{a_2-t} \right) > \left(\frac{a_1- \frac d r}{a_2- \frac d r} \right)^2$ and the claim will follow from
$\frac{p+1} 3 + \frac{r+1} 2 + s+1 \le n-1 = p+r+s$, i.e. $11 \le 4 p + 3 r$. This requires $p \ge 2$ or $p=1$ and $r \ge 3$. We check this similarly as above, if $d \ge 0$: Then $|c| \ge a_1+a_2+d+e \ge 2 \sqrt{\frac{n-2}{3(n+1)}} +s \ g(n)$, $\frac{c^2} p \le 1 - \frac 2 3 \frac{n-2}{n+1} - s \ g(n)^2$. This yields \eqref{eq3.1} with $r+s$ replaced by $s$. For $s=1$, $n=4$, \eqref{eq3.1} yields $p \ge 1.19$, i.e. $p \ge 2$ is satisfied, as needed. For $s>1$ or $n>4$, $p$ will be even larger. For $n=4$, this case is impossible, since $p+r+s=n-1=3$ requires $p=r=s=1$.  \\
If $d < 0$, we only need for the claim that $\frac{p+1} 3 + \frac{r+1} 3 + s+1 \le n-1 = p+r+s$, which means $p+r \ge 3$. We show that $p=r=1$ is impossible so that $p+r \ge 3$ will follow. If $p=r=1$, $0 = a_1+a_2+c+d+e \ge 2 \sqrt{\frac{n-2}{3(n+1)}} + s \ g(n) +c +d \ge 2(1+s) \sqrt{\frac{n-2}{3(n+1)}} -s \sqrt{\frac{n-1}{2(n+1)}} - \sqrt{ \frac 2 3 \frac{n+7}{n+1} } =: \gamma(s,n)$, using $c^2+d^2 \le 1 -a_1^2-a_2^2 \le \frac{n+7}{3(n+1)}$, $|c+d| \le \sqrt{ \frac 2 3 \frac{n+7}{n+1} }$. For $s=1$, $\gamma(s,n) >0$, if $n \ge 6$. For $n=5$, we need $4=p+r+s=2+s$, i.e. $s=2$. However, also $\gamma(2,5)>0$. Thus $p+r \ge 3$ is satisfied. The case $n=4$ needs a separate argument. \\

c) If $\frac d r, \frac e s \le g(n)$, we have that $\left(\frac{a_1-t}{a_2-t} \right) > \max\left( \left(\frac{a_1- \frac d r}{a_2- \frac d r} \right)^2, \left(\frac{a_1- \frac e s}{a_2- \frac e s} \right)^2 \right)$, and we need
$\frac{p+1} 3 + \frac{r+1} 2 + \frac{s+1} 2 \le n-1 = p+r+s$, i.e. $8 \le 4 p + 3 (r+s)$, which is always satisfied, also for $n=4$. \\

d) For $n=4$, case b) with $d<0$ was left open. Then \\
$d, e = -\frac{a_1+a_2+c} 2 \pm \frac 1 2 \sqrt{ 2 - 3(a_1^2+a_2^2+c^2) -2(a_1a_2+a_1c+a_2c) }$. Calculation shows
\begin{align*}
\left(\frac{a_1-d}{a_2-d} \right) \left(\frac{a_1-e}{a_2-e} \right) &= 1 + (a_1-a_2) \frac{2(a_1+a_2)+c}{(a_1+a_2+c)^2 + 2a_2^2 - a_1 c - \frac 1 2} \ , \\
\left(\frac{a_1-t}{a_2-t} \right) &= 1 + (a_1-a_2) \frac 1 {a_2-t} \ .
\end{align*}
We will prove that $\left(\frac{a_1-t}{a_2-t} \right) > \left(\frac{a_1-d}{a_2-d} \right) \left(\frac{a_1-e}{a_2-e} \right)$. The claim of Proposition \ref{prop3.1} will then follow from the  inequality $\frac 2 3 +2 < 3$ for the exponents. We have to show that
$\phi(a_1,a_2) := \frac{(a_1+a_2+c)^2 + 2 a_2^2 - a_1 c - \frac 1 2}{2(a_1+a_2)+c} > a_2 - t$. Using $a_1^2+a_2^2 \ge \frac 4 {15}$, $|c| \le \sqrt{\frac{11}{15}}$, it can be checked that $\frac{\partial \phi}{\partial a_1} = \frac{1+2(a_1^2-a_2^2)+4a_1a_2+2a_1 c -c^2}{(2(a_1+a_2)+c)^2} \ge 0$. Therefore
$\phi(a_1,a_2) \ge \phi(a_2,a_2) = \frac{(2 a_2+c)^2+2a_2^2-a_2c- \frac 1 2}{4 a_2+c} =: k(c)$. We want to minimize $k(c)$ on the interval $c \in [-1,0]$. We have $k'(c) = \frac{6 a_2^2+8 a_2 c +c^2+ \frac 1 2}{(4 a_2+c)^2} = 0$ for
$c_{\pm} = -4a_2 \pm \frac 1 2 \sqrt{40a_2^2-2}$. Only $c_+$ satisfies $c_+ \ge -1$ as needed for $a \in S^4$, and $c_+$ is the minimum of $k$. We have $k(c_+)= \sqrt{40 a_2^2-2} - 5 a_2$, hence
$\phi(a_1,a_2) \ge  \sqrt{40 a_2^2-2} - 5 a_2$. It is easy to see that this is $> a_2 - \sqrt{\frac 2 {15}} > a_2 - t$.  \hfill $\Box$

\vspace{0,5cm}

The second auxiliary result concerns the monotonicity of a very special function appearing in the proof of Theorem \ref{th1} in the situation that $(n-2)$ of the coordinates of a critical point $a \in S^n$ of $A(\cdot,t)$ with fixed $a_1 > a_2 > t$ coincide.

\begin{proposition}\label{prop3.2}
Let $n \ge 4$, $\sqrt{\frac{n-2}{3(n+1)}} \le t < a_2 < a_1$. Let
$$W = W(a_1,a_2) := \sqrt{(n-1)(1-a_1^2-a_2^2)-(a_1+a_2)^2} \ , $$
$$F = F(a_1,a_2) := \frac{(a_1-t)^{n-1}} { \left(a_1+\frac{a_1+a_2}{n-1} + \frac 1 {\sqrt{n-2} (n-1)} W \right)^{n-2} \left(a_1+\frac{a_1+a_2}{n-1} - \frac {\sqrt{n-2}}{n-1} W \right) } \ , $$
$$G = G(a_1,a_2) := \frac{(a_2-t)^{n-1}} { \left(a_2+\frac{a_1+a_2}{n-1} + \frac 1 {\sqrt{n-2} (n-1)} W \right)^{n-2} \left(a_2+\frac{a_1+a_2}{n-1} - \frac {\sqrt{n-2}}{n-1} W \right) } \ , $$
and $f = f(a_1,a_2) := \frac{F(a_1,a_2) - G(a_1,a_2)}{a_1-a_2}$. Then for fixed $a_1$, $f$ is strictly increasing in $a_2$.
\end{proposition}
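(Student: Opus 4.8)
The plan is to prove $\partial f/\partial a_2>0$ on the region $\mathcal R:=\{(a_1,a_2):\ \sqrt{\frac{n-2}{3(n+1)}}\le t<a_2<a_1,\ W(a_1,a_2)^2>0\}$, with $a_1$ held fixed.

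First I would rewrite $f$ in a symmetric way. Put $m=\frac{a_1+a_2}{n-1}$, $\sigma=\frac{W}{\sqrt{n-2}\,(n-1)}$, $w_1:=-m-\sigma$ and $w_2:=-m+(n-2)\sigma$; from the definition of $W$ one checks $(n-2)w_1+w_2=-(a_1+a_2)$, $(n-2)w_1^2+w_2^2=1-a_1^2-a_2^2$, and $a_i-w_1=a_i+m+\sigma$, $a_i-w_2=a_i+m-(n-2)\sigma$. Hence, with $\Phi(x)=\Phi_{a_1,a_2}(x):=\frac{(x-t)^{n-1}}{(x-w_1)^{n-2}(x-w_2)}$, one has $F=\Phi(a_1)$ and $G=\Phi(a_2)$, so $f=\frac{\Phi(a_1)-\Phi(a_2)}{a_1-a_2}$; the decisive point is that $w_1,w_2$ (equivalently $m,\sigma,W$) depend on $(a_1,a_2)$ only through the symmetric quantities $a_1+a_2$ and $a_1^2+a_2^2$. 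As preliminaries I would record: (i) $w_1<w_2<t$ on $\mathcal R$ — the inequality $w_2<t$ reduces to $(n-2)(1-a_1^2-a_2^2)<(n-1)t^2+2t(a_1+a_2)+(a_1+a_2)^2$, whose left side decreases and right side increases in $a_1,a_2$ and which becomes an equality exactly at $a_1=a_2=t=\sqrt{(n-2)/(3(n+1))}$ — so every factor $x-w_i$ is positive for $x\ge a_2$, $\Phi>0$, and, writing $\Phi=\prod_i\frac{x-t}{x-w_i}$ as a product of factors each of which is positive and increasing on $(t,\infty)$, $\Phi'>0$ on $[a_2,a_1]$, whence $f>0$; (ii) from $\partial_{a_2}W=-(a_1+na_2)/W$ one gets $\partial_{a_2}w_2<0$ and $\partial_{a_2}w_1>0$, the latter since $(a_1+na_2)^2-(n-2)W^2=(n-1)\big((n-1)a_1^2+2na_2^2+4a_1a_2-(n-2)\big)>0$ (the bracket increases in $a_1,a_2$ and vanishes at $a_1=a_2=\sqrt{(n-2)/(3(n+1))}\le t<a_2$), while $(n-2)\partial_{a_2}w_1+\partial_{a_2}w_2=-1$.

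Since $\Phi$ depends on $a_2$ only through $w_1,w_2$, and $x_\tau:=(1-\tau)a_2+\tau a_1$ has $\partial_{a_2}x_\tau=1-\tau$, writing $f=\int_0^1\Phi'(x_\tau)\,d\tau$ and differentiating under the integral sign gives
$$\frac{\partial f}{\partial a_2}=\int_0^1\Big((1-\tau)\,\Phi''(x_\tau)+\Psi(x_\tau)\Big)\,d\tau ,$$
where $\Psi(x)$ denotes the $a_2$-derivative of $\Phi'(x)$ at fixed $x$ (through $w_1,w_2$). As $x_\tau$ runs through $[a_2,a_1]$ and $1-\tau\in[0,1]$, it suffices to show for all $x\in[a_2,a_1]$: \textbf{(A)} $\Phi''(x)\ge 0$ (with strict inequality somewhere), and \textbf{(B)} $\Psi(x)\ge 0$. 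For both I would pass to the positive quantities $P=(x-t)^{-1}$, $Q_1=(x-w_1)^{-1}$, $Q_2=(x-w_2)^{-1}$ (so $P>Q_2>Q_1>0$) and $L:=(\log\Phi)'=(n-1)P-(n-2)Q_1-Q_2>0$. Then $\Phi''=\Phi(L^2+L')$, and (A) is the inequality $\big((n-1)P-(n-2)Q_1-Q_2\big)^2\ge (n-1)P^2-(n-2)Q_1^2-Q_2^2$; as a quadratic in $P$ its discriminant is $4(n-1)Q_2\big(2(n-2)Q_1-(n-3)Q_2\big)$, so when $2(n-2)Q_1\le(n-3)Q_2$ it holds automatically, and in the complementary regime one clears denominators to get a polynomial inequality in $(x,a_1,a_2)$, to be verified on $\mathcal R$ using Lemma \ref{lem1} ($a_1\le\psi(a_2)$, $a_2\le\sqrt{\frac{n-1}{2(n+1)}}$, $a_1+a_2\le\sqrt{\frac{2(n-1)}{n+1}}$) together with $t\ge\sqrt{\frac{n-2}{3(n+1)}}$. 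For (B), using $A:=(n-2)\partial_{a_2}w_1>0$ and $B:=\partial_{a_2}w_2=-1-A<0$, one computes $\Psi(x)=\Phi(x)\big((AQ_1+BQ_2)L-AQ_1^2-BQ_2^2\big)$, and after substituting $B=-1-A$ the bracket simplifies to $A(Q_1-Q_2)(L-Q_1-Q_2)+Q_2(Q_2-L)$; inserting the explicit value of $A$ from (ii) and clearing the positive denominators reduces (B) likewise to a polynomial inequality on $\mathcal R$.

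The heart of the matter — and the main obstacle — lies in these two polynomial positivity checks. Both inequalities degenerate to equalities at natural boundary configurations (for instance $a_1=a_2=t=\sqrt{(n-2)/(3(n+1))}$, where several of the quantities above collide, and $L^2+L'\to0$ as $x\to\infty$), so the constraints of Lemma \ref{lem1} and the lower bound on $t$ have to be used rather sharply; I expect this to force a split into subcases (e.g.\ by the sign of $2(n-2)Q_1-(n-3)Q_2$ in (A), or by the size of $a_2-t$) and a separate treatment of $n=4$, exactly in the spirit of the proof of Proposition \ref{prop3.1}. If convexity in the form (A) should fail at some $x$ near $a_1$, the fallback is to estimate $\int_0^1\big((1-\tau)\Phi''(x_\tau)+\Psi(x_\tau)\big)d\tau$ as a whole rather than term by term, exploiting that the weight $1-\tau$ is small precisely where $\Phi''$ could be negative.
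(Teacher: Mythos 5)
Your divided-difference representation $f=\int_0^1\Phi'\bigl((1-\tau)a_2+\tau a_1\bigr)\,d\tau$ and the resulting formula $\partial f/\partial a_2=\int_0^1\bigl[(1-\tau)\Phi''(x_\tau)+\Psi(x_\tau)\bigr]d\tau$ are correct, and your preliminary facts (i), (ii) check out. But the two pointwise claims on which the argument rests are not established --- you explicitly defer both as ``polynomial positivity checks'' --- and claim \textbf{(B)} is in fact false on the region of the proposition. With $A=(n-2)\partial_{a_2}w_1>0$ and $B=-1-A$, your bracket equals $A(Q_1-Q_2)(L-Q_1-Q_2)+Q_2(Q_2-L)$, and on the admissible region one typically has $Q_1<Q_2$ and $L>Q_1+Q_2$, so \emph{both} summands are negative. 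Concretely, take $n=4$, $t=0.3652\ge\sqrt{2/15}$, $a_2=0.4$, $a_1=0.67$ (then $W^2\approx 0.028>0$, $w_1\approx -0.396$, $w_2\approx -0.277$, $A\approx 5.7$): one finds $\Psi(x)<0$ for every $x\in[a_2,a_1]$, e.g.\ at $x=a_1$ the bracket is about $-9.5$. Worse, near $\tau=1$ the convexity term carries the vanishing weight $1-\tau$, so the integrand itself is negative near $\tau=1$; positivity of $\partial f/\partial a_2$ therefore cannot be obtained termwise, only from a global balance of the positive contribution near $\tau=0$ against the negative one near $\tau=1$. Your fallback remark does not address this: it is aimed at a hypothetical failure of \textbf{(A)} damped by the small weight $1-\tau$, whereas the actual failure is of \textbf{(B)}, which enters with full weight.

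That global balance is precisely the hard content of the paper's proof: inequality \eqref{eq3.2}, rewritten as \eqref{eq3.3} in the variables $x,y,z$, is controlled by the quantitative bounds of Lemma \ref{lem2} (namely $a_2-d\ge 3(a_2-t)+(a_1-a_2)$, $a_2-c\ge 5(a_2-t)+\frac{a_1-a_2}{n-1}$, $\frac{a_1-c}{a_2-c}\le\frac32$, all hinging on $t\ge\sqrt{\tfrac{n-2}{3(n+1)}}$), and then reduced by monotonicity in $y,z$ to the explicit one-variable inequalities $\phi_n(x)>0$, $\psi_n(x)>0$, $\tilde\phi_4(x)>0$, proved by expansion and induction on $n$. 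Nothing of comparable substance appears in your proposal; claim \textbf{(A)} (convexity of $\Phi$ on $[a_2,a_1]$) is plausibly true there but also left unverified. So the heart of the proposition is missing, and the strategy as stated cannot be completed without replacing the pointwise claims by a global estimate of essentially the same difficulty as the paper's argument.
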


By Lemma \ref{lem1} $a_2 < \psi(a_1)$, and this guarantees that $W$ is well-defined. For $a_2 = \psi(a_1)$, $W =W(a_1,\psi(a_1)) = 0$. $W$ decreases if $a_2$ increases. For the proof we need

\begin{lemma}\label{lem2}
Let $n \ge 4$, $\sqrt{\frac{n-2}{3(n+1)}} \le t < a_2 < a_1$ and \\
$W(a_1,a_2) := \sqrt{(n-1)(1-a_1^2-a_2^2)-(a_1+a_2)^2}$, $c := -\frac{a_1+a_2}{n-1} - \frac 1 {\sqrt{n-2} (n-1)} W(a_1,a_2)$ and $d := -\frac{a_1+a_2}{n-1} + \frac{\sqrt{n-2}}{n-1} W(a_1,a_2)$. Then : \\
(a) $a_2 - d \ge 3 (a_2-t) + (a_1-a_2)$ , \\
(b) $a_2 - c \ge 5 (a_2-t) + \frac{a_1-a_2}{n-1}$, \\
(c) $\frac{a_1-c}{a_2-c} < \frac 3 2$.
\end{lemma}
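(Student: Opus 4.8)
\medskip
\noindent\emph{Proof proposal.} Put $b := \sqrt{\frac{n-1}{2(n+1)}}$ and $S := a_1+a_2$. From Lemma~\ref{lem1} I would use the bounds $a_2 \le b$ and $S \le \sqrt{\frac{2(n-1)}{n+1}}$, alongside $a_1, a_2 > t \ge \sqrt{\frac{n-2}{3(n+1)}}$ and $W \ge 0$ (this last because $a_1 \le \psi(a_2)$). It helps to record that $(a_1, a_2, c, \dots, c, d)$, with $c$ of multiplicity $n-2$, is exactly the unit vector pinned down by $a_1 + a_2 + (n-2)c + d = 0$ and $a_1^2 + a_2^2 + (n-2)c^2 + d^2 = 1$, so that $d - c = W/\sqrt{n-2} \ge 0$. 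For each of (a)--(c) the plan is the same: isolate the $W$-term on one side, check the sign of the other side, square to remove the root, substitute $W^2 = (n-1)(1-a_1^2-a_2^2) - S^2$, and reduce to an elementary polynomial inequality that the bounds above settle.

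For (a), inserting the definition of $d$ makes the claim equivalent to $\sqrt{n-2}\,W \le 3(n-1)t - (n-2)S$. The right side is $\ge 0$ because $3(n-1)t \ge 3(n-1)\sqrt{\frac{n-2}{3(n+1)}} \ge (n-2)\sqrt{\frac{2(n-1)}{n+1}} \ge (n-2)S$, the middle step being merely $3(n-1) \ge 2(n-2)$. Squaring, substituting $W^2$, dividing by $n-1$ and writing $a_1^2 + a_2^2 = S^2 - 2a_1a_2$, the assertion becomes
\[
0 \le 9(n-1)t^2 - 6(n-2)tS + 2(n-2)S^2 - (n-2) - 2(n-2)a_1a_2 .
\]
I would then bound $a_1 a_2 \le S^2/4$ and read the right side as a quadratic in $S$ with vertex at $S = 2t$; since $a_1,a_2>t$ forces $S>2t$, on that range the quadratic is increasing, so its value is at least its value at $S = 2t$, namely $3(n+1)t^2 - (n-2) \ge 0$ by the hypothesis on $t$. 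This gives (a).

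For (b), inserting the definition of $c$ makes the claim equivalent to $W \ge \sqrt{n-2}\,\bigl(2(2n-3)a_2 - 5(n-1)t\bigr)$, and here it is enough to see that the right side is $\le 0$: from $a_2 \le b$ and $t \ge \sqrt{\frac{n-2}{3(n+1)}}$ one gets $2(2n-3)a_2 \le 2(2n-3)\sqrt{\frac{n-1}{2(n+1)}} \le 5(n-1)\sqrt{\frac{n-2}{3(n+1)}} \le 5(n-1)t$, where the middle inequality is, after squaring and clearing denominators, $6(2n-3)^2 \le 25(n-1)(n-2)$, i.e.\ $0 \le (n-4)(n+1)$, true for $n \ge 4$. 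As $W \ge 0$, (b) follows.

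For (c), the inequality $\frac{a_1-t}{a_2-t} \le \frac{3}{2}$ is equivalent to $a_1 - a_2 \le \frac{1}{2}(a_2-t)$, that is, to an upper bound $a_1 \le \frac{3}{2}a_2 - \frac{1}{2}t$ on $a_1$. The natural first move is to feed in $a_1 \le \psi(a_2) = \sqrt{\frac{n-1}{n}}\sqrt{1 - \frac{n+1}{n}a_2^2} - \frac{a_2}{n}$ from Lemma~\ref{lem1}(ii) and reduce to $\psi(a_2) \le \frac{3}{2}a_2 - \frac{1}{2}t$; isolating the square root and squaring turns this into a one-variable polynomial inequality in $a_2$, monotone in $a_2$ for $a_2 > t$, whose truth hinges on $a_2$ not being too close to $t$. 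Making this quantitative — extracting and using the lower bound on $a_2$ that the constraints actually provide, and reconciling it with the hypothesis $t \ge \sqrt{\frac{n-2}{3(n+1)}}$ — is the delicate point, and the step I expect to be the main obstacle; the arguments for (a) and (b) are, by comparison, routine squaring arguments.
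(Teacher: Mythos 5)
Your parts (a) and (b) are correct and in substance identical to the paper's own argument: in (a) both proofs reduce the claim to $\sqrt{n-2}\,W \le 3(n-1)t-(n-2)(a_1+a_2)$ and, after squaring and absorbing the mixed terms, to $3(n+1)t^2\ge n-2$, which is the hypothesis on $t$; in (b) both reduce to $2(2n-3)a_2\le 5(n-1)t$, which you verify via $25(n-1)(n-2)-6(2n-3)^2=(n-4)(n+1)\ge0$ and the paper via $t\ge\sqrt{\tfrac23\tfrac{n-2}{n-1}}\,a_2$ --- the same inequality in two guises. Two minor remarks: your explicit sign check of the right-hand side before squaring in (a) is a point the paper leaves implicit (good), and the bound $a_1+a_2\le\sqrt{2(n-1)/(n+1)}$ you attribute to Lemma \ref{lem1} is not literally stated there, but it follows in one line from $W^2\ge0$ together with $a_1^2+a_2^2\ge(a_1+a_2)^2/2$ (or from the $\gamma$-argument in the proof of Lemma \ref{lem}), so this is not a gap.

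Part (c) is where the proposal fails, and the obstacle you flag is not one you can get around: the inequality $\frac{a_1-t}{a_2-t}\le\frac32$ is false under the stated hypotheses, because nothing bounds $a_2-t$ away from $0$ while $a_1-t$ stays of fixed size. For instance, with $n=4$, $t=\sqrt{2/15}$, $a_1=0.6$ and $a_2\downarrow t$, the radicand of $W$ stays positive and $c,d\le t$, so the configuration is admissible, while the ratio blows up. The statement (c) carries a typo: what the paper's proof actually establishes, and what is used later in the proof of Proposition \ref{prop3.2}, is $\frac{a_1-c}{a_2-c}\le\frac32$. Its proof is of a different nature from your plan: by Lemma \ref{lem1}, $\rho:=\frac{a_1}{a_2}\le\frac{\psi(a_2)}{a_2}$, and $a_2>t\ge\sqrt{\frac{n-2}{3(n+1)}}$ gives $\rho\le\frac{n+1}{n}\sqrt{\frac{2(n-1)}{n-2}}-\frac1n<2$; then, since $-c\ge\frac{a_1+a_2}{n-1}\ge0$ and $x\mapsto\frac{a_1+x}{a_2+x}$ is decreasing for $x\ge0$, one gets $\frac{a_1-c}{a_2-c}\le\frac{a_1+\frac{a_1+a_2}{n-1}}{a_2+\frac{a_1+a_2}{n-1}}=\frac{n\rho+1}{n+\rho}$, which is $\le\frac32$ for all $n\ge4$ (for $n=4$ the crude bound $\rho\le2$ suffices; for $n\ge5$ one should use the sharper $n$-dependent bound on $\rho$, since $\frac{2n+1}{n+2}>\frac32$ there). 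So for (c) you should prove this corrected statement along these lines; the squaring strategy aimed at the literal $t$-version cannot be repaired.
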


\begin{proof}
(a) Let $\delta :=a_2-t$, $\varepsilon := a_1-t$. We claim that $t-d \ge \delta + \varepsilon$. Inserting $\delta, \varepsilon$ into $t-d$, this is equivalent to
$t-d = \frac{n+1}{n-1} t + \frac{\delta + \varepsilon}{n-1} - \frac{\sqrt{n-2}}{n-1} W(a_1,a_2) \ge \delta + \varepsilon$ which means
$\sqrt{n-2} \  W(a_1,a_2) \le (n+1) t - (n-2) (\delta + \varepsilon)$. This is equivalent to
\begin{align*}
(n-2) & \left[ n-1 - 2 (n+1) (t^2 + (\delta + \varepsilon) t) - n (\delta^2 + \varepsilon^2) - 2 \delta  \varepsilon \right] \\
& \le (n+1)^2 t^2 - 2 (n-2) (n+1) (\delta + \varepsilon) t +(n-2)^2 (\delta + \varepsilon)^2 \ .
\end{align*}
The terms with $(\delta + \varepsilon) t$ on both sides cancel, and the inequality is clearly satisfied if
$(n-2)(n-1) \le t^2 ((n+1)^2 + 2 (n-2) (n+1)) = t^2 \ 3 (n+1) (n-1)$, i.e. $t^2 \ge \frac{n-2}{3(n+1)}$, which holds by assumption on $t$. Hence
$t-d \ge a_1+a_2-2 t = 2 (a_2-t) + (a_1-a_2)$, $a_2-d = a_2-t + t-d \ge 3 (a_2-t) + (a_1-a_2)$. \\

(b) We have $t-c = t + \frac{a_1+a_2}{n-1} + \frac 1 {\sqrt{n-2} (n-1)} W(a_1,a_2) \ge t + \frac{a_1+a_2}{n-1} = t + \frac 2 {n-1} a_2 + \frac {a_1-a_2}{n-1}$.
We claim that $ t + \frac 2 {n-1} a_2 \ge 4 (a_2-t)$ holds. This is equivalent to $t \ge (\frac 4 5 - \frac 2 5 \frac 1 {n-1}) a_2$. Since
$t \ge \sqrt{\frac{n-2}{3(n+1)}} = \sqrt{\frac 2 3 \frac{n-2}{n-1} } \sqrt{\frac{n-1}{2(n+1)}} \ge \sqrt{\frac 2 3 \frac{n-2}{n-1} } \ a_2$,
$\sqrt{\frac 2 3 \frac{n-2}{n-1}} \ge \frac 4 5 - \frac 2 5 \frac 1 {n-1}$ will imply the claim, and this is true for all $n \ge 4$. Hence
$a_2 - c = a_2 - t + t - c \ge 5 (a_2-t) + \frac{a_1-a_2}{n-1}$. \\

(c) By Lemma \ref{lem1} $\frac{a_1}{a_2} \le \frac{\psi(a_2)}{a_2} = \sqrt{\frac{n-1} n} \sqrt{\frac 1 {a_2^2} - \frac{n+1} n} - \frac 1 n$. Using $a_2 \ge \sqrt{\frac{n-2}{3(n+1)}}$, we find $y:= \frac{a_1}{a_2} \le \frac{n+1} n \sqrt{\frac{n-1}{n-2}} \sqrt 2 - \frac 1 n$. Since $\frac{a_1+x}{a_2+x}$ is decreasing for $x \ge 0$ and $-c \ge 0$, $-c \ge \frac{a_1+a_2}{n-1}$, we conclude $\frac{a_1-c}{a_2-c} \le \frac{a_1+\frac{a_1+a_2}{n-1}}
{a_2+\frac{a_1+a_2}{n-1}} = \frac {ny+1}{n+y}$. We claim that $\frac {ny+1}{n+y} < \frac 3 2$. This is equivalent to $y < \frac{3n-2}{2n-3}$. Using the above estimate for $y$, it suffices to show $\sqrt{\frac{2(n-1)}{n-2}} < \frac n {n+1} \left(\frac{3n-2}{2n-3} + \frac 1 n \right) = 3 \frac{n-1}{2n-3}$, or equivalently, $2(2n-3)^2 < 9 (n-1)(n-2)$, which holds for all $n \ge 4$. Hence $\frac{a_1-c}{a_2-c} < \frac 3 2$ for all $n \ge 4$.
\end{proof}

{\it Remark.} By Lemma \ref {lem2} (a), $a_2+ \frac{a_1+a_2}{n-1} - \frac{\sqrt{n-2}}{n-1} W = a_2-d > 0$, so that the denominators of $F(a_1,a_2)$ and $G(a_1,a_2)$ in Proposition \ref{prop3.2} are positive and $F(a_1,a_2) >0$, $G(a_1,a_2) >0$. \\

\vspace{0,5cm}

{\it Proof} of Proposition \ref{prop3.2}. \\
We denote by $'$ the derivative with respect to $a_2$. Then
$$f'(a_1,a_2) = \frac{ F(a_1,a_2) ( 1 + (a_1-a_2) (\ln F)'(a_1,a_2) ) - G(a_1,a_2) ( 1 + (a_1-a_2) (\ln G)'(a_1,a_2) ) } {(a_1-a_2)^2} \ . $$
We will show that $f'(a_1,a_2) \ge 0$. This is equivalent to
$$\frac{F(a_1,a_2)}{G(a_1,a_2)} \left[ 1 + (a_1-a_2) (\ln F)'(a_1,a_2) \right] \ge 1 + (a_1-a_2) (\ln G)'(a_1,a_2)  \ . $$
We have $W'(a_1,a_2) = - \frac{n a_2 + a_1}{W(a_1,a_2)}$. Let $u := a_1 + \frac{a_1+a_2}{n-1}$, $v := a_2 + \frac{a_1+a_2}{n-1}$. Then
$$(\ln F)'(a_1,a_2) = - \left(\frac{ \frac{n-2}{n-1}-\frac{\sqrt{n-2}}{n-1} \frac{na_2+a_1} {W(a_1,a_2)} } {u+\frac 1 {\sqrt{n-2} (n-1)} W(a_1,a_2)} +
\frac{ \frac 1{n-1} + \frac{\sqrt{n-2}}{n-1} \frac{na_2+a_1} {W(a_1,a_2)} } {u - \frac {\sqrt{n-2}}{n-1} W(a_1,a_2)} \right) \ . $$
A tedious calculation with the common denominator
$$h(u):= \left(u + \frac 1 {\sqrt{n-2} (n-1)} W(a_1,a_2) \right) \left(u - \frac {\sqrt{n-2}}{n-1} W(a_1,a_2) \right)$$
shows that this is equal to $(\ln F)'(a_1,a_2) = - \frac{h(u)-h(v)}{(a_1-a_2) h(u)}$. Therefore
$$1 + (a_1-a_2) (\ln F)'(a_1,a_2) = 1 - \frac{h(u)-h(v)}{h(u)} = \frac{h(v)}{h(u)} = \left(\frac{a_2-c}{a_1-c} \right) \left(\frac{a_2-d}{a_1-d} \right) \ , $$
where $c := -\frac{a_1+a_2}{n-1} - \frac 1 {\sqrt{n-2} (n-1)} W(a_1,a_2)$, $d := -\frac{a_1+a_2}{n-1} + \frac{\sqrt{n-2}}{n-1} W(a_1,a_2)$. We have
$a_1+a_2+(n-2) c + d = 0$, $a_1^2+a_2^2+(n-2) c^2 + d^2 =1$. We get
$$\frac{F(a_1,a_2)}{G(a_1,a_2)} \left[ 1 + (a_1-a_2) (\ln F)'(a_1,a_2) \right] = \left(\frac{a_1-t}{a_2-t} \right)^{n-1} \left(\frac{a_2-c}{a_1-c} \right)^{n-1} \left(\frac{a_2-d}{a_1-d} \right)^2 $$
and we have to show that this is $\ge 1 + (a_1-a_2) (\ln G)'(a_1,a_2)$. We have
$$(\ln G)'(a_1,a_2) = \frac{n-1}{a_2-t} - \frac{(n-2) \frac n {n-1} - \frac{\sqrt{n-2}}{n-1} \frac{n a_2 +a_1}{W(a_1,a_2)} } {y + \frac 1 {\sqrt{n-2} (n-1)} W(a_1,a_2)} - \frac{ \frac n {n-1} + \frac{\sqrt{n-2}}{n-1} \frac{n a_2 +a_1}{W(a_1,a_2)} } {y - \frac{\sqrt{n-2}}{n-1} W(a_1,a_2)} \ . $$
Using that $c - d = - \frac 1 {\sqrt{n-2}} W(a_1,a_2)$, $a_2-c = y + \frac 1 {\sqrt{n-2} (n-1)} W(a_1,a_2)$ and $a_2-d = y - \frac{\sqrt{n-2}} {n-1} W(a_1,a_2)$, we find after some calculation
$$(\ln G)'(a_1,a_2) = (n-1) \left(\frac 1 {a_2-t} - \frac 1 {a_2-c} \right) - \frac 2 {a_2-d} \ . $$
We have to show
\begin{align}\label{eq3.2}
\left(\frac{a_1-t}{a_2-t} \right)^{n-1} & > \left(\frac{a_1-c}{a_2-c} \right)^{n-1} \left(\frac{a_1-d}{a_2-d} \right)^2 \ \times \\
 & \times \left(1+ (a_1-a_2) \left[(n-1) (\frac 1 {a_2-t} - \frac 1 {a_2-c} ) - \frac 2 {a_2-d} \right] \right) \nonumber
\end{align}
Let $x := \frac{a_1-a_2}{a_2-t} > 0$, $y := \frac{a_1-a_2}{a_2-c} >0$ and $z := \frac{a_1-a_2}{a_2-d} >0$. Then we need
\begin{equation}\label{eq3.3}
(1+x)^{n-1} > (1+y)^{n-1} (1+z)^2 \left(1+ (n-1)(x-y) -2z \right) \ .
\end{equation}
By Lemma \ref{lem2}, $y \le \frac x 5$ and $z \le \frac {a_1-a_2}{3(a_2-t) + (a_1-a_2)} = \frac x {3+x}$. Now the right side of \eqref{eq3.3} is increasing in $z$ and $y$, since their derivatives satisfy
$$2 (1+z) \left[1 + (n-1)(x-y)-2 z -(1+z) \right] \ge 2(1+z) \left[\frac 4 5 (n-1) x - \frac {3 x}{3+x} \right] > 0 \ , $$
\begin{align*}
(n-1)(1+y)^{n-2} & \left[1+(n-1)(x-y)-2z -(1+y) \right] \\
& \ge (n-1) (1+y)^{n-2} \left[(n-1) \frac 4 5 x - \frac {2x}{3+x} - \frac x 5 \right] > 0 \ .
\end{align*}
Therefore \eqref{eq3.3} will be satisfied if for all $x > 0$
\begin{equation}\label{eq3.4}
\phi_n(x):= (1+x)^{n-1}- \left(1+\frac x 5 \right)^{n-1} \left(1+ \frac x {3+x} \right)^2 \left(1+ (n-1) \frac 4 5 x - \frac {2 x}{3+x} \right) > 0
\end{equation}
holds. We use induction on $n$ to show that $\phi_n(x) >0$ for all $n \ge 5$ and all $0 < x \le \frac 5 2$. For $n=5$, expansion shows that
$\phi_5(x) = \frac{x^2}{(1+\frac x 3)^3} \sum_{j=0}^6 \alpha_j x^j$, $\alpha_0 = \frac 7 5$, $\alpha_j \ge 0$ for $j=1, \cdots , 5$,
$\alpha_6 = - \frac {64}{84375} < 0$. Since $\alpha_6 (\frac 5 2)^6 = \frac 5 {27} < \frac 1 5$, $\phi_5(x) > \frac 6 5 \frac{x^2}{(1+\frac x 3)^3} > 0$. For $n \ge 5$, assume that $\phi_n(x) >0$ for all $0 < x \le \frac 5 2$. Then
\begin{align*}
\phi_{n+1}(x) - \phi_n(x) & = x (1+x)^{n-1} - x \left(1+\frac x 5 \right)^{n-1} \left(1+ \frac x {3+x} \right)^2 \left(1+ \frac 4 {25} n x - \frac 2 5 \frac x {3+x} \right) \\
 & > x \phi_n(x) + x \left(1+\frac x 5 \right)^{n-1} \left(1+\frac x {3+x} \right)^2 \left(\frac {16}{25} n x - \frac 4 3 x \right) > 0 \ .
\end{align*}
For $x \ge \frac 5 2$, $\frac x 5 \ge \frac 1 2$, we use that by Lemma \ref{lem2} (c), $\frac{a_1-c}{a_2-c} = 1+y < \frac 3 2$, $y < \frac 1 2$. Then \eqref{eq3.3} will be satisfied if
$$\Phi_n(x) := \left(\frac 2 3 (1+x) \right)^{n-1} - \left(1+ \frac x {3+x} \right)^2 \left(1+ (n-1) (x- \frac 1 2) - \frac {2 x}{3+x} \right) > 0$$
holds. For $n=5$, $\Phi_5(x) = \frac{\sum_{j=0}^7 \beta_j (x-\frac 5 2)^j}{(1+\frac x 3)^3}$ with $\beta_j > 0$ for all $j=0, \cdots , 7$, $\beta_0 > 75$. Thus $\Phi_5(x) > 0$ for all $x \ge \frac 5 2$. For $n \ge 5$ and $x \ge \frac 5 2$,
\begin{align*}
\Phi_{n+1}(x) \ge \frac 7 3 \left(\frac 2 3 (1+x) \right)^{n-1} - & \left(1+ \frac x {3+x} \right)^2 \left(1+ (n-1) (x- \frac 1 2) - \frac {2 x}{3+x} \right)  \\
& - \left(1+\frac x {3+x} \right)^2 \left(x-\frac 1 2 \right) \ .
\end{align*}
Thus $\Phi_n(x) > 0$ implies $\Phi_{n+1}(x) >0$, since for $x \ge \frac 5 2$ and $n \ge 5$ we have
$\frac 4 3 \left(\frac 2 3 (1+x) \right)^{n-1} > \frac 1 3 (1+x)^3 > \frac 1 3 (1+x)^2 (x- \frac 1 2) > (1+ \frac x 3)^2 (x- \frac 1 2) > \left(1+\frac x {3+x} \right)^2 \left(x-\frac 1 2 \right)$. Therefore we have verified \eqref{eq3.3} and \eqref{eq3.2} for all $n \ge 5$. \\
For $n=4$, we use that by Lemma \ref{lem2} we actually have a better estimate for $y$, namely $y \le \frac{a_1-a_2}{5(a_2-t) + \frac{a_1-a_2} 3 } = \frac x {5 + \frac x 3}$. Then we need only that for $x > 0$
$$\tilde{\phi_4}(x) = (1+x)^3- \left(1+\frac x {5 + \frac x 3} \right)^3 \left(1+ \frac x {3+x} \right)^2 \left(1+ 3 x - 3 \frac x {5 + \frac x 3} - \frac {2 x}{3+x} \right) > 0 \ .$$
Calculation shows that $\tilde{\phi_4}(x) = \frac{x^2}{(1+\frac x 3)^3 (1+\frac x {15})^4 } \sum_{j=0}^8 \gamma_j x^j$, $\gamma_j > 0$ for all $j=0, \cdots , 8$ with $\gamma_0 = \frac {13}{75} > \frac 1 6$. Thus $\tilde{\phi_4}(x) > \frac 1 6 \frac{x^2}{(1+\frac x 3)^3 (1+\frac x {15})^4 } > 0$ for all $x>0$. Hence inequality \eqref{eq3.3} holds for $n=4$, too.
\hfill $\Box$

\vspace{0,8cm}

\section{Proof of Theorem 1.1}

\vspace{0,5cm}

\begin{proof}
The case $\sqrt{\frac{n-1}{2(n+1)}} < t < \sqrt{\frac n {n+1}}$ is covered by Theorem 1.1 of \cite{Ko}. Hence assume that $\sqrt{\frac{n-2}{3(n+1)}} < t \le \sqrt{\frac {n-1}{2(n+1)}}$. We may assume $A(a,t)>0$. As mentioned in the Introduction, the general assumption and $A(a,t)>0$ implies that $a_1 > t$ holds. By Lemma \ref{lem1} $a_3 \le \sqrt{\frac{n-2}{3(n+1)}} < t$, $a_2 \le \sqrt{\frac {n-1}{2(n+1)}}$ and $a_1 \le \sqrt{\frac n {n+1}}$. We have to consider two cases: $a_1 > t \ge a_2 \etc a_{n+1}$ and $a_1 > a_2 > t \ge a_3 \etc a_{n+1}$, with limiting case $a_1=a_2$. In the first case, formula \eqref{eq2.1} contains only one non-zero summand, in the second case two non-zero summands. In the second case $a_2 > \sqrt{\frac{n-2}{3(n+1)}}$, and by Lemma \ref{lem1}
$a_1 \le \psi(a_2) < \sqrt{\frac 2 3}$. \\

i) Assume first that $a_1 > t \ge a_2 \etc a_{n+1}$. By continuity, \eqref{eq2.1} is also valid if some or all of the values $a_2 \etc a_{n+1}$ coincide. To find the maximum of $A(a,t) = \frac{\sqrt{n+1}}{(n-1)!} \frac{(a_1-t)^{n-1}}{\prod_{j=2}^{n+1} (a_1-a_j)}$ in $\Omega := \{ (a_1,a_2 \etc a_{n+1}) \ | \ a_1 > t \ge a_2 \etc a_{n+1} \}$ relative to the constraints $\sum_{j=1}^{n+1} a_j = 0$, $\sum_{j=1}^{n+1} a_j^2 =1$, we first look for the critical points of $A(\cdot,t)$ in the interior of $\Omega$, i.e. when $a_2 \etc a_{n+1} < t$. We show later in ii) that the maximum of $A(\cdot,t)$ is not attained on the boundary of $\Omega$. The Lagrange function
$$L(a,t) = (n-1) \ln(a_1-t) - \sum_{j=2}^{n+1} \ln(a_1-a_j) + \frac {\lambda} 2 \left( \sum_{j=1}^{n+1} a_j^2 - 1 \right) + \mu \left( \sum_{j=1}^{n+1} a_j \right)$$
gives the critical points of $A( \cdot,t)$ relative to the above constraints,
\begin{align*}
\frac{\partial L}{\partial a_1} & = \frac{n-1}{a_1-t} - \sum_{j=2}^{n+1} \frac 1 {a_1-a_j} + \lambda a_1 + \mu = 0 \ , \\
\frac{\partial L}{\partial a_l} & = \frac 1 {a_1-a_l} + \lambda a_l + \mu = 0 \ , \ l = 2 \etc n+1 \ .
\end{align*}
As shown in \cite{Ko}, this implies $\mu = - \frac{n-1}{n+1} \frac 1 {a_1-t}$, $\lambda = n - (n-1) \frac{a_1}{a_1-t}$ and that the $a_l$, $l \ge 2$ are solutions of the quadratic equation $x^2-p x - q =0$, $p = a_1 - \frac{n-1}{n+1} \frac 1 {n t - a_1}$, $q=\frac{(n+1) t - 2 a_1}{(n+1) (n t - a_1)}$. Thus there are at most two different values among the $a_l$'s, $l \ge 2$. In fact, as shown in \cite{Ko}, for $t > \frac 2 {\sqrt{n(n+1)}}$, there is only one possible solution $x_-$ since the other one $x_+$ does not satisfy $x_+ \le t$. In this case, the only critical point of $A(\cdot,t)$ is $a^{(1)} = \sqrt{\frac n {n+1}}(1,-\frac 1 n \etc -\frac 1 n)$. It is a (local) maximum by Proposition 1.3 of \cite{Ko}, and hence the absolute maximum of $A(\cdot,t)$. The condition $t > \frac 2 {\sqrt{n(n+1)}}$ is satisfied for all $n \ge 5$, since then $\sqrt{\frac{n-2}{3(n+1)}} > \frac 2 {\sqrt{n(n+1)}}$. For $n=4$, the condition might be violated, and then there might be two different values among $(a_2,a_3,a_4,a_5)$ in the case of a critical point $a$. Suppose $c:=a_2$ appears $r$ times and $a_5$ $s$ times, $r+s=4$. Then $a_1+rc+sd=0$, $a_1^2+rc^2+sd^2=1$. If $r=s=2$, $c=-\frac 1 4 \left(a_1+\sqrt{4-5a_1^2} \right) < d=-\frac 1 4 \left(a_1-\sqrt{4-5a_1^2} \right)$. Inserting these values into the equation $\frac 1 {a_1-c} + \lambda c + \mu = 0$ with the above values for $\lambda = \lambda(t)$ and $\mu = \mu(t)$ gives a formula for $t$, namely $t= \frac{5a_1^2+2}{20 a_1}$, $a_1 = 2 t + \frac 1 5 \sqrt{100 t^2 -10}$, which is $> \sqrt{\frac 4 5}$ for $t \ge 0.34$. However, $0.365 < \sqrt{\frac 2 {15}} < t$ by our assumption on $t$ for $n=4$. Thus the case $r=s=2$ is impossible. Similarly, also $r=1, s=3$ is impossible. This leaves $r=3, s=1$,
$c=-\frac 1 4 \left(a_1+\frac 1 3 \sqrt{12-15 a_1^2} \right) < d=-\frac 1 4 \left(a_1- \sqrt{12-15 a_1^2} \right)$. In this case $t = \frac{a_1(80 a_1^2+17)+3 \sqrt{12-15 a_1^2}}{40 (8a_1^2-1)} =: \phi(a_1)$. The function $\phi$ is strictly decreasing in $[\sqrt{\frac 2 {15}},\sqrt{\frac 4 5}]$.
One has $\phi(a_1) > \sqrt{\frac 3 {10}}$ for $a_1 < \sqrt{\frac 3 {10}}$, hence $\phi(a_1) = t$ has no solution $a_1 \in [\sqrt{\frac 2 {15}},\sqrt{\frac 3 {10}})$. For $\sqrt{\frac 3 {10}} \le a_1 \le \sqrt{\frac 4 5}$, one has $\phi(\sqrt{\frac 3 {10}}) = \sqrt{\frac 3 {10}}$, and $\phi$ is bijective $\phi : [\sqrt{\frac 3 {10}}, \sqrt{\frac 4 5}] \to [\frac 3 4 \frac 1 {\sqrt 5}, \sqrt{\frac 3 {10}}]$. The equation $t = \phi(a_1) = d$ has the solutions $\bar{a} = \sqrt{\frac 3 {10}} \simeq 0.548$ and $\tilde{a} = \frac{13} 2 \frac 1 {\sqrt{95}} \simeq 0.667$. For $\bar{a} < a_1 < \tilde{a}$ we have $d > t$ which is impossible. This leaves the possibility that $a_1 \in \left[\frac{13} 2 \frac 1 {\sqrt{95}},\sqrt{\frac 4 5} \right]$ with $c < d \le t$. In this case, $0.335 \simeq \frac 3 4 \frac 1 {\sqrt 5} < t < \phi(\tilde{a}) = \frac 4 {\sqrt{95}} \simeq 0.411$. The value $\sqrt{\frac 2 {15}} \simeq 0.365$ is in this range. For $a=(a_1,c,c,c,d) \in S^4$ with $t \in [\sqrt{\frac 2 {15}},\frac 4 {\sqrt{95}}]$ we have \\
$A(a,t)=A(a,\phi(a_1))=\frac{\sqrt 5}{6} \frac{27}{1000} \frac{9 a_1(260 a_1^4-128 a_1^2+17) - 3(20 a_1^4+8 a_1^2-3) \sqrt{12-15 a_1^2}}{(8a_1^2-1)^3} \ . $ \\
Comparing this with \\
$A(a^{(1)},t)=A(a^{(1)},\phi(a_1))=\frac{\sqrt 5}{6} \frac{16}{25} \left(\sqrt{\frac 4 5} - \frac{a_1(80 a_1^2+17)+3\sqrt{12-15a_1^2}}{40(8a_1^2-1)} \right)^3 \ , $ \\
(tedious) estimates show that $A(a,\phi(a_1)) < A(a^{(1)},\phi(a_1))$ for all $a_1 \in \left[\frac{13} 2 \frac 1 {\sqrt{95}}, \sqrt{\frac 4 5} \right)$, with equality for $a_1 = \sqrt{\frac 4 5}$. Thus $A(a,t) < A(a^{(1)},t)$ for all $\sqrt{\frac 2 {15}} \le t < \frac 4 {\sqrt{95}}$. In fact, for $t < \tilde{t} \simeq 0.3877$, $A(a^{(1)},t) < A(a^{(2)},t)$, with $\tilde{t} \in [\sqrt{\frac 2 {15}},\frac 4 {\sqrt{95}}]$.

\vspace{0,5cm}

ii) The boundary of $\Omega$ in i) is $\Lambda = \{ (a_1,a_3 \etc a_{n+1}) \ | \ a_1 > t = a_2 > a_3 \etc a_{n+1} \}$. Note that by Lemma \ref{lem1} i) we have $a_3 \etc a_{n+1} < \sqrt{\frac{n-2}{3(n+1)}} < t$. For $t=a_2$ we have $A(a,t) = \frac{\sqrt{n+1}}{(n-1)!} \frac{(a_1-t)^{n-2}}{\prod_{j=3}^{n+1} (a_1-a_j)}$ with constraints $\sum_{j=1,j \ne 2}^{n+1} a_j = -t$, $\sum_{j=1,j \ne 2}^{n+1} a_j^2 =1 - t^2$. The corresponding Lagrange function yields the necessary conditions for critical points of $A(\cdot,t)$ in $\Lambda$ relative to the constraints
\begin{align}\label{eq4.1a}
\frac{\partial L}{\partial a_1} & = \frac{n-2}{a_1-t} - \sum_{j=3}^{n+1} \frac 1 {a_1-a_j} + \lambda a_1 + \mu = 0 \ , \nonumber \\
\frac{\partial L}{\partial a_l} & = \frac 1 {a_1-a_l} + \lambda a_l + \mu = 0 \ , \ l = 3 \etc n+1 \ ,
\end{align}
implying for $l, k \in \{3 \etc n+1\}$
$$\frac{a_l-a_k}{(a_1-a_l)(a_1-a_k)} = \frac 1 {a_1-a_l} - \frac 1 {a_1-a_k} = \lambda (a_k-a_l) \ . $$
If there are $l \ne k$ with $a_l \ne a_k$, $\lambda = - \frac 1 {(a_1-a_l)(a_1-a_k)}$ and $\mu = - \frac{a_1-a_l-a_k}{(a_1-a_l)(a_1-a_k)}$. Therefore there are at most two different values among the $a_l$, $l \in \{3 \etc n+1\}$. We claim that there is only one value. Suppose to the contrary that $c :=a_l \ne a_k =: d$ satisfy \eqref{eq4.1a} with multiplicities $p$ and $q$, i.e. $p+q=n-1$, $a_1+p c + q d = -t$, $a_1^2 + p c^2 + q d^2 = 1 - t^2$. Inserting the values of $\lambda$ and $\mu$ into the first equation of \eqref{eq4.1a} yields
$$\frac{n-2}{a_1-t} - \frac{p+1}{a_1-c} - \frac{q+1}{a_1-d} = 0 \ , $$
or equivalently
\begin{equation}\label{eq4.1b}
F:= (n-2) (a_1-c) (a_1-d) - (a-t) [ (p+1) (a_1-d) + (q+1) (a_1-d) ] = 0 \ .
\end{equation}
However, we will show that $F>0$ holds. The quadratic equations for $c$ and $d$ imply with $W:= (n-1)(1-a_1^2-t^2)-(a_1+t)^2$, assuming that $c > d$,
$$c=-\frac{a_1+t}{n-1}+\frac 1 {n-1} \sqrt{\frac q p} \sqrt{W} \ , \ d=-\frac{a_1+t}{n-1}-\frac 1 {n-1} \sqrt{\frac p q} \sqrt{W} \ , $$
$$a_1-c = a_1 + \frac{a_1+t}{n-1}-\frac 1 {n-1} \sqrt{\frac q p} \sqrt{W} \ , a_1-d = a_1 + \frac{a_1+t}{n-1} + \frac 1 {n-1} \sqrt{\frac p q} \sqrt{W} \ . $$
Inserting these values into \eqref{eq4.1b} and using $\sqrt{\frac p q} - \sqrt{\frac q p} = \frac{p-q}{\sqrt{p q}}$, $(p+1) \sqrt{\frac p q} - (q+1) \sqrt{\frac q p} = n \frac{p-q}{\sqrt{p q}}$, calculation yields
\begin{align*}
(n-1)^2 F & = \left( (n+1) [ (2n-3) t^2 + a_1 ((n^2-3)t - n a_1) ] - (n-1)(n-2) \right) \\
& + \frac{p-q}{\sqrt{p q}} ( (n^2-2) t - n a_1 ) \sqrt W \ .
\end{align*}
Since $a_1 \le \psi(t) = \sqrt{\frac{n-1} n} \sqrt{1-\frac{n+1} n t^2} - \frac t n < 1.916 t < 2 t$ for all $n \ge 4$, $(n^2-3) t - n a_1 > 0$. Further, $a_1 ((n^2-3)t-na_1)$ is strictly increasing as a function of $a_1 \in [t,\psi(t)]$ for all $n \ge 5$, and for $n=4$ attains its strict minimum at $a_1 = t$. For $a_1 > t \ge \sqrt{\frac{n-2}{3(n+1)}}$, $W < \frac{n+1} 3$. To show $F>0$, the worst case is $p=1$, $q=n-2$, since then $\frac{p-q}{\sqrt{p q}} = - \frac{n-3}{\sqrt{n-2}}$ attains its (negative) minimum. Replacing in this case first $W$ by $\frac{n+1} 3$ and then $a_1$ by $t$, we conclude
$$(n-1)^2 F > (n-2) \left( [(n+3)(n+1) t^2 - (n-1)] - (n-3)(n+1) \sqrt{\frac{n+1}{3(n-2)}} t \right) \ . $$
The right side is increasing in $t$, its derivative being positive for $t \ge \sqrt{\frac{n-2}{3(n+1)}} =: t_0$. Note that $t_0$ depends on $n$, but for simplicity of notation we will omit the index $n$. Replacing $t$ by $\sqrt{\frac{n-2}{3(n+1)}}$, the right side becomes zero and we conclude that $F > 0$. \\
Therefore all $a_l$, $l \ge 3$ satisfying \eqref{eq4.1a} have to coincide, $a_3 = \cdots a_{n+1} = - \frac{a_1+t}{n-1}$, $W=0$, $a_1 = \psi(t)$ and for $a=(a_1,t,a_3 \etc a_3)$
$$\frac{(n-1)!}{\sqrt{n+1}} A(a,t) = \frac{(a_1-t)^{n-2}}{(a_1+\frac{a_1+t}{n-1})^{n-1}} = \left(\frac{n-1} n \right)^{n-1} \frac{(1-\frac t {\psi(t)})^{n-2}}{\psi(t) (1 + \frac 1 n \frac t {\psi(t)})^{n-1}} \ . $$
For $n \ge 4$, $\psi(t) \ge 0.6992$ and $\psi(t) (1 + \frac 1 n \frac t {\psi(t)})^{n-1} \ge (1+ \frac 1 {1.916 n})^{n-1} > 1.01 > 1$, $n \ge 4$. Since $t \ge t_0$, $\psi(t) \le \psi(t_0)$, implying
$$\frac{(n-1)!}{\sqrt{n+1}} A(a,t) < \left(\frac{n-1} n \right)^{n-1} \left( 1 - \frac t {\psi(t_0)} \right)^{n-2} \ . $$
We compare this with
$$\frac{(n-1)!}{\sqrt{n+1}} A(a^{(1)},t) = \left( \frac n {n+1} \right)^{n-\frac 1 2} \left( 1 - \sqrt{\frac{n+1} n} t \right)^{n-1} \ , $$
and want to show that $A(a,t) < A(a^{(1)},t)$ for all $t \in  [\sqrt{\frac{n-2}{3(n+1)}},\sqrt{\frac{n-1}{2(n+1)}}]$ and $n \ge 5$ and for $t \in [0.37,\sqrt{\frac 3 {10}}]$ for $n=4$. The quotient $h_n(t) = \frac{(1-\frac t {\psi(t_0)})^{n-2}}{(1- \sqrt{\frac{n+1} n} t)^{n-1}}$ is decreasing in $[\sqrt{\frac{n-2}{3(n+1)}},\sqrt{\frac{n-1}{2(n+1)}}]$, since
$$(\ln h_n)'(t) = \frac{-(n-2)(\sqrt{\frac{n+1} n} -t) + (n-1)(\psi(t_0) - t)}{(\sqrt{\frac{n+1} n} -t)(\psi(t_0) - t)} < 0 \ . $$
The last inequality holds since for $\phi(t) = -(n-2)(\sqrt{\frac{n+1} n} -t) + (n-1)(\psi(t_0) - t)$ we have $\phi'(t) = -1 < 0$, $\phi(t_0) \le - \frac 1 {20}$, $n \ge 4$. Therefore with $\sqrt{\frac{n+1} n} t_0 = \sqrt{\frac{n-2}{3n}}$
$$\frac{A(a,t)}{A(a^{(1)},t)} \le \frac{A(a,t_0)}{A(a^{(1)},t_0)} < \left( \frac{n^2-1}{n^2}\right)^{n-1} \left(\frac{n+1} n \right)^{\frac 1 2} \frac{(1-\frac{t_0}{\psi(t_0)})^{n-2}}{(1-\sqrt{\frac{n-2}{3 n}})^{n-1}} \ . $$
We have $\frac{\psi(t_0)}{t_0} = \frac{n+1} n \sqrt{2 \frac{n-1}{n-2}} - \frac 1 n < \frac{3n-2}{2n-3}$, the latter inequality being equivalent to
$\sqrt{2 \frac{n-1}{n-2}} < 3 \frac{n-1}{3n-2}$, $2(2n-3)^2 < 9 (n-1)(n-2)$, which is true for all $n \ge 4$. Hence
$$\frac{A(a,t)}{A(a^{(1)},t)} < \left( \frac{n^2-1}{n^2}\right)^{n-1} \left(\frac{n+1} n \right)^{\frac 1 2} \frac{(1- \frac{2n-3}{3n-2})^{n-2}}{(1-\sqrt{\frac{n-2}{3n}})^{n-1}} \ , $$
which is $<1$ for all $n \ge 5$, the right side being decreasing in $n$, as can be seen by taking the logarithmic derivative with respect to $n$: The base in the numerator tends to $\frac 1 3$, the one in the denominator to $1 - \frac 1 {\sqrt 3} > \frac 1 3$ for $n \to \infty$. For $n=4$, $\frac{A(a,t)}{A(a^{(1)},t)} < 1$ for all $t \ge 0.37$. For $t \in [\sqrt{\frac 2 {15}}, 0.37]$ with $\sqrt{\frac 2 {15}} \simeq 0.3651$ we have $A(a,t) < A(a^{(2)},t)$, since the formulas for $A(a,t)$ and $A(a^{(2)},t)$ show for these $t$ and $n=4$
$$A(a,t) \le A(a,\sqrt{\frac 2 {15}}) = \frac 3 {16} \sqrt 2 - \frac 3 {32} \sqrt 6 < 0.0355 < 0.0374 < A(a^{(2)},0.37) \le A(a^{(2)},t) \ . $$
Therefore the maximum of $A(\cdot,t)$ is not attained on the boundary of $\Omega$ in i). \\
Actually, for $\sqrt{\frac 2 {15}} \le t < 0.3668$ we have $A(a^{(1)},t) < A(a,t) < A(a^{(2)},t)$, when $n=4$. \\

\vspace{0,5cm}

iii) Assume next that $a_1 > a_2 > t > \sqrt{\frac{n-2}{3(n+1)}} \ge a_3 \etc a_{n+1}$. Then by Proposition \ref{prop2.1}
$$\frac{(n-1)!}{\sqrt{n+1}} A(a,t) = \frac 1 {a_1-a_2} \left( \frac{(a_1-t)^{n-1}}{\prod_{j=3}^{n+1} (a_1-a_j)} - \frac{(a_2-t)^{n-1}}{\prod_{j=3}^{n+1} (a_2-a_j)} \right) =: \frac{B(a,t)-C(a,t)}{a_1-a_2} \ . $$
By continuity, this formula also holds if some or all of the $a_j$, $j \ge 3$ coincide. Fix $a_1 > a_2 >t$ and consider $(a_1-a_2) A(a,t)$ as a function of $(a_3 \etc a_{n+1})$ with constraints $\sum_{j=3}^{n+1} a_j = -(a_1+a_2)$, $\sum_{j=3}^{n+1} a_j^2 = 1 -(a_1^2+a_2^2)$. The Lagrange function \\
$L(a,\lambda,\mu) := B(a,t) - C(a,t) + \frac \lambda 2 (\sum_{j=3}^{n+1} a_j^2 - 1 +(a_1^2+a_2^2))  + \mu (\sum_{j=3}^{n+1} a_j + (a_1+a_2)) $ \\
yields the critical points of $A$ relative to the constraints via
$$ \frac{\partial L}{\partial a_l} = \frac {B(a,t)} {a_1-a_l} - \frac {C(a,t)} {a_2-a_l} + \lambda a_l + \mu = 0 \ , \ l=3 \etc n+1 \ . $$
This implies that all coordinates $a_l$ satisfy the cubic equation in $x=a_l$
$$B(a,t) (a_2-x) - C(a,t) (a_1-x) + \lambda x (a_1-x)(a_2-x) + \mu (a_1-x)(a_2-x) = 0 \ . $$
Therefore there are at most three different values among the coordinates $a_l$, $l \ge 3$ of a critical point $a \in S^n$. Suppose these occur with multiplicities $p, r, s$, $p+r+s=n-1$, and call them ($x = a_l =$) $\frac c p$, $\frac d r$, $\frac e s$. Then $a_1+a_2+c+d+e=0$, $a_1^2+a_2^2+\frac {c^2} p + \frac {d^2} r + \frac {e^2} s = 1$ and
\begin{align*}
\frac{(n-1)!}{\sqrt{n+1}} A(a,t) &= \frac 1 {a_1-a_2} \left( \frac{(a_1-t)^{n-1}}{(a_1 - \frac c p)^p (a_1 - \frac d r)^r (a_1 - \frac e s)^s} - \frac{(a_2-t)^{n-1}}{(a_2 - \frac c p)^p (a_2 - \frac d r)^r (a_2 - \frac e s)^s} \right) \\
&=: \frac{B - C}{a_1-a_2} \ .
\end{align*}
Suppose that $\frac c p < \frac d r < \frac e s$. Then $c<0$ and by Lemma \ref{lem1} $a_1 \le \sqrt{\frac n {n+1}}$, $a_2 \le \sqrt{\frac{n-1}{2(n+1)}}$. We perturb the values $c, d, e$ by $\delta > \varepsilon := \theta \delta > 0$, $\theta := \frac{\frac d r - \frac c p}{\frac e s-\frac c p} \in (0,1)$ in the following way: $c' = c + (\delta - \varepsilon)$, $d' = d -\delta$, $e' = e+\varepsilon$. Then $a_1+a_2+c'+d'+e' =0$, $a_1^2+a_2^2+\frac{c'^2} p + \frac{d'^2} r + \frac{e'^2} s = 1 + O(\delta^2)$, since $\frac c p (\delta - \varepsilon) - \frac d r \delta + \frac e s \varepsilon = 0$.
These values are independent of $a_1$. Calculation shows that
$$g_{a_1}(\delta) := \frac 1 {(a_1 - \frac c p - (1-\theta) \frac \delta p)^p} \frac 1 {(a_1 - \frac d r +  \frac \delta r)^r} \frac 1 {(a_1 - \frac e s -  \theta \frac \delta s)^s}$$
satisfies
$$g_{a_1}(\delta) = g_{a_1}(0) + \frac{(\frac e s- \frac d r)(\frac d r- \frac c p)}{(a_1 - \frac c p)^{p+1}(a_1 - \frac d r)^{r+1}(a_1 - \frac e s)^{s+1}} \delta + O(\delta^2) \ , $$
where the terms in the numerator are positive. (The value of $\varepsilon$ could be modified $\varepsilon = \theta \delta + O(\delta^2)$ to guarantee that $a_1^2+a_2^2+\frac{c'^2} p + \frac{d'^2} r + \frac{e'^2} s = 1$ holds precisely. The additional second order perturbation yields the same result for $g_{a_1}(\delta)$.)  Let $B(\delta) := (a_1-t)^{n-1} g_{a_1}(\delta)$ and $C(\delta) := (a_2-t)^{n-1} g_{a_2}(\delta)$. We claim that the first order perturbation of $B$, $B(\delta)-B(0)$, is bigger than the one of $C$, $C(\delta)- C(0)$, so that $B(\delta) - C(\delta) > B(0) - C(0)$, as $\delta \to 0$ (up to $O(\delta^2)$). This is equivalent to the inequality
$(a_1-t)^{n-1} g_{a_1}(\delta) > (a_2-t)^{n-1} g_{a_2}(\delta)$ for small $\delta$, i.e.
$$\left(\frac{a_1-t}{a_2-t} \right)^{n-1} > \left(\frac{a_1- \frac c p}{a_2- \frac c p} \right)^{p+1} \left(\frac{a_1- \frac d r}{a_2- \frac d r} \right)^{r+1} \left(\frac{a_1- \frac e s}{a_2- \frac e s} \right)^{s+1} \ , $$
$p+r+s=n-1$. By Proposition \ref{prop3.1} this inequality is true. Therefore, fixing $a_1 > a_2 >t$, $A(a,t)$ increases if $c$ is replaced by $c+\delta_1$, $d$ by $d - \delta_2$ and $e$ by $e+\delta_3$ for small $\delta_1, \delta_2, \delta_3 >0$ depending on each other. This holds independently of $t$. Therefore the maximum of $A(a,t)$ relative to fixed $a_1>a_2>t$ occurs for $\frac c p = \frac d r < \frac e s$. Further, we claim that in the maximum case, the multiplicity of $\frac e s$ should be $s=1$. If $s \ge 2$, the vector $a$ would have $p$ coordinates $\frac c p$ and $s$ coordinates $\frac e s$ with $\frac c p < \frac e s$, $p+s=n-1$, $c+e = -(a_1+a_2)$, $\frac{c^2} p + \frac {e^2} s = 1 - a_1^2 -a_2^2$. Define a perturbed vector $a'$ having $p$ coordinates $\frac{c'} p = \frac{c+\varepsilon(\delta)} p$, $s-1$ coordinates $\frac{d'}{s-1} = \frac e s - \frac{\delta}{s-1}$ and $1$ coordinate $e' = \frac e s + \delta-\varepsilon(\delta)$, where $\varepsilon(\delta) = \frac 1 2 \frac s {s-1} \frac{\delta^2}{\frac e s - \frac c p}$ for small $\delta > 0$, $0 < \varepsilon(\delta) < \delta$. Then $c'+d'+e' = c + e = -(a_1+a_2)$ and $\frac{c'^2} p + \frac{d'^2}{s-1} + e'^2 = \frac{c^2} p + \frac{e^2} s + O(\delta^3) = 1 -a_1^2-a_2^2 + O(\delta^3)$. Let
$$g_{a_1}(\delta) := \frac 1 {(a_1 - \frac c p -  \frac {\varepsilon(\delta)} p)^p} \frac 1 {(a_1 - \frac e s +  \frac \delta {s-1})^{s-1}} \frac 1 {(a_1 - \frac e s - \delta + \varepsilon(\delta))^s} \ . $$
Calculation shows that
$$g_{a_1}(\delta) - g_{a_1}(0) = \frac 1 2 \frac{\frac e s - \frac c p}{(a_1-\frac c p)^{p+1}(a_1-\frac e s)^{s+2}} + O(\delta^3) \ , $$
with $\frac e s - \frac c p > 0$ being independent of $a_1$. We remark that if $\varepsilon(\delta)$ is chosen as the solution of the quadratic equation implied by requiring that $a'$ satisfies the constraints exactly, and not only up to $O(\delta^3)$, the same is true. The given value $\varepsilon(\delta)$ is an $O(\delta^2)$-approximation of this solution. Again we have $B(\delta)-C(\delta) > B(0)-C(0)$, i.e. $A(a',t) > A(a,t)$, since
$$\left(\frac{a_1-t}{a_2-t} \right)^{n-1} > \left(\frac{a_1-\frac c p}{a_2-\frac c p} \right)^{p+1} \left(\frac{a_1-\frac e s}{a_2- \frac e s} \right)^{s+2} \ . $$
The last inequality follows from Proposition \ref{prop3.1} with the choice $r=s-1$, $s=1$, $c+(s-1)\frac e s + \frac e s = - (a_1+a_2)$ and
$p (\frac c p)^2 + (s-1) (\frac e s)^2 + (\frac e s)^2 = \frac {c^2} p + \frac{e^2} s = 1-a_1^2-a_2^2$. The vector $a'$ again has three different coordinates, the largest one of multiplicity $1$. Applying the first case then shows that in the maximum case the two smaller coordinates should coincide. \\
For $e$ of multiplicity $1$ and $\frac c p$ of multiplicity $n-2$ we have
$$\frac{(n-1)!}{\sqrt{n+1}} A(a,t) = \frac 1 {a_1-a_2} \left( \frac{(a_1-t)^{n-1}}{(a_1 - \frac c {n-2})^{n-2} (a_1 - e)} - \frac{(a_2-t)^{n-1}}{(a_2 - \frac c {n-2})^{n-2} (a_2 - e)} \right) \ , $$
with $a_1+a_2+c+e = 0$, $a_1^2+a_2^2+ \frac {c^2}{n-2} + e^2 = 1$. This yields $c = -\frac{n-2}{n-1} (a_1+a_2) - \frac{\sqrt{n-2}}{n-1} W$, $e= -\frac 1 {n-1} (a_1+a_2) + \frac{\sqrt{n-2}}{n-1} W$, where $W = \sqrt{(n-1)(1-a_1^2-a_2^2)-(a_1+a_2)^2}$, with $\frac c {n-2} < e$, $c<0$. The square root $W$ is well-defined, since by Lemma \ref{lem1} $a_2 \le \psi(a_1) := \sqrt{\frac{n-1} n} \sqrt{1 - \frac{n+1} n a_1^2} - \frac {a_1} n$ or equivalently $a_1 \le \psi(a_2)$. In fact, $\psi$ is strictly decreasing and $\psi : [\sqrt{\frac{n-2}{3(n+1)}},\psi(\sqrt{\frac{n-2}{3(n+1)}})] \to [\sqrt{\frac{n-2}{3(n+1)}},\psi(\sqrt{\frac{n-2}{3(n+1)}})]$ is bijective with $\psi^{-1} = \psi$ and $\psi(\sqrt{\frac{n-2}{3(n+1)}}) = \sqrt{\frac 2 3} \frac{\sqrt{n^2-1}} n - \frac 1 n \sqrt{\frac{n-2}{3(n+1)}} < \sqrt{\frac 2 3}$. Further, $\psi(\sqrt{\frac{n-1}{2(n+1)}}) = \sqrt{\frac{n-1}{2(n+1)}}$. \\
Inserting the values of $c$ and $e$, we have that
\begin{align}\label{eq4.1}
\frac{(n-1)!}{\sqrt{n+1}} A(a,t) & = \frac 1 {a_1-a_2} \Big( \frac{(a_1-t)^{n-1}}{(a_1 + \frac{a_1+a_2}{n-1} + \frac 1 {\sqrt{n-2} (n-1)} W)^{n-2} (a_1 + \frac{a_1+a_2}{n-1} - \frac{\sqrt{n-2}}{n-1} W) } \\
& \quad \quad \quad \quad - \frac{(a_2-t)^{n-1}}{(a_2 + \frac{a_1+a_2}{n-1} + \frac 1 {\sqrt{n-2} (n-1)} W)^{n-2} (a_2 + \frac{a_1+a_2}{n-1} - \frac{\sqrt{n-2}}{n-1} W) } \Big)  \nonumber \\
& =: \frac 1 {a_1-a_2} \left(B(a_1,a_2)-C(a_1,a_2) \right) \ . \nonumber
\end{align}
By Proposition \ref{prop3.2} $\frac {B(a_1,a_2)-C(a_1,a_2)}{a_1-a_2}$ is increasing in $a_2$. Thus the maximum of $A(a,t)$ occurs either for $a_2 \to a_1$, if $a_1 \le \sqrt{\frac{n-1}{2(n+1)}}$ or for $a_2 = \psi(a_1)$, if $a_1 > \sqrt{\frac{n-1}{2(n+1)}}$. Recall here that $\sqrt{\frac{n-2}{3(n+1)}} < a_2 \le \sqrt{\frac{n-1}{2(n+1)}}$, so that in the latter situation $a_2 = a_1$ is impossible. \\

\vspace{0,5cm}

iv) We first consider the subcase of iii) that $t < a_1 \le \sqrt{\frac{n-1}{2(n+1)}}$ and $\sqrt{\frac{n-2}{3(n+1)}} < a_2$. Then the limit $a_2 \to a_1$ can be taken in \eqref{eq4.1} using l'Hospital's rule, yielding with $V(n,a_1) := \sqrt{n-1-2(n+1) a_1^2}$ and $\tilde{a} = (a_1,a_1,c(a_1), \cdots , c(a_1),d(a_1))$,
$c(a_1) = - \frac{2a_1}{n-1} - \frac{V(n,a_1)}{\sqrt{n-2} (n+1)}$ and $d(a_1) = - \frac{2a_1}{n-1} + \frac{\sqrt{n-2} V(n,a_1)}{n+1}$ that
\begin{align}\label{eq4.2a}
& (\frac{n+1}{n-1})^n \frac{(n-1)!}{\sqrt{n+1}} A(\tilde{a},t) = \\
& = \frac{(a_1-t)^{n-2}} {\left(a_1+\frac 1 {\sqrt{n-2} (n+1)} V(n,a_1) \right)^{n-1} \left(a_1 - \frac{\sqrt{n-2}}{n+1} V(n,a_1) \right)^2}  M(n,t,a_1) = : f(n,t,a_1) \ , \nonumber
\end{align}
where $ M(n,a_1) := 4 a_1^2 + (n-1) a_1 t - \frac{n-1}{n+1} - \frac{n-3}{\sqrt{n-2} (n+1)} [2 a_1 + (n-1) t] V(n,a_1)$.
We claim that $f(n,t,a_1)$ is strictly increasing as a function of $a_1$. Let
\begin{align*}
f_1(n,t,a_1) & := \frac{(a_1-t)^{n-3}}{\left(a_1+\frac 1 {\sqrt{n-2} (n+1)} V(n,a_1) \right)^{n-2} \left(a_1 - \frac{\sqrt{n-2}}{n+1} V(n,a_1) \right)} \\
f_2(n,t,a_1) & := \frac{(a_1-t)}{\left(a_1+\frac 1 {\sqrt{n-2} (n+1)} V(n,a_1) \right) \left(a_1 - \frac{\sqrt{n-2}}{n+1} V(n,a_1) \right)} M(n,t,a_1) \ .
\end{align*}
Then $f(n,t,a_1) = f_1(n,t,a_1) f_2(n,t,a_2)$. We will show that for $n \ge 5$ the functions $f_1$ and $f_2$ are both increasing in $a_1$. For $n=4$ we have to multiply $f_1$ by $\sqrt{a_1-t}$ and divide $f_2$ by $\sqrt{a_1-t}$ to get the same result. Since $f_1, f_2 > 0$, it suffices to prove that
$(\ln f_i)'(n,t,a_1) >0$ for $a_1 > t$ and $i=1,2$, where $'$ denotes the derivative with respect to $a_1$. As for $f_1$ we have, using $V'(n,a_1) = - \frac{2 (n+1) a_1}{V(n,a_1)}$,
\begin{align*}
(\ln f_1)'(n,t,a_1) & = \frac{n-3}{a_1-t} - \frac{n-2 - \frac{2a_1 \sqrt{n-2}}{V(n,a_1)}}{a_1+\frac{V(n,a_1)}{\sqrt{n-2} (n+1)}} - \frac{1 + \frac{2a_1 \sqrt{n-2}}{V(n,a_1)}}{a_1-\frac{\sqrt{n-2} V(n,a_1)}{n+1}} \\
& = \frac{n-3}{a_1-t} - \frac{(n-1) A}{a_1 A - \frac{n-1}{n+1}} \ , \ A:= (n+3)a_1 - \frac{n-3}{\sqrt{n-2}} V(n,a_1) \ .
\end{align*}
This is positive if and only if
$$1 < \frac{n+1}{n-3} \left(t - \frac{2 a_1}{n-1} \right) A = \frac{n+1}{n-3} \left(t - \frac{2 a_1}{n-1} \right) \left((n+3)a_1 - \frac{n-3}{\sqrt{n-2}} V(n,a_1) \right) \ .$$
Assume first that $n \ge 5$ holds. For the last inequality it suffices to verify
$$1 <  \frac{n+1}{n-3} \left(t - \frac{2 a_1}{n-1} \right) \left((n+3)a_1 - \frac{n-3}{\sqrt{n-2}} V(n,a_1) \right) =:g_n(a_1) \ $$
for all $a_1 \in (\sqrt{\frac{n-2}{3(n+1)}},\sqrt{\frac{n-1}{2(n+1)}}] =: I_n$, since in this subcase $a_1 \le \sqrt{\frac{n-1}{2(n+1)}}$ is assumed. We have $g_n(\sqrt{\frac{n-2}{3(n+1)}}) = 1$ and we will show for all $n \ge 5$ that $g_n'(a_1) > 0$ for all $a_1 \in I_n$.  We find
\begin{align}\label{eq4.2b}
\frac{g_n'(a_1)}{n+3} & = \left(\sqrt{\frac{n-2}{3(n+1)}} - \frac {4 a_1}{n-1} \right)  \nonumber \\
& + \frac{2(n-3)}{(n+3) \sqrt{n-2}} \left[ \frac{(n+1) a_1}{V(n,a_1)} \left(\sqrt{\frac{n-2}{3(n+1)}} - \frac {2 a_1}{n-1} \right) + \frac{V(n,a_1)}{n-1} \right] \ .
\end{align}
Since $\sqrt{\frac{n-2}{3(n+1)}} - \frac {4 a_1}{n-1}  \ge \sqrt{\frac{n-2}{3(n+1)}} - 2 \sqrt{\frac 2 {n^2-1}} =: k(n)  > 0$ if and only if $(n-1)(n-2) > 24$, i.e. $n \ge 7$, $g_n'(a_1) > 0 $ holds for all $n \ge 7$, since the bracket $[ \cdot ]$ is always positive. For $n=5, 6$, $k(5) = \frac 1 {\sqrt 6} - \frac 1 {\sqrt 3} \simeq -0.169$, $k(6) = 2 (\frac 1 {\sqrt{21}} - \sqrt{\frac 2 {35}} ) \simeq - 0.0417$. For $n=5,6$ we have to estimate $[ \cdot ]$ from below. Easy estimates show that \newline $a_1 (\sqrt{\frac{n-2}{3(n+1)}} - \frac {2 a_1}{n-1} ) \ge \alpha_n$, where $\alpha_5 = \frac{\sqrt 2 - 1} 6$, $\alpha_6 = \frac 4 {35}$ for all $a_1 \in I_n$. Therefore the bracket $[ \cdot ]$ in \eqref{eq4.2b} is $\ge \phi(V(n,a_1))$, where $\phi(s) := \frac A s + B s$, $s>0$, with $A = (n+1) \alpha_n$ and $B = \frac 1 {n-1}$. The minimum of $\phi$ in $(0,\infty)$ is $2 \sqrt{A B} = 2 \sqrt{\frac{n+1}{n-1} \alpha_n}$, attained at $s = \sqrt{\frac A B}$. Hence for $n=5, 6$, $\frac{g_n'(a_1)}{n+3} \ge \frac{2(n-3)}{(n+3) \sqrt{n-2}} 2 \sqrt{\frac{n+1}{n-1} \alpha_n}$. Inserting the values for $n, \alpha_n$ yields that $g_5'(a_1) > \frac 2 {15} > 0$ and $g_6'(a_1) > 2 > 0$. Thus $g_n(a_1) > 1$ for $a_1 \in I_n$ and hence $f_1$ is strictly increasing for $n \ge 5$. Similarly for $n=4$, $\sqrt{a_1 - t} \ f_1(n,t,a_1)$ is increasing in $a_1$. Concerning $f_2$ we have
\begin{align*}
N(n,t,a_1) & := M'(n,t,a_1) = 8 a_1 + (n-1) t  \\
& + 2 \frac{n-3}{\sqrt{n-2} \ V(n,a_1)} \left(4 a_1^2+ (n-1) a_1 t - \frac{n-1}{n+1} \right) \ , \\
M_2(n,t,a_1) & := (n+1) \left(a_1 + \frac {V(n,a_1)} {\sqrt{n-2} \ (n+1)} \right) \left(a_1 - \frac{\sqrt{n-2} \ V(n,a_1)} {n+1} \right) \\
& = (n+3) a_1^2 - \frac{n-3}{\sqrt{n-2}} a_1 V(n,a_1) - \frac{n-1}{n+1} \ .
\end{align*}
We get
\begin{align*}
(\ln f_2)'& (n,t,a_1) = \frac 1 {a_1-t} - \frac{2(n+3)a_1-\frac{n-3}{\sqrt{n-2}}V(n,a_1) + 2 \frac{(n-3)(n+1)}{\sqrt{n-2}} \frac{a_1^2}{V(n,a_1)} } {M_2(n,t,a_1)} + \frac{N(n,t,a_1)}{M(n,t,a_1)}  \\
& = \left(\frac 1 {a_1 - t} - \frac{2(n+3)a_1-\frac{n-3}{\sqrt{n-2}}V(n,a_1)}{M_2(n,t,a_1)} \right) + \left(\frac{N(n,t,a_1)}{M(n,t,a_1)} - \frac{N_2(n,t,a_1)}{M_2(n,t,a_1)} \right) \ ,
\end{align*}
with $N_2(n,t,a_1) := 2 \frac{(n-3)(n+1)}{\sqrt{n-2}} \frac{a_1^2}{V(n,a_1)}$. We claim that both terms in the brackets are positive. Calculation shows that the first term in brackets is positive if and only if $ t > \frac{(n+3) a_1^2 + \frac{n-1}{n+1}}{2(n+3)a_1 - \frac{n-3}{\sqrt{n-2}} V(n,a_1)} =: H_n(a_1) $. We have $H_n(\sqrt{\frac{n-2}{3(n+1)}}) = \sqrt{\frac{n-2}{3(n+1)}}$ and claim that for all $n \ge 5$, the function $H_n$ is strictly decreasing in $I_n$. Then for $a_1 > t$, $H_n(a_1) < \sqrt{\frac{n-2}{3(n+1)}} \le t$ will be satisfied. As for $H_n'$, we find with $\frac{n-1}{n+1} \ge 2 a_1^2$
\begin{align*}
(\ln H_n)'(a_1) & = \frac{2(n+3) a_1}{(n+3) a_1^2 + \frac{n-1}{n+1}} - \frac{1 + \frac{n-3}{(n+3) \sqrt{n-2}} \frac{(n+1) a_1}{V(n,a_1)}}{2(n+3)a_1 - \frac{n-3}{\sqrt{n-2}} V(n,a_1)}  \\
& \le \frac 1 {(n+5) a_1} - \frac{1 + \frac{n-3}{(n+3) \sqrt{n-2}} \frac{(n+1) a_1}{V(n,a_1)}}{2(n+3)a_1 - \frac{n-3}{\sqrt{n-2}} V(n,a_1)} \ .
\end{align*}
Calculation shows that the latter is negative if and only if \\
$1 \le \frac{n-3}{\sqrt{n-2}} (\frac{V(n,a_1)}{(n+1) a_1} + \frac{n+5}{n+3} \frac {a_1} {V(n,a_1)}) =  \frac{n-3}{\sqrt{n-2}} \phi(\frac{a_1}{V(n,a_1)}) =: l_n(a_1)$ with $A:= \frac 1 {n+1}$ and $B := \frac{n+5}{n+3}$ in terms of the above function $\phi$. Therefore $l_n(a_1) \ge \frac{n-3}{\sqrt{n-2}} 2 \sqrt{A B} = \frac{n-3}{\sqrt{n-2}} 2 \sqrt{\frac{n+5}{(n+1)(n+3)}} =: \Phi(n)$. We have that $\Phi$ is increasing with $\Phi(5) = \frac{\sqrt{10}} 3 > 1$. Therefore $H_n$ is decreasing for all $n \ge 5$. The second term in brackets is positive if and only if
$g(n,t,a_1) := N(n,t,a_1) M_2(n,t,a_1) - N_2(n,t,a_1) M(n,t,a_1) > 0$. Calculation shows
\begin{align*}
g(n,t,a_1) & = \left((n+1)(n+3)a_1^2-\frac{(n-1)^2}{n+1} \right) t + \\
& \frac{2 a_1}{n-2} \left(2(n^2+8n-21)a_1^2 + \frac{n-1}{n+1}(n^2-10n+17) \right) \\
 & - \frac{n-3}{\sqrt{n-2} (n+1)} V(n,a_1) \left[8(n+2)a_1^2 + (n-1)(n+3) a_1 t - 2 \frac{n-1}{n+1} \right] \ .
\end{align*}
Since $V(n,a_1)$ is decreasing in $a_1$, $g$ is obviously strictly increasing in $a_1$ and in $t$. We have for $t = a_1 = \sqrt{\frac{n-2}{3(n+1)}}$ that $g(n,t,a_1)=0$. Thus for $t < a_1$, $g(n,t,a_1)>0$, and hence $f_2$ is increasing in $a_1$, too.  For $n=4$, $\frac{f_2(n,t,a_1)}{\sqrt{a_1 - t}}$ is increasing in $a_1$ and hence $f(4,t,a_1)$ is increasing in $a_1$, too. \\

We conclude that for $\sqrt{\frac{n-2}{3(n+1)}} \le t < a_1 < \sqrt{\frac{n-1}{2(n+1)}}$, $A(\tilde{a},t)$ is increasing in $a_1$. Since for $a_1 \to \sqrt{\frac{n-1}{2(n+1)}}$, $V(n,a_1) \to 0$ and hence $\tilde{a} \to a^{(2)}$, we find from \eqref{eq4.2a} \\
$A(\tilde{a},t) \le  A(a^{(2)},t) = \frac{\sqrt{n+1}}{(n-2)!} \left(\sqrt{\frac {2(n-1)} {n+1}} \right)^n \left(\sqrt{\frac 2 {(n-1)(n+1)}}+t \right) \left(\sqrt{\frac {n-1} {2(n+1)}} - t \right)^{n-2}$, \\ if $t < \sqrt{\frac{n-1}{2(n+1)}}$, coinciding, of course, with the value for $A(a^{(2)},t)$ given in Proposition \ref{prop2.2}. Therefore in this case $a^{(2)}$ yields the maximum of $A(\cdot,t)$.

\vspace{0,5cm}

v) Secondly, consider the subcase of iii) that $a_1 > \sqrt{\frac{n-1}{2(n+1)}} =:t_1$. By Lemma \ref{lem1}, $a_2 \le \psi(a_1) \le \psi(t_1) = t_1$. Thus $a_1 > t_1 \ge a_2 > t > t_0 := \sqrt{\frac{n-2}{3(n+1)}}$. Since $A(\cdot,t)$ is increasing in $a_2$, the maximum occurs for $a_2 = \psi(a_1)$. Then $W = 0$ and
\begin{equation}\label{eq4.2}
\frac{(n-1)!}{\sqrt{n+1}} A(a,t) = \frac{ \left(\frac{a_1-t} {a_1+ \frac{a_1+a_2} {n-1} } \right)^{n-1} - \left(\frac{a_2-t} {a_2+ \frac{a_1+a_2} {n-1} } \right)^{n-1} }{a_1 - a_2} =: B(a_1,a_2,t) \ .
\end{equation}
For $a_1 > t_1$, $\psi'(a_1) = -\frac{n+1} n \frac{\sqrt{n-1} a_1}{\sqrt{n-(n+1) a_1^2}} - \frac 1 n \le -1$. Since $\psi(t_1) = t_1$, we have for $a_1 > t_1$ and a suitable $\theta \in (t_1,a_1)$
$$a_1 - a_2 = a_1 - \psi(a_1) = a_1 - t_1 - (\psi(a_1) - \psi(t_1)) = (a_1 - t_1) (1 - \psi'(\theta)) \ge 2 (a_1 - t_1) \ . $$
We will show for $n \ge 5$ that $A(a,t) < A(a^{(1)},t)$, considering two cases: first, $a_1 \ge \sqrt{\frac{n-\frac 1 2}{2(n+1)}} =: t_2$ and secondly $t_1 < a_1 < t_2$. In the first case, $a_1$ and $a_2 = \psi(a_1)$ are sufficiently far apart that we can omit the second term in $B(a_1,a_2,t)$. In the second case, we use the mean value theorem to estimate $B(a_1,a_2,t)$. The second case is investigated in part vi) below, the case $n=4$ in part vii). \\

Thus let $n \ge 5$ and $a_1 \ge t_2 > t_1 > a_2 = \psi(a_1) > t > t_0$. By \eqref{eq4.2}
\begin{align}\label{eq4.3}
\frac{(n-1)!}{\sqrt{n+1}} A(a,t) & \le \frac 1 {2(a_1-t_1)} \left(\frac{a_1-t} {a_1+ \frac{a_1+\psi(a_1)} {n-1} } \right)^{n-1}
= \frac {(\frac{n-1} n)^{n-1}} {2(a_1-t_1)}  \left(\frac{1- \frac t {a_1}}{1+\frac 1 n \frac{\psi(a_1)}{a_1}} \right)^{n-1} \\
& \le \frac {(\frac{n-1} n)^{n-1}} {2(a_1-t_1)} \left(\frac{1- \frac t {a_1}}{1+\frac 1 n \frac t {a_1}} \right)^{n-1} =: L \ , \nonumber
\end{align}
Since $\psi^2 = \Id$ and $\psi$ is decreasing, this implies
$a_1 = \psi(a_2) \le \psi(t_0) = \sqrt{\frac 2 3} \frac{\sqrt{n^2-1}} n - \frac 1 n t_0 < \sqrt{\frac 2 3} \sqrt{\frac n {n+1}} =: t_3$. The last inequality follows from $\frac{n^2-1}{n^2} < (\sqrt{1-\frac 1 {n+1}} + \frac 1 n t_0)^2 = 1 - \frac 1 {n+1} + \frac{t_0^2}{n^2} + \frac 2 {\sqrt 3} \frac 1 {n+1} \sqrt{\frac{n-2} n}$, which is implied by $1 - \frac 1 n < \frac 2 {\sqrt 3} \sqrt{\frac{n-2} n}$ for $n \ge 5$. We want to show that $L$ is strictly less than
$$\frac{(n-1)!}{\sqrt{n+1}} A(a^{(1)},t) = \left(\frac n {n+1} \right)^{n-\frac 1 2} \left(1 - \sqrt{\frac{n+1} n} t \right)^{n-1} =: R \ , $$
i.e. that \eqref{eq4.2} does not yield the maximum of $A(\cdot,t)$. We have
$$ \frac L R = \frac {(\frac{n^2-1} {n^2})^{n- \frac 1 2} \sqrt{\frac n {n-1}} } {2(a_1-t_1)} \left(\frac{1- \frac t {a_1}}{(1-\sqrt{\frac{n+1} n} t)(1+\frac 1 n \frac t {a_1})} \right)^{n-1} \ . $$
Now $h(t) := \ln(\frac{1- \frac t {a_1}} {(1-\sqrt{\frac{n+1} n} t)(1+\frac 1 n \frac t {a_1})})$ is decreasing in $t$, since \\
$h'(t) = -\frac 1 {a_1-t}+\frac 1 {\sqrt{\frac n {n+1}} - t} - \frac {\frac 1 n} {a_1 + \frac 1 n t} = - \frac{\sqrt{\frac n {n+1}} - a_1}{(a_1-t)(\sqrt{\frac n {n+1}} - t)} - \frac {\frac 1 n} {a_1 + \frac 1 n t} < 0$,
so that $h$ is maximal for $ t = t_0 := \sqrt{\frac{n-2}{3(n+1)}}$. Therefore
$$\frac L R \le \frac  {(\frac{n^2-1} {n^2})^{n- \frac 1 2} \sqrt{\frac n {n-1}} }{2(a_1-t_1)} \left(\frac{1- \frac{t_0}{a_1}} {(1-\sqrt{\frac{n+1} n} t_0) (1+\frac 1 n \frac{t_0}{a_1})} \right)^{n-1} =: Q_n(a_1) . $$
We claim that $\phi(a_1) := \frac 1 {a_1-t_1} \left(\frac{1- \frac{t_0}{a_1}} {(1-\sqrt{\frac{n+1} n} t_0) (1+\frac 1 n \frac{t_0}{a_1})} \right)^{n-1}$ is first decreasing and then increasing in $(t_1,t_3]$. This will imply $Q_n(a_1) \le \max( Q_n(t_2) , Q_n(t_3) )$ for all $a_1 \in [t_2,t_3]$. The logarithmic derivative of $\phi$ is
\begin{align*}
(\ln \phi)'(a_1) &= - \frac 1 {a_1-t_1} + \frac{(n-1) t_0}{a_1 (a_1-t_0)} + \frac{(n-1) t_0}{a_1 (na_1+t_0)} \\
& = \frac{n t_0 a_1 -a_1^2 -(n-1) t_0 t_1}{(a_1(a_1-t_0)(a_1-t_1)} + \frac{(n-1) t_0}{a_1 (na_1+t_0)} \ .
\end{align*}
The first term is negative for $a_1 > t_1$ close to $t_1$, being singular at $t_1$. The quadratic term
$$q(a_1) = n t_0 a_1 -a_1^2 -(n-1) t_0 t_1 = \sqrt{\frac{n-2}{n+1}} \left( \frac{n a_1} {\sqrt 3} - \frac{(n-1)^{\frac 3 2}}{\sqrt 6 (n+1)^{\frac 3 2}} \right) -a_1^2$$
is positive at $a_1 = t_3 = \sqrt{\frac 2 3 \frac n {n+1}}$. The second zero of $q$ is near $n t_0 - t_1$, clearly $>1$ for $n \ge 5$. Thus the first term in
$(\ln \phi)'(a_1)$ has exactly one zero $\tilde{a} \in (t_1,t_3)$. It is decreasing in modulus for $a_1 \in (t_1,\tilde{a})$, since $q$ is increasing there, with negative values, and the denominator $a_1 (a_1-t_0) (a_1-t_1)$ is increasing, too. Therefore also $(\ln \phi)'(a_1)$ has exactly one zero $\bar{a} \in (t_1,t_3)$ with $\bar{a} < \tilde{a}$, since we add a second positive term. This means that $\phi$ is first decreasing and then increasing in $(t_1,t_3)$. Therefore
$$\frac L R \le Q_n(a_1) \le \max ( Q_n(t_2) , Q_n(t_3) ) \ , \ a_1 \in [t_2,t_3] \ . $$
To estimate $Q_n(t_3)$, we use
$\frac 1 {2(t_3-t_1)} = \frac{\sqrt{n+1}}{n+3} (\sqrt 6 \sqrt n + \frac 3 {\sqrt 2} \sqrt{n-1} ) \le (\sqrt 6 + \frac 3 {\sqrt 2}) \frac{n+\frac 1 2}{n+3} < 4.6 - \frac{11}{n+3} \ . $
Logarithmic differentiation shows that $f_1(n) :=(1+\frac 1 {\sqrt 2 n} \sqrt{\frac{n-2} n})^{n-1}$ is increasing in $n$, so that $f_1(n) \ge f_1(5) > \frac 3 2$ for $n \ge 5$. We also use $f_2(n) := (\frac{n^2-1} {n^2})^{n- \frac 1 2} \sqrt{\frac n {n-1}} < 1$. Note that $\frac{t_0}{t_3} = \frac 1 {\sqrt 2} \sqrt{\frac{n-2} n}$ and $\sqrt{\frac{n+1} n} t_0 = \frac 1 {\sqrt 3} \sqrt{\frac{n-2} n}$. The function $f_3(\alpha) := \frac{1-\frac{\alpha}{\sqrt 2}}{1-\frac{\alpha}{\sqrt 3}}$ is decreasing in $\alpha \in (0,1)$, since $(\ln f_2)'(\alpha) = - \frac{\sqrt 3 - \sqrt 2}{(\sqrt 3 - \alpha)(\sqrt 2 - \alpha)} < 0$. Applying this for $\alpha = \sqrt{\frac{n-2} n}$, $f_4(n) := \frac{1- \sqrt{\frac{n-2} {2n}}}{1- \sqrt{\frac{n-2} {3n}}}$ satisfies $f_4(n) \le f_4(5) < 0.82$ for all $n \ge 5$. We conclude that
$$Q_n(t_3) < (4.6 - \frac{11}{n+3}) \frac{f_4(n)^{n-1}}{f_1(n)} < \frac 2 3 (4.6 - \frac{11}{n+3}) 0.82^{n-1} =: f_5(n) \ , $$
which is decreasing in $n$, with $f_5(n) \le f_5(5) < 0.98 < 1$ for all $n \ge 5$. \\

As for $Q_n(t_2)$, we have $\frac{t_0}{t_2} = \sqrt{\frac 2 3 \frac{n-2}{n-\frac 1 2}}$, $\sqrt{\frac{n+1} n} t_0 = \sqrt{\frac{n-2} {3n}}$. By the arithmetic-geometric mean inequality
$\frac 1 {2(t_2-t_1)} = \sqrt 2 \sqrt{n+1} (\sqrt{n-\frac 1 2}+\sqrt{n-1}) \le 2 \sqrt 2 n + \frac 1 {2 \sqrt 2}$. Therefore
$$Q_n(t_2) \le (2 \sqrt 2 n + \frac 1 {2 \sqrt 2}) \left(\frac{n^2-1}{n^2}\right)^{n-\frac 1 2} \sqrt{\frac n {n-1}} \left( \frac{1 - \sqrt{\frac 2 3 \frac{n-2}{n-\frac 1 2}}}
{(1-\sqrt{\frac 1 3 \frac{n-2} n})(1 + \frac 1 n \sqrt{\frac 2 3 \frac{n-2}{n-\frac 1 2}})} \right)^{n-1} \ . $$
Let $f_6(n) :=  \frac{1 - \sqrt{\frac 2 3 \frac{n-2}{n-\frac 1 2}}}{1-\sqrt{\frac 1 3 \frac{n-2} n}}$. Then $f_6(5) < 0.604$ and we claim that $f_6(n) < \frac 3 5$ for all $n \ge 6$. This is equivalent to $2 < 5 \sqrt{\frac 2 3 \frac{n-2}{n-\frac 1 2}} - 3 \sqrt{\frac 1 3 \frac{n-2} n} =: f_7(n)$. Now
$\sqrt{\frac{n-2} {n- \frac 1 2}} \ge 1 - \frac 1 n$, $\sqrt{\frac{n-2} n} \le 1 - \frac 1 n$, so that $f_7(n) \ge (5 \sqrt{\frac 2 3} - \sqrt 3) (1 - \frac 1 n) >2$ for all $n \ge 7$. Also $f_7(6) >2$. Further, $f_8(n) := (1 + \frac 1 n \sqrt{\frac 2 3 \frac{n-2}{n-\frac 1 2}})^{n-1}$ is increasing in $n$, being essentially of the form $(1+ \sqrt{\frac 2 3} \frac 1 n (1+O(\frac 1 n)))^n$, with $f_8(n) \ge f_8(5) > 1.64$ for all $n \ge 5$. Hence for $n \ge 6$, we have
$Q_n(t_2) < \frac{2 \sqrt 2 n + \frac 1 {2 \sqrt 2}}{1.64} \left(\frac 3 5 \right)^{n-1} =: f_9(n)$, which is decreasing in $n$, with $f_9(n) \le f_9(6) < 0.83 < 1$ for $n \ge 6$. For $n=5$, the estimate has to be refined: first, replace $t_2 \simeq 0.6124$ by $\tilde{t_2} = \sqrt{\frac{n- \frac 2 5}{2(n+1)}} \simeq 0.6191$, with $\frac 1 {2(\tilde{t_2} - t_1)} < 12$ and the corresponding $\tilde{f_6}(5) < 0.617$, $\tilde{f_8}(5) > 1.64$. We also use $f_2(5) < 0.931$. Then for $n=5$, $Q_n(\tilde{t_2}) < \frac{12}{1.64} \ 0.931 \cdot 0.617^4 < 0.99 < 1$. This leaves the estimate for $Q_n(a_1)$ for $a_1 \in [t_2,\tilde{t_2}]$, $n=5$. In this situation, we may improve the estimate $1 + \frac 1 n \frac{\psi(a_1)}{a_1} \ge 1 + \frac 1 n \frac t {a_1}$ leading to $L$, by using $\frac{\psi(a_1)}{a_1} \ge \frac{\psi(\tilde{t_2})}{\tilde{t_2}} > 0.86$ and $(1+ \frac 1 n \frac{\psi(a_1)}{a_1})^{n-1} > (1+ \frac{0.86} n)^{n-1} > 1.88$, $n=5$. Then for $a_1 \in [t_2,\tilde{t_2}]$ and $n=5$, $\frac L R < \frac{10 \sqrt 2 + \frac 1 {2 \sqrt 2}} {1.88} \ 0.931 \cdot 0.604^4 < 0.96 < 1$. This shows $A(a,t) < A(a^{(1)},t)$ also for $n=5$. \\

\vspace{0,5cm}

vi) We now study the case $n \ge 5$ and $t_0 < t < a_2 = \psi(a_1) < t_1 < a_1 < t_2$. Again, we estimate the right side $B(a_1,a_2,t)$ in \eqref{eq4.2}. Let $\gamma := \frac{a_1+a_2}{n-1}$ and consider $g(x) := (\frac{x-t}{x+\gamma})^{n-1}$. Then
$g'(x) = (n-1) \frac{(x-t)^{n-2}}{(x+\gamma)^n} (t + \gamma)$, $g''(x) = (n-1) \frac{(x-t)^{n-3}}{(x+\gamma)^{n+1}} (t + \gamma) (n(t+\gamma)-2(x+\gamma))$. Since $4 t > 2 a_1$, $2 t > a_2$, $g''(x) > 0$ for all $x \in (a_2,a_1)$. Therefore $g'$ is increasing in $(a_2,a_1)$ and we find from \eqref{eq4.2} with suitable $\theta \in (a_2,a_1)$
$$B(a_1,a_2,t) = g'(\theta) \le g'(a_1) = (n-1) \frac{(a_1-t)^{n-2}}{(a_1+\frac{a_1+a_2}{n-1})^n} \left(t + \frac{a_1+a_2}{n-1} \right)  \ . $$
To estimate $B(a_1,a_2,t)$, we will use $a_1+a_2 = a_1 + \psi(a_1) \le 2 t_1$. This follows from $\psi'(x) \le -1$, $x \ge t_1$.
Further, we claim that $\frac{a_2}{a_1} \ge 1 - \frac 3 5 \frac 1 n$. Since $\psi$ is decreasing,
$\frac{a_2}{a_1} \ge \frac{\psi(t_2)}{t_2} = \frac{\sqrt{n^2-1}} n \sqrt{\frac{2n+1}{2n-1}} - \frac 1 n$. Our claim will follow from
$\frac{\sqrt{n^2-1}} n \sqrt{\frac{2n+1}{2n-1}} \ge 1 + \frac 2 5 \frac 1 n$. This is true, since for $n \ge 5$ we have
$\frac{n^2-1} n \frac{2n+1}{2n-1} - 1 \ge \frac{13}{15} \frac 1 n \ge \frac 4 5 \frac 1 n + \frac 4 {25} \frac 1 {n^2}$. Thus
$(1 + \frac 1 n \frac{a_2}{a_1})^n \ge (1+ \frac 1 n (1- \frac 3 5 \frac 1 n))^n \ge (1+ \frac{22}{25} \frac 1 n)^n > \frac{20} 9$ for $n \ge 5$. We find
\begin{align*}
B(a_1,a_2,t) & \le (n-1) \left(\frac{n-1} n \right)^n \left(t + \frac{2 t_1}{n-1} \right) \frac{(1- \frac t {a_1})^{n-2}}{a_1^2 (1 + \frac 1 n \frac{a_2}{a_1})^n} \\
& \le \frac 9 {20} (n-1) \left(\frac{n-1} n \right)^n \left(t + \frac{2 t_1}{n-1} \right) \frac{(1- \frac t {a_1})^{n-2}}{a_1^2} \ .
\end{align*}
The function $k(a_1) := \frac{(1- \frac t {a_1})^{n-2}}{a_1^2}$ is increasing in $a_1$, since $(\ln k)'(a_1) = \frac{n t - 2 a_1}{a_1 (a_1-t)} > 0$. Therefore
$$B(a_1,a_2,t) \le \frac 9 {20} (n-1) \left(\frac{n-1} n \right)^n \left(t + \frac{2 t_1}{n-1} \right) \frac{(1- \frac t {t_2})^{n-2}}{t_2^2} =: L \ . $$
Again we claim that $A(a,t) < A(a^{(1)},t)$. This will follow from $\frac L R < 1$ with $R$ as in v). We have
$$\frac L R = \frac 9 {20} (n-1) \left(\frac{n^2-1}{n^2} \right)^n \sqrt{\frac n {n-1}} \frac{t + \frac{2 t_1}{n-1}}{t_2^2} \frac{(1-\frac t {t_2})^{n-2}}{(1-\sqrt{\frac{n+1} n} t)^{n-1}} \ . $$
The function $h(t) := (t + \delta) \frac{(1-\frac t {a_1})^{n-2}}{(1-\sqrt{\frac{n+1} n} t)^{n-1}}$ is decreasing in $t$, for any fixed $\delta \in (0,1)$, since $(\ln h)'(t) =  \frac 1 {t+\delta} - \frac{n-2}{a_1-t} + \frac{n-1}{\sqrt{\frac n {n+1}} - t} = \frac 1 {t+\delta} + \frac 1{\sqrt{\frac n {n+1}} - t} -(n-2) \frac{\sqrt{\frac n {n+1}} -a_1} {(a_1-t)(\sqrt{\frac n {n+1}} - t)} < 0$. Thus $\frac L R$ will be maximal for  $t=t_0$, when
$\frac{t_0}{t_2}  = \sqrt{\frac 2 3 \frac{n-2}{n- \frac 1 2} }$, $\sqrt{\frac{n+1} n} t_0  = \sqrt{\frac 1 3 \frac{n-2} n}$ and then, also using
$(\frac{n^2-1}{n^2} )^n < \exp(-\frac 1 n)$,
$$\frac L R \le \frac 9 {20} (n-1) \exp(-\frac 1 n) \left( \frac 2 {\sqrt 3} \frac{\sqrt{n(n-2)}}{n- \frac 1 2} + 2 \sqrt 2 \frac{\sqrt{\frac n {n-1}}}{n-\frac 1 2} \right) \frac{(1-\sqrt{\frac 2 3 \frac{n-2}{n- \frac 1 2}})^{n-2}}{(1-\sqrt{\frac 1 3 \frac{n-2} n})^{n-1}} =: \alpha(n) \ . $$
This gives $\frac L R \le \alpha(5) < 0.99 < 1$ for $n=5$. For $n > 5$, we use $\sqrt{1+x} \le 1 + \frac x 2$ for $|x| < 1$ to estimate
$\frac 2 {\sqrt 3} \frac{\sqrt{n(n-2)}}{n- \frac 1 2} + 2 \sqrt 2 \frac{\sqrt{\frac n {n-1}}}{n-\frac 1 2}  \le \frac 2 {\sqrt 3} - \frac 1 {\sqrt 3 n} + \frac{2 \sqrt 2}{n-1} < \frac 8 5$ for all $n \ge 7$. For $n=6$, the left side is also $< \frac 8 5$. Therefore
$$\frac L R \le \frac{18}{25} (n-1) \exp(- \frac 1 n)  \frac{(1-\sqrt{\frac 2 3 \frac{n-2}{n- \frac 1 2}})^{n-2}}{(1-\sqrt{\frac 1 3 \frac{n-2} n})^{n-1}} \ . $$
We know from part v) that  $\frac{1-\sqrt{\frac 2 3 \frac{n-2}{n- \frac 1 2}}}{1-\sqrt{\frac 1 3 \frac{n-2} n}} < \frac 3 5$ for $n \ge 6$. Using also
$1 - \sqrt{\frac 1 3 \frac{n-2} n} \ge 1 - \frac 1 {\sqrt 3} > \frac {21}{50}$, we get for all $n \ge 6$ that
$\frac L R \le \frac {12} 7 \exp(-\frac 1 n) (n-1) \left(\frac 3 5 \right)^{n-2} =: \beta(n)$. Since $(\ln \beta)'(n) = \frac 1 {n^2} + \frac 1 {n-1} - \ln(\frac 5 3) < 0$ for all $n \ge 6$, $\beta$ is decreasing with $\frac L R \le \beta(n) \le \beta(6) < 0.95 < 1$ for all $n \ge 6$. Therefore $A(a,t) < A(a^{(1)},t)$ for all $n \ge 5$. \\

\vspace{0,5cm}

vii) Last we consider the dimension $n=4$ and $a_1 > t_1 = \sqrt{ \frac 3 {10}}$, $a_2 = \psi(a_1) > t > t_0 = \sqrt{\frac 2 {15}}$. We have
\begin{equation}\label{eq4.4}
\frac 6 {\sqrt 5} A(a,t) = \frac 1 {a_1-a_2} \left( \frac{(a_1-t)^3}{(a_1+\frac{a_1+a_2} 3)^3} - \frac{(a_2-t)^3}{(a_2+\frac{a_1+a_2} 3)^3 } \right) \ .
\end{equation}
This is equal to $\frac 1 {a_1-a_2} \int_{a_2}^{a_1} \frac{(x-t)^2}{(x+\alpha)^4} \ dx \ (t + \alpha)$, $\alpha = \frac{a_1+a_2} 3 \ge \beta := 0.3548$, since
$h(x) = \frac{(x-t)^3}{(x+\alpha)^3}$ has the derivative $h'(x) = \frac{(x-t)^2}{(x+\alpha)^4} (t + \alpha)$. Let $t_0 := 0.404$. We show that
$\frac {A(a,t)}{A(a^{(1)},t)}$ is decreasing for $t \in [t_0,\sqrt{\frac 3 {10}}]$, where $\frac 6 {\sqrt 5} A(a^{(1)},t) = \frac {16}{25} \left(\sqrt{\frac 4 5} - t \right)^3$. By the integral formula, it suffices to prove that $k(t) := \frac {(x-t)^2}{(\sqrt{\frac 4 5} - t)^3} (t+ \beta)$ is decreasing in $t$ for any $x \le a_1 \le \psi(t)$. Note here that $t \le a_2 = \psi(a_1)$ implies $a_1 \le \psi(t)$ since $\psi^2 = \Id$ and $\psi$ is decreasing. We have for $x \le \psi(t_0) \le 0.6717$
\begin{align*}
(\ln k)'(t) & = \frac{-2}{x-t} + \frac 3 {\sqrt{\frac 4 5} - t} + \frac 1 {t + \beta} \\
& = - \frac{2 \sqrt{\frac 4 5} +t -3 x}{(x-t) \left(\sqrt{\frac 4 5} - t \right)} + \frac 1 {t + \beta}
\le - \frac{0.1777}{0.268 \cdot 0.491} + \frac 1 {0.758} \le - 0.03 < 0 \ .
\end{align*}
Therefore $k$ is decreasing for $t \in [t_0,\sqrt{\frac 3 {10}}]$. Thus if we prove that $A(a,t_0) \le A(a^{(1)},t_0)$ holds, $A(a,t) \le A(a^{(1)},t)$ will follow for all $t \in [t_0,\sqrt{\frac 3 {10}}]$. \\
Calculation shows that the formal singularity in \eqref{eq4.4} can be removed; we find with $a_2 = \psi(a_1)$
\begin{align}\label{eq4.5}
\frac 6 {\sqrt 5} A(a,t) & = \left(\frac{t + \frac{a_1+a_2} 3}{(a_1 + \frac{a_1+a_2} 3)(a_2 + \frac{a_1+a_2} 3)} \right)^3 (a_1-a_2)^2 \nonumber \\
& \quad \quad + \frac{3 (t + \frac{a_1+a_2} 3)}{(a_1 + \frac{a_1+a_2} 3)^2 (a_2 + \frac{a_1+a_2} 3)^2} (a_1-t) (a_2-t) \\ \nonumber
& =: f_1(t,a_1,a_2) + f_2(t,a_1,a_2) =: f(t,a_1,a_2) \ . \nonumber
\end{align}
We claim that $f(t,a_1,\psi(a_1))$ is decreasing in $a_1 \in [\sqrt{\frac 3 {10}}, \psi(\sqrt{\frac 3 {10}})] \simeq [0.5477,0.6993]$ for all $t \in [\sqrt{\frac 2 {15}},t_0]$. Then the maximum for these $t$ in \eqref{eq4.5} will be attained for $a_1 = a_2 = \sqrt{\frac 3 {10}}$, when $f_1 = 0$ and $f_2 = \frac 6 {\sqrt 5} A(a^{(2)},t)$. Therefore for $t \in [\sqrt{\frac 2 {15}},t_0]$, $A(a,t) \le A(a^{(2)},t)$, with equality for $a_1 = \sqrt{\frac 3 {10}}$. For $t > \tilde{t} \simeq 0.3877$, $A(a^{(2)},t) < A(a^{(1)},t)$, so that, in particular, $A(a,t_0) < A(a^{(1)},t_0)$ holds as required above. \\

Actually, for all $t \in [\sqrt{\frac 2 {15}},\sqrt{\frac 3 {10}}]$, $f_2(t,a_1,\psi(a_1))$ is decreasing in $a_1 \in [\sqrt{\frac 3 {10}},\psi(t)]$ and
$f_1(t,a_1,\psi(a_1))$ is increasing. For $t \in [\sqrt{\frac 2 {15}}, t_0]$, $f_2$ is decreasing faster than $f_1$ is increasing (up from 0). One has for $a_1$  near $\sqrt{\frac 3 {10}}$
\begin{align*}
f&(t, a_1,\psi(a_1)) = \frac 6 {\sqrt 5} A(a^{(2)},t) \\
& - \frac{54}{125} \left(4 t - \sqrt{\frac 2 {15}} \right) \left(\frac 7 3 + 10 \sqrt{\frac 2 {15}} t - 20 t^2 \right) \left(a_1- \sqrt{\frac 3 {10}} \right)^2 +O \left( \left(a_1- \sqrt{\frac 3 {10}} \right)^3 \right) \ .
\end{align*}
This means that close to $\sqrt{\frac 3 {10}}$, $f$ is decreasing for all $t$ with $t \le \frac 1 4 (\sqrt 2 + \sqrt{\frac 2 {15}}) \simeq 0.4448$. For $t \le t_0$, $f$ is decreasing in the full interval $[\sqrt{\frac 3 {10}},\psi(\sqrt{\frac 3 {10}})]$, as a tedious calculation of the derivative
$\frac d {d a_1} f(t,a_1,\psi(a_1))$ shows, using that $\psi(a_1) \le -1$ for $a_1 \in [\sqrt{\frac 3 {10}},\psi(\sqrt{\frac 3 {10}})]$. We do not give the details of the calculation. The conclusion is $A(a,t) \le \max ( A(a^{(1)},t) , A(a^{(2)},t) )$, with equality for $t \le \tilde{t}$ and $a_1=a_2$ and strict inequality for $t> \tilde{t}$.
\end{proof}

{\it Remark.} For $0.411 < t < 0.444$, $f$ in part vii) is first decreasing and then increasing in $[\sqrt{\frac 3 {10}},\psi(t)]$. For $t > 0.445$ $f$ is increasing in the full interval. The situation for $n=4$ is slightly different than for $n \ge 5$, in the sense that in the second case $a_1 > a_2 >t$, $t \le \tilde{t}$ leads to a maximal situation $A(a,t)=A(a^{(2)},t)$ in the limit $a_1=a_2 = \sqrt{\frac{n-1}{2 (n+1)}}$ for $n=4$, whereas for $n \ge 5$ it does not. The above quotient $\frac{A(a,t)}{A(a^{(1)},t)}$ is actually also decreasing for $t \in[\sqrt{\frac 2 {15}},t_0]$, but the quotient may be $>1$ for $t < \tilde{t}$. \\

\vspace{0,5cm}

\section{Dimensions $n=2$ and $n=3$}

We now prove Propositions \ref{prop2} and \ref{prop3} and start with the easy case of dimension 2. \\

{\it Proof} of Proposition \ref{prop2}. \\
Let $a \in S^2 \subset \R^3$ with $\sum_{j=1}^3 a_j = 0$, $t < \sqrt{\frac 2 3}$ and $a_1 > t$. Then $a_2+a_3 = -a_1$, $a_2^2+a_3^2 = 1-a_1^2$ imply that
$a_{2,3} = -\frac{a_1} 2 \pm \frac 1 2 \sqrt{2-3 a_1^2}$, $(a_1-a_2)(a_1-a_3) = 3 a_1^2-\frac 1 2$. By Proposition \ref{prop2.1}
$A(a,t) = 2 \sqrt 3 \frac{a_1-t}{6 a_1^2-1}$. \\
If $e_2, e_3 \in H_-(a,t) = \{ x \in \Delta^2 | \langle a , x \rangle < t \}$, we have by Lemma \ref{lem1} that $2 t > a_2 + a_3 = - a_1 \ge - \sqrt{\frac 2 3}$, $t > - \frac 1 {\sqrt 6}$. Therefore, if $t \le - \frac 1 {\sqrt 6}$, only one vertex $e_j$ can be in $H_-(a,t)$. Using $A(a,t) = A(-a,-t)$, it suffices to consider only one non-zero term in formula \eqref{eq2.1} for $A(a,t)$ when
i) $\frac 1 {\sqrt 6} \le t < \sqrt{\frac 2 3}$ or ii) $t \in [0,\frac 1 {\sqrt 6}]$ or $-t \in [0,\frac 1 {\sqrt 6}]$. \\
We have $\frac d {d a_1} \frac{a_1-t}{6 a_1^2-1} = - \frac{6a_1^2 + 1 - 12 a_1 t}{(6 a_1^2 -1 )^2}$. This is zero for $a_1 = t + \sqrt{t^2-1/6}$, recalling $a_1 >t$. For $t > \frac 5 4 \frac 1 {\sqrt 6}$, $a_1 > \sqrt{\frac 2 3}$ which is impossible. For  $t > \frac 5 4 \frac 1 {\sqrt 6}$, $\frac d {d a_1} A(\cdot,t) < 0$, and $A(a,t)$ is maximal for $a_1 = \sqrt{\frac 2 3}$, $a_2 = a_3 = - \frac 1 {2 \sqrt 3}$, i.e. $a = a^{(1)}$. For $\frac 1 {\sqrt 6} < t < \frac 5 4 \frac 1 {\sqrt 6}$, $\frac 1 {\sqrt 6} < a_1:= t + \sqrt{t^2-1/6} < \sqrt{\frac 2 3}$, and $\frac{x-t}{6 x^2-1}$ is increasing for $x < a_1$ and decreasing for $x>a_1$. Thus $a^{\{t\}} \in S^2$ yields the maximum of $A(\cdot,t)$ in this range of $t$. For $0 \le t \le \frac 1 {\sqrt 6}$, again $\frac d {d a_1} A(\cdot,t) < 0$ and $A(\cdot,t)$ attains its minimum for the maximal value of $a_1$, i.e. for $a^{(1)}$. \\

As for the maximum of $A(\cdot,t)$ for $0 \le t \le \frac 1 {\sqrt 6}$, we have to consider $-t$, i.e. $A(a,t) = 2 {\sqrt 3} \frac{a_1+t}{6 a_1^2 -1}$. Then $\frac d {d a_1} \frac{a_1+t}{6 a_1^2-1} = - \frac{6a_1^2+1+12 a_1 t}{(6a_1^2-1)^2} < 0$ and $A(\cdot,t)$ is decreasing in $a_1$. We need $a_1 > -t \ge a_2,a_3$. Thus $a_2+t \le 0$ with $a_2 = \frac 1 2 (\sqrt{2-3 a_1^2} -a_1)$. This implies $a_1 \ge \frac 1 2 (\sqrt{2-3 t^2} + t)$, and in the maximum case $a_1 = \frac 1 2 (\sqrt{2-3 t^2} + t)$, $a_2 = -t$ and $a_3 = -\frac 1 2 (\sqrt{2 - 3 t^2} - t)$: Replacing $-t = |t|$ again by $t$ yields the maximum $a^{[t]}$ stated in Proposition \ref{prop2}. \hfill $\Box$

\vspace{0,5cm}

In dimension 3 there are more critical points of $A(a,t)$. \\

{\it Proof} of Proposition \ref{prop3}. \\
Let $a \in S^3 \subset \R^4$ with $\sum_{j=1}^4 a_j = 0$ and $t \le \frac{\sqrt 3} 2$. If $e_2, e_3, e_4 \in H_-(a,t)$, we get using Lemma \ref{lem1} i) that
$3 t > a_2+a_3+a_4 = - a_1 \ge - \frac{\sqrt 3} 2$, $t > - \frac 1 {2 \sqrt 3}$. Therefore, if $t \le - \frac 1 {2 \sqrt 3}$, only one or two vertices of $\Delta^3$ can be in $H_-(a,t)$. Using $A(a,t) = A(-a,-t)$, it suffices to consider one or two non-zero terms in formula \eqref{eq2.1} for $A(a,t)$ when
i) $\frac 1 {2 \sqrt 3} \le t < \frac{\sqrt 3} 2$ or ii) $t \in [0,\frac 1 {2 \sqrt 3}]$ or $-t \in [0,\frac 1 {2 \sqrt 3}]$. Thus we have to consider
two possible cases in the volume formula \eqref{eq2.1}: if $a_1 > t \ge a_2, a_3, a_4$, there is only one non-zero term in \eqref{eq2.1}; if $a_1> a_2 > t \ge a_3, a_4$, there are two terms, exchanging possibly $t$ and $-t$ when $|t| \le \frac 1 {2 \sqrt 3}$. \\

a) i) Consider first the case $a_1 > t \ge a_2, a_3, a_4$. As in the proof of Theorem \ref{th1}, there are at most two different coordinates among $(a_2,a_3,a_4)$. Letting $a_2 = a_3 =: c$, $a_4 =: d$, we have $a_1 + 2 c + d =0$, $a_1^2 + 2 c^2 + d^2 =1$. We have two possibilities
$$ \left\{\begin{array}{c@{\quad}l}
c_+ = & -\frac 1 3 a_1 + \frac 1 6 \sqrt{6-8a_1^2} \\
d_+ = & -\frac 1 3 a_1 - \frac 1 3 \sqrt{6-8a_1^2}
\end{array}\right\}  \quad , \quad
 \left\{\begin{array}{c@{\quad}l}
c_- = & -\frac 1 3 a_1 - \frac 1 6 \sqrt{6-8a_1^2}    \\
d_- = & -\frac 1 3 a_1 + \frac 1 3 \sqrt{6-8a_1^2}
\end{array}\right\}  \ . $$
For $(c_+,d_+)$, $c_+ < a_1$ requires $a_1 > \frac 1 {2 \sqrt 3}$ and if $a_1 > t \ge \frac 1 {2 \sqrt 3}$, $c_+ < t$ is satisfied. If $0 \le t < \frac 1 {2 \sqrt 3}$, for $c_+ < t$ we need $a_1 > \frac 1 2 \sqrt{2-8 t^2}-t$. \\
For $(c_-,d_-)$, $d_- < a_1$ requires $a_1 > \frac 1 2$ and if $a_1 > t \ge \frac 1 2$, $d_- < t$ is satisfied. If $0 \le t \le \frac 1 2$, for $d_- < t$ we need $a_1 > \frac 1 3 \sqrt{6-8 t^2} - \frac t 3$. \\
Proposition \ref{prop2.1} yields  $A(a,t) = f_{\pm}(a_1,t) = \frac{(a_1-t)^2}{(a_1-c_{\pm})^2 (a_1-d_{\pm})} $. We find with $' = \frac d {da_1}$ \\
$(\ln f_{\pm})'(a_1,t) = \frac 4 {a_1-t} \frac 1 {4a_1 \pm \sqrt{6-8 a_1^2}} \frac 1 {8a_1 \mp \sqrt{6-8a_1^2}} [ \pm(3t-a_1) \sqrt{6-8a_1^2} + 30 a_1 t - 10 a_1^2 -3] \ .$
The denominators are positive, if $a_1 > \frac 1 {2 \sqrt 3}$ or $a_1 > \frac 1 2$, respectively. \\

In the case of $f_+$, let $t_0:= \frac 7 {10} \frac 1 {\sqrt 3} \simeq 0.4041$. Since the factor \\
$F(a_1,t) = (3t-a_1) \sqrt{6-8a_1^2} + 30 a_1 t - 10 a_1^2 -3$ is increasing in $t$, its minimum for $t \ge t_0$ is in $t_0$, with $F(a_1,t_0) = (\frac 7 {10} \sqrt 3 -a_1) \sqrt{6-8a_1^2} + 7 \sqrt 3 a_1 - 10 a_1^2 -3$, which is decreasing in $a_1$, with value $0$ in $a_1 = \frac{\sqrt 3} 2$. Therefore $(\ln f_+)' \ge 0$ for all $\frac 1 {2 \sqrt 3} \le a_1 \le \frac{\sqrt 3} 2$ and $t \ge t_0$. Hence for $t \ge t_0$, the maximum of $f_+$ is in $a_1 = \frac {\sqrt 3} 2$, i.e. $a^{(1)}$ attains the maximum. The minimum here is irrelevant, since $t\ge t_0 > \frac 1 {2 \sqrt 3}$ and the the minimum of $A(\cdot,t)$ is zero. \\

For $\frac 1 {2\sqrt 3} < t < \frac 7 {10} \frac 1{\sqrt 3} = t_0$, $F(a_1,t)=0$ if and only if $t= \frac{a_1} 3 + \frac 1 {10a_1+\sqrt{6-8 a_1^2}} =: \phi(a_1)$. The function $\phi$ is strictly increasing and bijective $\phi: [\frac 1 {2\sqrt 3},\frac{\sqrt 3} 2] \to [\frac 1 {2\sqrt 3},\frac 7 {10} \frac 1{\sqrt 3}]$. For $\frac 1 {2\sqrt 3} < t < \frac 7 {10} \frac 1{\sqrt 3}$, let $a_0$ be the unique solution of $t= \phi(a_1)$. Then $f_+$ is increasing for $a_1 < a_0$ and decreasing for $a_0< a_1$, hence attains its maximum at $a_0$: In this case, $A(\cdot,t)$ attains its maximum at $\tilde{a} :=(a_0,c(a_0),c(a_0),d(a_0))$,
$A(\tilde{a},t) = \frac{(a_0-t)^2}{(a_0-c(a_0))^2 (a_0-d(a_0))} = \frac{(a_0- \phi(a_0))^2}{(a_0-c(a_0))^2 (a_0-d(a_0))} =: \Phi(a_0)$. It can be checked that for $a_0 \ge \frac 1 3$, this is strictly smaller than $A(a^{(2)},t) = A(a^{(2)},\phi(a_0)) = \frac 1 2 - 2 \phi(a_0)^2$, so that $\tilde{a}$ does not yield the absolute maximum of $A(\cdot,t)$ for $t \ge \phi(\frac 1 3) = \frac{22+\sqrt {19}}{90} \simeq 0.2928$. However, for $t$ very close to $\frac 1 {2 \sqrt 3} \simeq 0.2887$, $A(\tilde{a},t) > A(a^{(2)},t)$. But in this case $A(\tilde{a},t) < A(a^{\{t\}},t)$ for the vector $a^{\{t\}}$ given in Proposition \ref{prop3} and considered in part b) i) below. Actually, $\Phi$ is decreasing in $a_0$ and $A(a^{\{t\}},t)$ is decreasing in $t$, so that for $\frac 1 {2 \sqrt 3} < t < \phi(\frac 1 3) \simeq 0.2928$,
$A(\tilde{a},t)  = \Phi(a_0) < \Phi(\frac 1 {2\sqrt 3}) = \frac {16}{75} \sqrt 3 \simeq 0.3695 < 0.4069 \simeq A(a^{\{(\phi(\frac 1 3)\}},\phi(\frac 1 3)) \le A(a^{\{t\}},t)$,
and again $\tilde{a}$ does not yield the maximum of $A(\cdot,t)$. We have $\lim_{s \searrow \frac 1 {2 \sqrt 3}} A(a^{\{s\}},s) = \frac 5 {18} \sqrt 3$, see part b) i) below. \\

For  $0 \le t < \frac 1 {2 \sqrt 3}$, $F(a_1,t) <0$ : since it is increasing in $t$, the maximum of $F$ is attained at $t= \frac 1 {2 \sqrt 3}$ and
$F(a_1,\frac 1 {2 \sqrt 3}) = (\frac {\sqrt 3} 2  -a_1) \sqrt{6-8a_1^2} + 5 \sqrt 3 a_1 - 10 a_1^2 -3$ is decreasing in $a_1$ and $0$ in $a_1 = \frac 1 {2 \sqrt 3}$. Thus $(\ln f_+)' \le 0$ for all $\frac 1 {2 \sqrt 3} \le a_1 \le \frac{\sqrt 3} 2$ and $0 \le t \le \frac 1 {2 \sqrt 3}$. Hence the minimum of $A = f_+$ for all $0 \le t \le \frac 1 {2 \sqrt 3}$ is in $a_1 = \frac {\sqrt 3} 2$, i.e. $a^{(1)}$ attains the minimum of $A(\cdot,t)$. The maximum of $A(\cdot,t)= f_+$ is attained at the minimal possible value $a_1 = \frac 1 2 \sqrt{2-8 a_1^2} - t$, and the extremal vector is
$$a^{[t]} = \left(\frac 1 2 \sqrt{2-8 a_1^2} - t, t, t, -\frac 1 2 \sqrt{2-8 a_1^2} - t \right) $$
with $A(a^{[t]},t) = \frac 1 {\sqrt{2 - 8 t^2}}$. \\

ii) In the case of $f_-$, let $F(a,t) = (a_1 - 3 t) \sqrt{6-8a_1^2} + 30 a_1 t - 10 a_1^2 -3$. Recall $a_1 > \frac 1 2$. Then
$\frac d {dt} F(a_1,t) = 30 a_1 -3 \sqrt{6 - 8 a_1^2} > 0$ for $a_1 > \frac 1 2$. Hence $F$ is increasing in $t$. For $0 \le t \le t_0 = \frac 7 {10} \frac 1 {\sqrt 3}$, $f$ is maximal in $t_0$, with $F(a_1,t_0) = (a_1 - \frac 7 {10} \sqrt 3) \sqrt{6-8a_1^2} + 7 \sqrt 3 a_1 - 10 a_1^2 -3$, which is increasing in $a_1$, $F(\frac {\sqrt 3} 2 , t_0) = 0$. Therefore $(\ln f_-)' \le 0$ for all $0 \le t \le t_0$ and $\frac 1 2 \le a_1 \le \frac{\sqrt 3} 2$, and $f_-$ attains its maximum at $a_1 = \frac 1 2$, i.e. $a^{(2)} = \frac 1 2 (1,-1,-1,1)$ attains the maximum of $A(\cdot,t)$ in the case of $f_-$ for $0 \le t \le t_0$. The minimum is in $a_1 = \frac{\sqrt 3} 2$, with vector $a^{(1)}$. \\

For $t_0 < t < \frac 1 2$, we have $F < 0$ in $\frac 1 2 < a_1 < a_0$ and $F>0$ in $a_0 < a_1 < \frac {\sqrt 3} 2$, where $a_0$ is the solution of
$t = \frac{3+10 a_1^2 - a -\sqrt{6-8 a_1^2}}{3(10 a_1 - \sqrt{6-8 a_1^2})} =:\phi(a_1)$. Here $\phi : [\frac 1 2, \frac{\sqrt 3} 2] \to [t_0,\frac 1 2]$ is bijective and strictly decreasing, so that there is a unique solution $a_1 \in [\frac 1 2, \frac{\sqrt 3} 2]$. The minimal possible value of $a_1$ is
$\tilde{a} = \frac 1 3 \sqrt{6 - 8 t^2} - \frac t 3$. Moreover, $\phi(\tilde{a}) > t = \phi(a_0)$. Since $\phi$ is decreasing, $\tilde{a} < a_0$. Hence the maximum of $f_-$ is attained either in $a_1=\tilde{a}$ or in $a_1=\frac{\sqrt{3}} 2$. For $t \le \bar{t} \simeq 0.4259$ this occurs in $\tilde{a}$, for $t \ge \bar{t}$ in $\frac{\sqrt{3}} 2$. In the latter case, the maximum of $A(\cdot,t)$ is in $a^{(1)}$. However, for $t_0 < t < \bar{t}$,
$f_-(\tilde{a},t) < \frac 1 2 - 2 t^2 = A(a^{(2)},t)$, so that the maximum of $f_-$ for $t_0 < t < \frac 1 2$ is attained either in $a^{(1)}$ or in $a^{(2)}$. \\

b) i) Next we consider the second case $a_1 > a_2 > t \ge a_3, a_4$. Then $a_3+a_4 = -(a_1+a_2)$, $a_3^2+a_4^2= 1 -a_1^2-a_2^2$ implies
$a_{3,4} = -\frac{a_1+a_2} 2 \pm \frac 1 2 \sqrt{2 (1 -a_1^2-a_2^2)-(a_1+a_2)^2}$, where $2 (1 -a_1^2-a_2^2)-(a_1+a_2)^2 \ge 0$ requires
$a_1 \le \frac 1 3 \sqrt{6-8 a_2^2}- \frac{a_2} 3 =: \psi(a_2) \le \sqrt{\frac 2 3}$. We also have $t < a_2 \le \frac 1 2 \ ( = \sqrt{\frac{n-1}{2(n+1)}} )$, as shown in Lemma \ref{lem1}. Therefore $t < a_2 \le \frac 1 2$, $a_1 \le \sqrt{\frac 2 3}$. Proposition \ref{prop2.1} yields with $(a_1-a_3)(a_1-a_4) = 3 a_1^2 + 2a_1 a_2 + a_2^2 - \frac 1 2$, $(a_2-a_3)(a_2-a_4) = a_1^2 + 2a_1 a_2 + 3 a_2^2 - \frac 1 2$ that
\begin{equation}\label{eq5.1}
A(a,t) = f(a_1,a_2,t) := \frac 1 {a_1-a_2} \Big( \frac{(a_1-t)^2}{3a_1^2+2a_1a_2+a_2^2- \frac 1 2} - \frac{(a_2-t)^2}{a_1^2+2a_1a_2+3a_2^2- \frac 1 2} \Big) \ .
\end{equation}
The critical points of $A(\cdot,t)$ satisfy $\frac{\partial f}{\partial a_1} = \frac{\partial f}{\partial a_2} = 0$. In particular, \\
$0 = (\frac{\partial f}{\partial a_1} - \frac{\partial f}{\partial a_2}) (a_1,a_2,t) =
\frac{(a_1-a_2) [4t(a_1+a_2)-(1-2a_1^2-2a_2^2)] [t(1-2a_1^2-2a_2^2)+2(a_1+a_2)^3-(a_1+a_2)]} {(3a_1^2+2a_1a_2+a_2^2- \frac 1 2)(a_1^2+2a_1a_2+3a_2^2- \frac 1 2)} \ . $
Therefore either $a_1=a_2$ (in the limit $a_2 \to a_1$) or
\begin{equation}\label{eq5.2}
t = \frac{1-2 (a_1+a_2)^2 + 4a_1a_2}{4(a_1+a_2)} \text{   or   }  t = \frac{(a_1+a_2)(1-2(a_1+a_2)^2)}{1- 2(a_1+a_2)^2 + 4a_1a_2} \ .
\end{equation}

First, suppose that $t \ge \frac 1 {2 \sqrt 3}$. Then there is no solution $(a_1,a_2)$ of \eqref{eq5.2}, since for $\frac 1 {2 \sqrt 3} \le t < a_2 < a_1 \le \sqrt{\frac 2 3}$ we have in the first case $\frac{1-2 (a_1+a_2)^2 + 4a_1a_2}{4(a_1+a_2)} < \frac 1 {2 \sqrt 3}$ because  \\
$a_1^2+a_2^2 + \frac{a_1+a_2}{\sqrt 3} - \frac 1 2 = (a_2-\frac 1 {2 \sqrt 3})(2a_2+\sqrt 3) + (2a_2+ \frac 1 {\sqrt 3})(a_1-a_2) + (a_1-a_2)^2 >0$
and in the second case also $\frac{(a_1+a_2)(1-2(a_1+a_2)^2)}{1- 2(a_1+a_2)^2 + 4a_1a_2} < \frac 1 {2 \sqrt 3}$, if $2a_1^2+2a_2^2 < 1$, with equality for $a_1=a_2 = \frac 1 {2 \sqrt 3}$, which is not attained since $a_1>a_2$. If $2a_1^2+2a_2^2 > 1$, the left side is $>3$ and thus cannot be equal to $t \le \sqrt{\frac 2 3}$, either. \\
Hence for $t \ge \frac 1 {2 \sqrt 3}$ the only critical points of $A(\cdot,t)$ occur for $a_1=a_2$ (limiting case). Then $a_1 = a_2 \le \frac 1 2$ and by l'Hospital's rule
\begin{equation}\label{eq5.3}
\lim_{a_2 \nearrow a_1} f(a_1,a_2,t) = \frac 4 {(12a_1^2-1)^2} \left(a_1 (8a_1^2-1) + t (1-4a_1^2) - 4t^2 a_1 \right) =: F(a_1,t) \ .
\end{equation}
For $a_1 = \frac 1 2$ we get $F(\frac 1 2,t) = \frac 1 2 - 2 t^2$, with $a = a^{(2)} = \frac 1 2(1,1,-1,-1)$. The critical points of F satisfy
$$\frac{\partial F}{\partial a_1} = \frac 4 {(12a_1^2-1)^3} \left[ 4(1+36a_1^2) t^2 -8a_1(5-12a_1^2) t + (1+12 a_1^2-96 a_1^4) \right] = 0$$
with solutions
\begin{equation}\label{eq5.4}
t_{\pm} = t_{\pm}(a_1) = \frac 1 {1+36a_1^2} \left[ a_1 (5 - 12 a_1^2) \pm \frac 1 2 (12 a_1^2-1) \sqrt{28a_1^2-1} \right] \ .
\end{equation}
Both functions $t_+$ and $t_-$ are bijective in the following ranges, $t_+ : [\frac 1 {2 \sqrt 3}, \frac 1 2] \to [\frac 1 {2 \sqrt 3}, \frac{\sqrt 6 + 1}{10}]$,
$t_- : [\frac 1 {2 \sqrt 3}, \frac 1 2] \to [-\frac{\sqrt 6 - 1}{10},\frac 1 {2 \sqrt 3}]$, where $t_+$ is increasing and $t_-$ decreasing. Here
$\frac{\sqrt 6 + 1}{10} \simeq 0.3449$, $-\frac{\sqrt 6 - 1}{10} \simeq -0.1449$. Then $t_{\pm}'(a_1)$ has the same sign as
$\pm a_1 (3024 a_1^4+ 504 a_1^2-31)-(216a_1^4+108 a_1^2-5/2) \sqrt{28 a_1^2-1}$. For $t_+$ this is non-negative and zero in $a_1= \frac 1 {2 \sqrt 3}$.
For $t > \frac 1 {2 \sqrt 3}$, \eqref{eq5.4} admits a solution $a_1$ only for $t_+= t_+(a_1)$. Moreover $t \le t_1 := \frac{\sqrt 6 + 1}{10}$ is required to have a solution. Hence the solution interval for $t = t_+(a_1)$ is $ t \in [\frac 1 {2 \sqrt 3},\frac{\sqrt 6 + 1}{10}]$, with $a_1 \in [\frac 1 {2 \sqrt 3},\frac 1 2]$. \\
For $t > t_1$, $\frac{\partial F}{\partial a_1} > 0$ and $F$ is increasing, with maximum in $a_1 = \frac 1 2$, i.e. for $a^{(2)}$: The absolute maximum of $A(\cdot,t)$ is then in $a^{(1)}$ or in $a^{(2)}$. Define $t_0 := \frac{9 + 4 \sqrt{16-6 \sqrt 3} }{2(3 \sqrt 3 + 16)} \simeq 0.4357$. For $t >t_0$, the maximum of $A(\cdot,t)$ is in $a^{(1)}$, for $t < t_0$, in $a^{(2)}$. \\
For $\frac 1 {2 \sqrt 3} < t < t_1$, solve $t = t_+(\bar{a_1})$ with $\bar{a_1} \in (\frac 1 {2 \sqrt 3},\frac 1 2)$. Then $\frac{\partial F}{\partial a_1}$ is positive for $\frac 1 {2 \sqrt 3} < a_1 < \bar{a_1}$ and negative for $\bar{a_1} < a_1 < \frac 1 2$. Hence $F(\cdot,t)$ attains its maximum in $\bar{a_1}$, with a larger value than in $a_1 = \frac 1 2$, i.e. larger than for $a^{(2)}$. This yields the maximum $a^{\{t\}}$ of $A(\cdot,t)$ given in Proposition \ref{prop3}. \\
For $t \searrow \frac 1 {2 \sqrt 3}$ we have $a_1 \searrow \frac 1 {2 \sqrt 3}$ and $F(a_1,t) \to \frac 5 9 \frac {\sqrt 3}  2 = \frac 5 9 A(a^{(1)},t)$. \\

ii) Secondly, we suppose that $0 \le t < \frac 1 {2 \sqrt 3}$ in the case $a_1>a_2>t \ge a_3,a_4$. Generally $a \in S^3$, $a_1 >0$, $\sum_{j=1}^4 a_j =0$ requires $a_1 \ge \frac 1 {2 \sqrt 3}$. Then $t_+(a_1) \ge \frac 1 {2 \sqrt 3}$, so that $t_+(a_1) = t$ is impossible. However, $t_-(a_1) = t$ has a solution as long as $t_-(a_1) \ge 0$. This means $a_1 \le \frac 1 4 \sqrt{1+ \sqrt {\frac {11} 3}} =: \tilde{a_1} \simeq 0.4268$. \\
But first, we consider the case $a_1 = a_2 \le \frac 1 2$. Then $t \ge a_3 = -a_1+\frac 1 2 \sqrt{2-8 a_1^2}$ is required. For $a_1 \in [\frac 1 {2 \sqrt 3}, \frac 1 2]$, $t_-(a_1) \ge -a_1 + \frac 1 2 \sqrt{2 - 8 a_1^2}$ is satisfied: The difference is zero in $a_1 = \frac 1 {2 \sqrt 3}$ and increasing in $a_1$. We have for the function $F$  given in \eqref{eq5.3} that
\begin{equation}\label{eq5.5}
F(a_1,t_-(a_1)) = \frac 4 {(1+36a_1^2)^2} \left[ a_1(5+52a_1^2) + (2 a_1^2+ \frac 1 2) \sqrt{28a_1^2-1} \right] \ .
\end{equation}
In part a) we had for $a^{[t]}$ that $A(a^{[t]},t) = \frac 1 {\sqrt{2-8 t^2}} =: \Phi(t)$ and then
$$\Phi(t_-(a_1)) = \frac {1+36a_1^2}{ 2 \sqrt{(1+4a_1^2)(1+36a_1^2)-2a_1(12a_1^2-1)(12a_1^2-5)(8a_1+\sqrt{28a_1^2-1}) } } \ . $$
One can check that $\Phi(t_-(a_1) > F(a_1,t_-(a_1))$ for all $a_1 \in (\frac 1 {2 \sqrt 3},\tilde{a_1}]$, with equality for $a_1 = \frac 1 {2 \sqrt 3}$. Therefore $a^{[t]}$ is also the maximum of $A(\cdot,t)$ in this case. \\

Next, we study the cases $t = \frac{1-2 (a_1+a_2)^2 + 4a_1a_2}{4(a_1+a_2)}$ and $t = \frac{(a_1+a_2)(1-2(a_1+a_2)^2)}{1- 2(a_1+a_2)^2 + 4a_1a_2}$ of \eqref{eq5.2} with $0 \le t = t_-(a_1) < \frac 1 {2 \sqrt 3}$. \\
We claim that the second case is impossible, since $a_3 \le t$ would not be satisfied: Let $x := a_1+a_2$, $y := 4a_1a_2$. Then $a_3 = -\frac x 2 + \frac 1 2 \sqrt{2-3 x^2 +y}$ and $t = \frac{x(1-2 x^2)}{1-2 x^2 +y}$, with $\frac 1 {2 \sqrt 3} \le x \le \frac 1 {\sqrt 2}$ and $0 \le y \le x^2 \le \frac 1 2$. Assume $a_3 \le t$. We show the contradiction $a_3 > t$ which is equivalent to
\begin{equation}\label{eq5.6}
(1-2 x^2 + y) (\sqrt{2 - 3 x^2 + y} -x) > 2 x (1 - 2 x^2) \ .
\end{equation}
We claim that $y > 2 x^2 + 2 x \sqrt{1 - 2 x^2} -1$. This will imply \eqref{eq5.6}: Since the left side of \eqref{eq5.6} is strictly increasing in $y$, \eqref{eq5.6} follows from  \\
$2 x \sqrt{1-2 x^2} (\sqrt{1-x^2+2x \sqrt{1-2x^2}}-x) \ge 2 x (1-2x^2)$ which is true with equality sign since
$(x+\sqrt{1-2x^2})^2 = 1-x^2+2x \sqrt{1-2x^2}$. \\
Now $y > 2 x^2 + 2 x \sqrt{1 - 2 x^2} -1$ is equivalent to
\begin{align*}
\left(1-2(a_1^2+a_2^2) \right)^2 & -4(a_1+a_2)^2 \left(1-2(a_1+a_2)^2 \right) \\
& = \left(6a_1^2-1+4a_1a_2+2a_2^2 \right) \left(2a_1^2-1+4a_1a_2+6a_2^2 \right) >0 \ ,
\end{align*}
which is true since $a_1 > a_2 > -\frac{a_1} 3 + \frac 1 6 \sqrt{6-8a_1^2}$ is satisfied: the assumption $a_2 > t \ge a_3 = -\frac{a_1+a_2} 2 + \frac 1 2 \sqrt{2 - 3(a_1^2+a_2^2)-(a_1+a_2)^2}$ yields $a_2 > -\frac{a_1} 3 + \frac 1 6 \sqrt{6-8a_1^2}$. Therefore the claim for $y$ is true and the contradiction $a_3 > t$ would follow. \\
This leaves the case $t = \frac{1-2 (a_1+a_2)^2 + 4a_1a_2}{4(a_1+a_2)}$. Calculation using \eqref{eq5.1} shows that then $A(a,t) = \frac 1 {2(a_1+a_2)}$. This is smaller than $A(a^{[t]},t) = \frac 1 {\sqrt{2-8 t^2}}$, since $2 - 8 t^2 < 4 (a_1+a_2)^2$ is satisfied in view of
$\left(1-2(a_1^2+a_2^2) \right)^2 - 4(a_1+a_2)^2 \left(1-2(a_1+a_2)^2 \right) > 0$, as we just showed. \\

iii) This leaves only the minimum case for $0 \le t < \frac 1 {2 \sqrt 3}$. For $a_1 = a_2$ with $\frac 1 {2 \sqrt 3} \le a_1=a_2 \le \frac 1 2$, we have by \eqref{eq5.5} for $t = t_-(a_1)$
$$F(a_1,t_-(a_1)) = \frac 4 {(1+36a_1^2)^2} \left[ a_1(5+52a_1^2) + (2 a_1^2+ \frac 1 2) \sqrt{28a_1^2-1} \right] \ . $$
This is decreasing in $a_1$. On the other hand, $A(a^{(1)},t) = \left(\frac{\sqrt 3} 2 \right)^3 \left(\frac{\sqrt 3} 2 - t_-(a_1) \right)^2$ is increasing in $a_1$. For the maximal possible $a_1 \le \tilde{a_1} \simeq 0.4268$ (zero of $t_-$) we have $t_-(\tilde{a_1}) = 0$ and $A(a^{(1)},0) = \frac 9 {32} \sqrt 3 \simeq 0.4871 < F(a_1,0) \simeq 0.5551$. Hence $A(a^{(1)},t) < F(a_1,t)$ for all $0 \le t < \frac 1 {2 \sqrt 3}$, and $a^{(1)}$ is the minimum of $A(\cdot,t)$. \\
In the second case $t = t(a_1,a_2) = \frac{1-2 (a_1+a_2)^2 + 4a_1a_2}{4(a_1+a_2)}$, as above $A(a,t) = \frac 1 {2(a_1+a_2)}$. This is larger than
$A(a^{(1)},t) = \left(\frac{\sqrt 3} 2 \right)^3 \left(\frac{\sqrt 3} 2 -t(a_1,a_2) \right)^2$: Using $a_2 \ge \frac 1 6 \sqrt{6-8a_1^2}- \frac{a_1} 3$, $a_2 > 0$, the difference
$A(a,t)-A(a^{(1)},t)$ is minimal for $a_1=a_2= \tilde{a_1}$ which then is $0.0986 > 0$. This case does not lead to the minimum of $A(\cdot,t)$.  \hfill $\Box$

\vspace{0,5cm}

\end{document}